\documentclass[11pt, reqno]{amsart}
\usepackage{amsmath,amssymb,amsthm}
\usepackage{txfonts,mathrsfs}
\usepackage[utf8]{inputenc}
\usepackage[english]{babel}
\usepackage{enumerate}
\usepackage{graphicx}
\usepackage{color}
\usepackage{tikz-cd}
\usepackage{bbm}
\usepackage{csquotes}

\usepackage[backend=bibtex,giveninits=true,maxbibnames=99, doi=false,isbn=false,url=false,eprint=true]{biblatex}

\bibliography{refs}

\usepackage[hidelinks,
  bookmarks=true,
  bookmarksopen=true,
  bookmarksnumbered=true,
  pdfauthor={Giuliano Basso},
  pdftitle={Absolute Lipschitz extendability and linear projection constants},
  pdfsubject={Paper},
  pdfstartview=FitH,
  colorlinks=false,
  pdfencoding=unicode
]{hyperref}
%%%%%%%%%%%%%%%%%%%%%%%%%%%%%%%%%%%%%%%%

\urlstyle{rm}

\sloppy

\numberwithin{equation}{section}

\newtheorem{theorem}{Theorem}[section]
\newtheorem{proposition}[theorem]{Proposition}
\newtheorem{lemma}[theorem]{Lemma}

\theoremstyle{definition}
\newtheorem{remark}[theorem]{Remark}
\newtheorem{definition}[theorem]{Definition}

\newtheoremstyle{customNumber}
     {}          % Space above (empty = default)
     {}          % Space below
     {\itshape}  % Body font
     {}          % Indent amount (empty = no indent)
     {\bfseries} % Thm head font
     {.}         % Punctuation after thm head
     { }         % Space after thm head (\newline = linebreak)
     {\thmname{#1}\thmnumber{ #2}\thmnote{ #3}}
                 % name = Theorem supplied four lines below
                 % number suppressed three lines below
                 % the optional argumen
\theoremstyle{customNumber}

\renewcommand{\phi}{\varphi}

\DeclareMathOperator{\R}{\mathbb{R}}
\DeclareMathOperator{\N}{\mathbb{N}}
\DeclareMathOperator{\Z}{\mathbb{Z}}
\newcommand*\norm[1]{\lVert#1\rVert}

\newcommand\abs[1]{\lvert#1\rvert}

\DeclareMathOperator{\Lip}{Lip}

\DeclareMathOperator{\Tr}{tr}

\DeclareMathOperator{\fin}{fin}
\DeclareMathOperator{\absolutE}{\text{\normalfont \ae}}
\DeclareMathOperator{\diam}{diam}
\DeclareMathOperator{\sep}{sep}
\DeclareMathOperator{\spann}{span}

\DeclareMathOperator{\ext}{ext}
\DeclareMathOperator{\dual}{dual}
\DeclareMathOperator{\id}{id}
\DeclareMathOperator{\sgn}{sgn}

\DeclareMathAlphabet{\mathcal}{OMS}{cmsy}{m}{n}

\begin{document}

\title{Absolute Lipschitz extendability and linear projection constants}

\author{Giuliano Basso}
\address{Department of Mathematics\\ University of Fribourg\\ Chemin du Mus\'ee 23\\ CH-1700 Fribourg \\ Switzerland}
\email{giuliano.basso@unifr.ch}

\keywords{Lipschitz extension, projection constant, linear programming}

\subjclass[2020]{Primary 54C20; Secondary 46B20 and 46E99}

\thanks{Research supported by Swiss National Science Foundation Grant no. 182423}

\begin{abstract}
We prove that the absolute extendability constant of a finite metric space may be determined by computing relative projection constants of certain Lipschitz-free spaces. As an application, we show that \(\mbox{ae}(3)=4/3\) and \(\absolutE(4)\geq (5+4\sqrt{2})/7\). Moreover, we discuss how to compute relative projection constants by solving linear programming problems.
\end{abstract}

\maketitle

\section{Introduction}

\subsection{Background}

In the setting of non-linear geometry of Banach spaces one considers Banach spaces as metric spaces and studies non-linear analogues of linear notions. Sometimes it turns out that such a non-linear analogue is completely recoverable from its linear counterpart.  For example, a fundamental result due to Lindenstrauss (see \cite[Theorem 5]{lindenstrauss1964}) states that
a dual Banach space is a \(C\)-absolute Lipschitz retract if and only if it is a \(C\)-absolute \textit{linear} retract. Another result in this direction concerning simultaneous Lipschitz extensions has been obtained by Brudnyi and Brudnyi in \cite{MR2340707}.
Their result is restated as Theorem~\ref{thm:BB} below. In this paper, we obtain a slight generalization of Brudnyi and Brudnyi's result and use it to relate the absolute extendability constant \(\absolutE(X)\) of a finite metric space \(X\) to relative projection constants \(\lambda(\mathcal{F}(X), \mathcal{F}(Y))\). Here, \(Y\) is a finite metric space containing \(X\) and \(\mathcal{F}(Y)\) denotes the Lipschitz-free space over \(Y\).  As an application, we show that \(\absolutE(3)=4/3\) and the lower bound
\(\absolutE(4)\geq (5+4\sqrt{2})/7\),
which we conjecture to be sharp. 
%The maximal relative projection constant \(\lambda(n,d)\) is defined in \eqref{eq:maximal-relative-proj}.

Before presenting our results in more detail, we first introduce some standard terminology used in the context of quantitative Lipschitz extension problems.
We consider the following diagram:

\begin{center}
\begin{tikzcd}
Y  \arrow[dashrightarrow]{dr} \\
X \arrow[u, hook] \arrow{r}{f} & E.
\end{tikzcd}
\end{center}

Here, \(Y\) is a metric space, \(X\subset Y\) a subset endowed with the induced metric, \(E\) a Banach space over \(\R\) and \(f\colon X\to E\) a Lipschitz map. The infimum of those \(K\geq 0\) for which there exists a \(K\Lip(f)\)-Lipschitz map \(\bar{f}\colon Y\to E\) making the diagram above commutative is denoted by \(e(X,Y,E,f)\). Here, we use the notation \(\Lip(f)\coloneqq \inf\bigl\{ L\geq 0 : \text{ \(f\) is \(L\)-Lipschitz}\bigr\}\). 

There is an abundance of literature discussing `Lipschitz extension problems'. The reader may refer to the monographs \cite{MR2882877, MR2868143, MR3931701} for a recent account of the theory. 
In this paper, we are interested in `Lipschitz extension problems' of the following form:

\begin{itemize}
\item \textit{Trace problems}:
fix \(X\) and \(Y\), and vary everything else: 
\[\nu(X,Y)\coloneqq\sup\big\{e(X,Y,E,f) : E, f \textrm{ arbitrary}\big\}.\]
\item \textit{Absolute Lipschitz extendability}: fix \(X\) and vary everything else:
\[\absolutE(X)\coloneqq\sup\big\{ \nu(X,Y) : Y \textrm{ arbitrary} \big\}.
\]
\end{itemize} 
The absolute extendability constant \(\absolutE(X)\) is finite for a wide variety of metric spaces (see \cite[Corollary 5.2]{MR2834732} due to Naor and Silberman). For example, Lee and Naor (see \cite[Theorem 1.6]{lee2005extending}) proved that there is a universal constant \(C>0\) such that if \(X\) is doubling with doubling constant \(N_X\), then \(\absolutE(X) \leq C \log(N_X)\).
In particular, by setting 
\[
\absolutE(n)\coloneqq\sup\big\{\absolutE(X) : \abs{X}=n \big\},
\]
one has \(\absolutE(n) \leq C \log(n)\). Naor and Rabani \cite[Theorem 1]{naor2017lipschitz} (lower bound), and Lee and Naor \cite[Theorem 1.10]{lee2005extending} (upper bound), improved this estimate by showing that there are constants \(c\), \(C>0\) such that for every \(n\geq 3\),
\begin{equation}\label{eq:NaorLeee}
c \sqrt{\log(n)} \leq \absolutE(n) \leq C \frac{\log(n)}{\log(\log(n))}. 
\end{equation}
These are the best known bounds of \(\absolutE(\cdot)\). In contrast to these strong asymptotic estimates,
up to the author's knowledge, the only known exact
values of \(\absolutE(\cdot)\) are 
\(\absolutE(1) = 0 \text{ and } \absolutE(2) = 1\). Theorem \ref{thm:main2} below yields a formula of \(\textrm{\ae}(n)\) involving only \textit{linear} Lipschitz extension moduli. Using that the exact values of some of these linear Lipschitz extension moduli are known, we can add \(\textrm{\ae}(3)=\tfrac{4}{3}\) to the sequence above; see Proposition~\ref{prop:ae3} below.
 
For the remainder of this subsection, let us briefly discuss the quantity \(\nu(X,Y)\) and its relation to simultaneous Lipschitz extension. Let \((X, p)\) be a pointed metric space and denote by \(\Lip_0(X)\) the real Banach space of all real-valued Lipschitz functions on \(X\) with \(f(p)=0\) equipped with the norm
\[
\abs{f}_{\Lip}\coloneqq \sup_{x\neq x'} \frac{d(f(x), f(x'))}{d(x,x')}.
\]
We refer to Weaver's book  \cite{doi:10.1142/9911} for a survey on  \(\Lip_0(X)\). A \textit{linear extension operator} for \(X\subset Y\) is a bounded linear map \(T\colon \Lip_0(X)\to \Lip_0(Y)\) such that \(T f|_X=f\) for all \(f\in \Lip_0(X)\). We use the notation:
\[
\lambda_{\Lip}(X,Y)\coloneqq \inf\bigl\{\norm{T} \,:\, T \text{ is a linear extension operator for \(X\subset Y\)}\bigr\}.
\]
and
\[
\lambda_{\Lip}(Y) \coloneqq \sup\big\{ \lambda_{\Lip}(X,Y) \,:\, X\subset Y\big\}.
\]
Surprisingly, due to a result of Brudnyi and Brudnyi (see \cite[Theorem 1.2]{MR2340707}), there is a formula for \(\lambda_{\Lip}(Y)\) using only non-linear Lipschitz extension constants \(\nu_{\fin}(X,Y)\). 
By putting
\[
\nu_{\mathcal{B}}(X,Y)\coloneqq \sup\big\{ e(X,Y,E,f)\, : E\in \mathcal{B}, f\colon X\to E \text{ Lipschitz map} \big\},
\]
where \(\mathcal{B}\) is a class of Banach spaces, and setting 
\[
\nu_{\fin}(Y)\coloneqq\sup_{X\subset Y} \nu_{\mathcal{B}_{\fin}}(X,Y),
\]
where \(\mathcal{B}_{\fin}\) denotes the class of all finite-dimensional Banach spaces,
their result can be stated as follows:
\begin{theorem}[Brudnyi and Brudnyi \cite{MR2340707}]\label{thm:BB}
For every metric space \(Y\) the following identity is true:
\[
\lambda_{\Lip}(Y)=\nu_{\fin}(Y).
\]
\end{theorem}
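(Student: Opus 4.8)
The plan is to establish, for each fixed subset \(X\subset Y\) carrying the inherited basepoint \(p\), the sharper pointwise identity \(\lambda_{\Lip}(X,Y)=\nu_{\mathcal{B}_{\fin}}(X,Y)\); taking the supremum over all \(X\subset Y\) then yields the theorem. I would prove this identity as two inequalities.

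For \(\nu_{\mathcal{B}_{\fin}}(X,Y)\leq \lambda_{\Lip}(X,Y)\), fix a finite-dimensional Banach space \(E\), a Lipschitz map \(f\colon X\to E\) (translated so that \(f(p)=0\)), and a linear extension operator \(T\colon \Lip_0(X)\to\Lip_0(Y)\). Choose a basis \(e_1,\dots,e_n\) of \(E\) with dual basis \(\xi_1,\dots,\xi_n\) and set \(\bar{f}(y)\coloneqq\sum_{i=1}^n T(\xi_i\circ f)(y)\,e_i\). Each \(\xi_i\circ f\in\Lip_0(X)\), so \(\bar f\) is well defined, and \((Tg)|_X=g\) gives \(\bar f|_X=f\). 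The crucial observation is that for every \(\xi\in E^*\) one has \(\xi\circ\bar f=T(\xi\circ f)\), by linearity of \(T\) and expansion in the dual basis; hence
\[
\norm{\bar f(y)-\bar f(y')}_E=\sup_{\norm{\xi}\leq 1}\abs{T(\xi\circ f)(y)-T(\xi\circ f)(y')}\leq \norm{T}\,\Lip(f)\,d(y,y'),
\]
using \(\abs{\xi\circ f}_{\Lip}\leq\Lip(f)\) for \(\norm{\xi}\leq 1\). Thus \(e(X,Y,E,f)\leq\norm{T}\), and taking suprema over \(E\in\mathcal{B}_{\fin}\) and \(f\), then the infimum over \(T\), gives the inequality.

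For the reverse inequality, write \(K\coloneqq\nu_{\mathcal{B}_{\fin}}(X,Y)\). The idea is to build \(T\) from extensions of the \emph{evaluation embeddings}. Given a finite-dimensional subspace \(G\subset\Lip_0(X)\), define \(\phi_G\colon X\to G^*\) by \(\phi_G(x)(g)\coloneqq g(x)\); then \(\phi_G(p)=0\) and \(\Lip(\phi_G)\leq 1\), since \(\norm{\phi_G(x)-\phi_G(x')}_{G^*}=\sup_{g\in G,\,\abs{g}_{\Lip}\leq 1}\abs{g(x)-g(x')}\leq d(x,x')\). As \(G^*\in\mathcal{B}_{\fin}\), for every \(\epsilon>0\) there is an extension \(\bar\phi_G\colon Y\to G^*\) of \(\phi_G\) with \(\Lip(\bar\phi_G)\leq K+\epsilon\). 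Dualizing, define \(T_G\colon G\to\Lip_0(Y)\) by \((T_G g)(y)\coloneqq\langle\bar\phi_G(y),g\rangle\). One checks directly that \(T_G g|_X=g\), that \(T_G g\in\Lip_0(Y)\), and that \(\norm{T_G}\leq\Lip(\bar\phi_G)\leq K+\epsilon\); so each finite-dimensional \(G\) carries a near-optimal extension operator, with the same \(\epsilon\) usable uniformly in \(G\).

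The main work is to amalgamate the operators \(T_G\) into a single \(T\colon\Lip_0(X)\to\Lip_0(Y)\) of norm at most \(K+\epsilon\). Here I would exploit that \(\Lip_0(Y)=\mathcal{F}(Y)^*\) is a dual space: fixing an ultrafilter \(\mathcal{U}\) on the directed set of finite-dimensional subspaces of \(\Lip_0(X)\) that refines the filter of terminal segments, set \(Tg\coloneqq\text{weak}^*\text{-}\lim_{\mathcal{U}}T_G g\), which exists because each bounded net lies in a weak\(^*\)-compact ball by Banach--Alaoglu. Linearity of \(T\) follows from the linearity of each \(T_G\) and the weak\(^*\)-continuity of the vector-space operations; the bound \(\norm{T}\leq K+\epsilon\) follows from weak\(^*\)-lower semicontinuity of the norm; and the extension property \((Tg)(x)=g(x)\) follows because point evaluations \(h\mapsto h(x)\) are given by pairing with the molecules \(\delta_x\in\mathcal{F}(Y)\) and are therefore weak\(^*\)-continuous, so \((Tg)(x)=\lim_{\mathcal{U}}(T_G g)(x)=g(x)\). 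Letting \(\epsilon\to 0\) gives \(\lambda_{\Lip}(X,Y)\leq K\). The principal obstacle is precisely this limiting step: ensuring that the ultrafilter limit is simultaneously linear, norm-controlled, and genuinely extends \(g\), for which the duality \(\Lip_0(Y)=\mathcal{F}(Y)^*\) together with the weak\(^*\)-continuity of point evaluations is exactly what is needed.
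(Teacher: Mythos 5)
Your proof is correct, and it takes a genuinely different route from the paper's. The paper never establishes the pointwise identity \(\lambda_{\Lip}(X,Y)=\nu_{\fin}(X,Y)\) that you prove for every subset \(X\subset Y\); instead it deduces the theorem from Theorem~\ref{thm:main1}, whose proof decomposes the problem on the \emph{metric-space} side: Ball's weak\(^\ast\)-compactness argument (Lemma~\ref{lem:reduction-to-finite-sets}) glues extensions defined on finite subsets \(F\subset Y\) at the cost of passing through the bidual \(E^{\ast\ast}\), Lemma~\ref{lem:finishLine} identifies \(\nu(F,Y)=\lambda_{\Lip}(F,Y)=\lambda(\mathcal{F}(F),\mathcal{F}(Y))\) for finite \(F\), and the class \(\mathcal{B}_{\dual}\) of dual spaces serves as an intermediary precisely because the paper's lower bound \eqref{eq:lower-111} extends the single evaluation map \(\delta\colon X\to\Lip_0(X)^\ast\), whose target is finite-dimensional only when \(X\) is finite. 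You instead decompose on the \emph{function-space} side: you extend the evaluation maps \(\phi_G\colon X\to G^\ast\) only for finite-dimensional subspaces \(G\subset\Lip_0(X)\), so the targets stay in \(\mathcal{B}_{\fin}\) even when \(X\) is infinite, and you reassemble a single operator by a weak\(^\ast\) ultrafilter limit in \(\Lip_0(Y)=\mathcal{F}(Y)^\ast\). Both proofs hinge on weak\(^\ast\) compactness, but applied to different objects: the paper applies it to products of balls of \(E^{\ast\ast}\) indexed by points of \(Y\), while you apply it to balls of \(\Lip_0(Y)\) along the ultrafilter on the directed family of subspaces \(G\). The delicate steps in your argument all check out: the dual-basis identity \(\xi\circ\bar f=T(\xi\circ f)\) in the easy direction, the uniform bound \(\norm{T_G}\leq\Lip(\bar\phi_G)\leq K+\epsilon\), and the weak\(^\ast\)-continuity of point evaluations (pairing with \(\delta(x)\in\mathcal{F}(Y)\)), which is exactly what makes the limit operator linear, norm-controlled, and still an extension. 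What your route buys is a formally sharper, pointwise statement with no detour through \(\nu_{\dual}\) or biduals; what the paper's route buys are the by-products it needs later, namely the identity \(\lambda_{\Lip}(Y)=\nu_{\dual}(Y)\), the finiteness reduction \(\lambda_{\Lip}(Y)=\sup_{F}\lambda_{\Lip}(F,Y)\) of Remark~\ref{rem:bb}, and the free-space formulation \eqref{eq:dictionary} that feeds directly into Theorem~\ref{thm:main2} and the computations of \(\absolutE(3)\) and \(\absolutE(4)\).
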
 
 Lower and upper bounds of \(\lambda_{\Lip}(Y)\) for many interesting classes of metric spaces, such as  Gromov-hyperbolic groups, \(\R\)-tress, certain Riemannian manifolds, and classical Banach spaces have been obtained by Brudnyi and Brudnyi \cite{MR2288741} and Naor \cite{MR3627773}. Hence, by Theorem~\ref{thm:BB}, the quantity \(\nu_{\fin}(Y)\) can be estimated
for members of any such family of metric spaces.

\subsection{Main results}
Our first result is the following variant of Theorem~\ref{thm:BB}:

\begin{theorem}\label{thm:main1}
Let \(X\subset Y\) denote metric spaces and \(\mathcal{B}\) a class of Banach spaces. We define \(K_{\mathcal{B}}\coloneqq\sup\{ \nu(E, E^{\ast\ast}) : E\in \mathcal{B}\}\). 
Then 
\begin{equation}\label{eq:upper1}
 \nu_{\mathcal{B}}(X,Y) \leq K_{\mathcal{B}} \cdot \sup_{F} \lambda_{\Lip}(F, Y),
\end{equation}
where the supremum is taken over all finite subsets \(F\subset X\). Moreover, if \(\Lip_0(X)^\ast\) is contained in \(\mathcal{B}\), then 
\begin{equation}\label{eq:lower-111}
\lambda_{\Lip}(X,Y) \leq \nu_{\mathcal{B}}(X,Y).
\end{equation}
As a consequence of \eqref{eq:upper1} and \eqref{eq:lower-111}, 
\begin{equation}\label{eq:fin-dual}
\lambda_{\Lip}(Y)=\nu_{\dual}(Y)=\nu_{\fin}(Y).
\end{equation}
\end{theorem}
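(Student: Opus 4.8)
The plan is to prove the two displayed inequalities \eqref{eq:upper1} and \eqref{eq:lower-111} separately and then read off \eqref{eq:fin-dual} by specializing \(\mathcal{B}\) to the class \(\mathcal{B}_{\dual}\) of dual spaces and to \(\mathcal{B}_{\fin}\), for both of which the constant \(K_{\mathcal{B}}\) collapses to \(1\). Throughout I fix a common base point \(p\in X\subset Y\) and, by translating, assume all maps send \(p\) to \(0\). The recurring device is the isometric embedding \(\delta\colon X\to \Lip_0(X)^\ast\), \(\delta_x(f)=f(x)\), together with its transpose: a linear extension operator and a vector-valued Lipschitz extension are two sides of the same coin.

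For the lower bound \eqref{eq:lower-111} I would take \(E=\Lip_0(X)^\ast\), which lies in \(\mathcal{B}\) by hypothesis, and \(f=\delta\), which is \(1\)-Lipschitz since \(\norm{\delta_x-\delta_{x'}}=d(x,x')\). Given any extension \(\bar f\colon Y\to \Lip_0(X)^\ast\) with \(\Lip(\bar f)\le K\), define \(T\colon\Lip_0(X)\to\Lip_0(Y)\) by \((Tg)(y):=\langle\bar f(y),g\rangle\). Linearity in \(g\) is immediate; the identities \(\bar f(x)=\delta_x\) for \(x\in X\) give \((Tg)|_X=g\) and \((Tg)(p)=g(p)=0\); and \(\abs{(Tg)(y)-(Tg)(y')}\le\norm{\bar f(y)-\bar f(y')}\,\lipN{g}\le K\,d(y,y')\,\lipN{g}\) shows \(\norm{T}\le K\). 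Taking the infimum over admissible \(K\) yields \(\lambda_{\Lip}(X,Y)\le e(X,Y,\Lip_0(X)^\ast,\delta)\le\nu_{\mathcal{B}}(X,Y)\).

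For the upper bound \eqref{eq:upper1}, fix \(E\in\mathcal{B}\) and a \(1\)-Lipschitz \(f\colon X\to E\). For a finite \(F\subset X\) with \(p\in F\), write \(g=f|_F\) and pick a near-optimal linear extension operator \(T_F\colon\Lip_0(F)\to\Lip_0(Y)\) with \(\norm{T_F}\le\lambda_{\Lip}(F,Y)+\varepsilon\). Transposing, each \(\phi\in E^\ast\) gives \(\phi\circ g\in\Lip_0(F)\) with \(\lipN{\phi\circ g}\le\norm\phi\), and \(\phi\mapsto (T_F(\phi\circ g))(y)\) is a bounded linear functional on \(E^\ast\), i.e. an element \(\tilde f_F(y)\in E^{\ast\ast}\); one checks that \(\tilde f_F\) extends \(g\) into \(E\subset E^{\ast\ast}\) and that \(\Lip(\tilde f_F)\le\norm{T_F}\). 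Two issues remain: \(\tilde f_F\) extends \(f\) only on \(F\), and it takes values in \(E^{\ast\ast}\). For the first, each \(\tilde f_F(y)\) lies in a fixed ball of \(E^{\ast\ast}\), so fixing an ultrafilter \(\mathcal{U}\) on the finite subsets of \(X\) containing each set \(\{F : x\in F\}\), the functionals \(\phi\mapsto\lim_{\mathcal{U}}\langle\tilde f_F(y),\phi\rangle\) define \(\tilde f(y)\in E^{\ast\ast}\); weak-\(\ast\) lower semicontinuity of the norm preserves the bound \(\Lip(\tilde f)\le\sup_F\lambda_{\Lip}(F,Y)+\varepsilon\), and \(\tilde f\) now extends all of \(f\). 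For the second, I invoke \(\nu(E,E^{\ast\ast})\le K_{\mathcal{B}}\): extending \(\id_E\) across \(E\subset E^{\ast\ast}\) produces a Lipschitz retraction \(r\colon E^{\ast\ast}\to E\) with \(\Lip(r)\le K_{\mathcal{B}}\), so \(\bar f:=r\circ\tilde f\colon Y\to E\) extends \(f\) with \(\Lip(\bar f)\le K_{\mathcal{B}}(\sup_F\lambda_{\Lip}(F,Y)+\varepsilon)\). Letting \(\varepsilon\to0\) and taking suprema over \(f\) and \(E\) gives \eqref{eq:upper1}. I expect this finite-to-infinite passage—securing one coherent weak-\(\ast\) limit in \(E^{\ast\ast}\) and checking that composing with \(r\) keeps a genuine norm-Lipschitz map—to be the main technical obstacle; the rest is bookkeeping.

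Finally, for \eqref{eq:fin-dual} I specialize. If \(E\) is a dual space, the adjoint of the canonical embedding furnishes a norm-one linear projection \(E^{\ast\ast}\to E\) fixing \(E\), whence \(\nu(E,E^{\ast\ast})=1\); the same holds trivially when \(E\) is finite-dimensional, as then \(E=E^{\ast\ast}\). Thus \(K_{\mathcal{B}}=1\) for both \(\mathcal{B}=\mathcal{B}_{\dual}\) and \(\mathcal{B}=\mathcal{B}_{\fin}\), and \eqref{eq:upper1} gives \(\nu_{\dual}(Y)\le\sup_F\lambda_{\Lip}(F,Y)\le\lambda_{\Lip}(Y)\), the supremum over finite \(F\subset Y\). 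Conversely \(\Lip_0(X)^\ast\) is a dual space, so \eqref{eq:lower-111} with \(\mathcal{B}=\mathcal{B}_{\dual}\) yields \(\lambda_{\Lip}(X,Y)\le\nu_{\dual}(X,Y)\) and hence \(\lambda_{\Lip}(Y)\le\nu_{\dual}(Y)\); this closes the loop \(\lambda_{\Lip}(Y)=\nu_{\dual}(Y)=\sup_F\lambda_{\Lip}(F,Y)\). For the finite-dimensional class I use that \(\Lip_0(F)^\ast\) is finite-dimensional whenever \(F\) is finite, so \eqref{eq:lower-111} applied to \(F\) gives \(\lambda_{\Lip}(F,Y)\le\nu_{\fin}(F,Y)\le\nu_{\fin}(Y)\), whence \(\sup_F\lambda_{\Lip}(F,Y)\le\nu_{\fin}(Y)\). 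Since also \(\nu_{\fin}(Y)\le\nu_{\dual}(Y)\) because \(\mathcal{B}_{\fin}\subset\mathcal{B}_{\dual}\), combining with the previous chain forces \(\nu_{\fin}(Y)=\sup_F\lambda_{\Lip}(F,Y)=\nu_{\dual}(Y)=\lambda_{\Lip}(Y)\).
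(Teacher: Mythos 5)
Your proof is correct, and two of its three parts track the paper closely: your lower bound \eqref{eq:lower-111} is the paper's argument verbatim in different notation (the paper phrases your operator \((Tg)(y)=\langle\bar f(y),g\rangle\) as the restriction to \(\Lip_0(X)\) of the adjoint of \(\beta_{\bar\delta}\)), and your derivation of \eqref{eq:fin-dual} is exactly the paper's chain of inequalities, including the norm-one projection \(F^{\ast\ast\ast}\to F^\ast\) and the observation that \(\Lip_0(F)^\ast\in\mathcal{B}_{\fin}\) for finite \(F\). Where you genuinely diverge is the upper bound \eqref{eq:upper1}. The paper factors it through two lemmas: Lemma~\ref{lem:reduction-to-finite-sets} (Ball's compactness argument, where the partial maps \(\bar f_F\colon F\to E^{\ast\ast}\) are defined only on finite subsets \(F\subset Y\) and glued by a Tychonoff subnet) and Lemma~\ref{lem:finishLine} (which converts the resulting nonlinear constants, via \(e(F\cap X,F,E^{\ast\ast})\le\nu(F\cap X,Y)\), into \(\lambda_{\Lip}(F\cap X,Y)\) using the universal property of Lipschitz-free spaces). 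You instead inline everything and never touch \(\mathcal{F}(X)\): for each finite \(F\subset X\) you transpose a near-optimal linear extension operator \(T_F\) into a map \(\tilde f_F\colon Y\to E^{\ast\ast}\) that is already defined on all of \(Y\) (but extends only \(f|_F\)), and a single weak-\(\ast\) ultrafilter limit over finite subsets of the \emph{source} \(X\) then upgrades it to an extension of all of \(f\); the retraction \(r\colon E^{\ast\ast}\to E\) plays the same role in both arguments. Your organization is more self-contained for this one theorem, while the paper's modular route pays off later, since Lemma~\ref{lem:finishLine} and the moreover part of Lemma~\ref{lem:reduction-to-finite-sets} are reused to prove Theorem~\ref{thm:main2}. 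One small imprecision: the infimum defining \(\nu(E,E^{\ast\ast})\) need not be attained, so your retraction only satisfies \(\Lip(r)\le K_{\mathcal{B}}+\varepsilon\); this is harmless, since you already let \(\varepsilon\to 0\) at the end.
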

Here, we use the notation \(\nu_{\dual}(Y)\) to denote \(\nu_{\mathcal{B}_{\dual}}(Y)\), where \(\mathcal{B}_{\dual}\) is the class consisting of all dual Banach spaces. Our proof of Theorem~\ref{thm:main1} is a streamlined version of Brudnyi and Brudnyi's proof of Theorem~\ref{thm:BB}. The introduction of \(\nu_{\dual}(Y)\) makes it possible to obtain the identity \(\lambda_{\Lip}(Y)=\nu_{\fin}(Y)\) as a direct consequence of \(\lambda_{\Lip}(Y)=\nu_{\dual}(Y)\); see also Remark~\ref{rem:bb}. Using Theorem~\ref{thm:main1}, we obtain the following estimate:
\begin{equation}\label{eq:ae1}
\absolutE(X)\geq \sup_{Y\supset X} \lambda_{\Lip}(X, Y),
\end{equation}
where the supremum is taken over all metric spaces \(Y\) containing \(X\). For finite metric spaces \(X\), \eqref{eq:ae1} is an equality; see Theorem~\ref{thm:main2}.

The quantity \(\lambda_{\Lip}(X,Y)\) is closely related to the relative projection constant of the Lipschitz-free spaces of \(X\) and \(Y\). To state this relationship precisely we need to recall some concepts from Banach space theory. 
Every point \(x\in X\) induces a linear functional \(\delta(x)\colon \Lip_0(X)\to \R\) via \(\delta(x)(f)\coloneqq f(x)\). One can define the Lipschitz-free space of \(X\) as follows:

\begin{definition}\label{def:lipschitz-free}
Let \((X,p)\) be a pointed metric space. The Lipschitz-free space of \(X\), denoted by \(\mathcal{F}(X)\), is the closure of \(\spann\{\delta(x) : x\in X\}\) in \(\Lip_0(X)^\ast\).
\end{definition}

By construction, \(\mathcal{F}(X)\) is a Banach space over \(\R\). Lipschitz-free spaces (also called Arens-Eells spaces or transportation cost spaces) have been introduced by Arens and Eells in the 1950s (see \cite{MR81458}). The term `Lipschitz-free space' has been coined by Godefroy and Kalton in \cite{MR2030906}. It follows directly from the definition of \(\mathcal{F}(X)\) that the map \(\delta_X\colon X\to \mathcal{F}(X)\) defined by \(x\mapsto \delta(x)\) is an isometric embedding. 
We will often need the following universal property of Lipschitz-free spaces. Whenever \((X, p)\) is a pointed metric space and \(f\colon X\to E\) is a Lipschitz map into a Banach space satisfying \(f(p)=0\), there exists a unique linear map \(\beta_f\colon \mathcal{F}(X)\to E\), such that \(\beta_f\circ \delta_X=f\). Moreover, one has \(\norm{\beta_f}=\Lip(f)\).

 Using this universal property, one can show that whenever \(\iota \colon X \to Y\)  is a base-point preserving isometric embedding, then there exists a unique linear isometric embedding \(\hat{\iota}\colon \mathcal{F}(X) \to \mathcal{F}(Y)\) satisfying \(\hat{\iota} \circ \delta_X=\delta_Y \circ \iota\). In fact, one necessarily has \(\hat{\iota}=\beta_{\delta_Y \circ \iota}\) and by using McShane's extension theorem (see, for example, \cite[Theorem 1.33]{doi:10.1142/9911}) it is easy to see that \(\hat{\iota}\) is distance preserving. Hence, if \(X\subset Y\), there is a canonical way to consider \(\mathcal{F}(X)\) as a subspace of \(\mathcal{F}(Y)\). 

Given two Banach spaces \(E\subset F\), the \textit{linear projection constant \(\lambda(E,F)\) of \(E\) relative to \(F\)} is by definition the infimum of the norms of all linear projections from \(F\) onto \(E\).  Projection constants have a rich history in Banach space theory. We refer to the books \cite{MR2300779, MR993774, MR902804} and the references therein for some classical results on projection constants. By the above, there is a canonical way to consider \(\mathcal{F}(X)\) as a subspace of \(\mathcal{F}(Y)\) whenever \(X\subset Y\), and therefore the projection constant \(\lambda(\mathcal{F}(X), \mathcal{F}(Y))\) is well-defined. If \(X\) is finite and \(Y\) is any metric space, then 
\begin{equation}\label{eq:dictionary}
\lambda_{\Lip}(X,Y)=\lambda\bigl(\mathcal{F}(X), \mathcal{F}(Y)\bigr).
\end{equation}
This equality is proven in Lemma~\ref{lem:translate-lip-to-proj}; see also \cite[Lemma 3.2]{MR2340707}. A standard argument shows that whenever \(X\) is a finite metric space then \(\nu(X,Y)\) is less than or equal to the supremum of \(\nu(X, F)\) taken over all finite subsets \(F\subset Y\) which contain \(X\); see Lemma~\ref{lem:reduction-to-finite-sets}.
Hence, by combining this with \eqref{eq:ae1}, \eqref{eq:dictionary}
and invoking Lemma~\ref{lem:finishLine}, we obtain the following formula for \(\absolutE(X)\) when \(X\) is a finite metric space.

\begin{theorem}\label{thm:main2}
For every finite metric space \(X\), 
\begin{equation}\label{eq:main-theorem14}
\absolutE(X)=\sup_{Y} \lambda\bigl(\mathcal{F}(X), \mathcal{F}(Y)\bigr),
\end{equation}
where the supremum is taken over all finite metric spaces \(Y\) containing \(X\).
Moreover,  \begin{equation}\label{eq:simple-consequence}
\absolutE(X)=\lambda\bigl(\mathcal{F}(\iota(X)), \mathcal{F}(c_0)\bigr)
\end{equation}
for every isometric embedding \(\iota\colon X\to c_0\). 
\end{theorem}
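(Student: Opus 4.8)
The plan is to prove \eqref{eq:main-theorem14} by establishing the two inequalities separately, and then to deduce \eqref{eq:simple-consequence} from it. For the lower bound in \eqref{eq:main-theorem14} I would simply specialize \eqref{eq:ae1} to finite ambient spaces: restricting the supremum in \eqref{eq:ae1} from all \(Y\supset X\) to finite \(Y\supset X\) only decreases it, and \eqref{eq:dictionary} rewrites each term \(\lambda_{\Lip}(X,Y)\) as \(\lambda(\mathcal F(X),\mathcal F(Y))\). This yields \(\absolutE(X)\geq \sup_{Y}\lambda(\mathcal F(X),\mathcal F(Y))\) with \(Y\) ranging over finite spaces containing \(X\).

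For the upper bound I would first use Lemma~\ref{lem:reduction-to-finite-sets}: since \(X\) is finite, \(\nu(X,Y)\leq\sup_F\nu(X,F)\) over finite \(F\) with \(X\subset F\subset Y\), so that \(\absolutE(X)=\sup_{Y}\nu(X,Y)\) equals the supremum of \(\nu(X,Y)\) over \emph{finite} \(Y\supset X\). It then suffices to prove \(\nu(X,Y)\leq\lambda_{\Lip}(X,Y)\) for finite \(X\subset Y\) and to invoke \eqref{eq:dictionary}. The crux is a reduction of the \emph{target} Banach space. Given Lipschitz \(f\colon X\to E\) and any extension \(\bar f\colon Y\to E\), the image \(\bar f(Y)\) is finite and spans a finite-dimensional subspace \(E_0\subset E\) containing \(f(X)\); restricting the codomain to \(E_0\) changes no Lipschitz constant, so \(\nu(X,Y)=\nu_{\fin}(X,Y)\). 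Finite-dimensional spaces are reflexive, hence dual, so \(\nu_{\fin}(X,Y)\leq\nu_{\dual}(X,Y)\). Finally, a linear extension operator \(T\colon\Lip_0(X)\to\Lip_0(Y)\) produces, for every dual target \(E=W^\ast\) and every Lipschitz \(f\colon X\to E\) with \(f(p)=0\), the extension \(\bar f\colon Y\to E\) defined coordinatewise by \(\langle\bar f(y),w\rangle\coloneqq(Tg_w)(y)\), where \(g_w\coloneqq\langle f(\cdot),w\rangle\in\Lip_0(X)\); one checks \(\Lip(\bar f)\leq\norm{T}\,\Lip(f)\), giving \(\nu_{\dual}(X,Y)\leq\lambda_{\Lip}(X,Y)\) (compare \eqref{eq:fin-dual} and Lemma~\ref{lem:finishLine}). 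Chaining these bounds with \eqref{eq:dictionary} and combining with the lower bound proves \eqref{eq:main-theorem14}.

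For \eqref{eq:simple-consequence}, fix an isometric embedding \(\iota\colon X\to c_0\); since \(\absolutE(\cdot)\) is an isometry invariant, \(\absolutE(X)=\absolutE(\iota(X))\). The inequality \(\lambda(\mathcal F(\iota(X)),\mathcal F(c_0))\leq\absolutE(X)\) is immediate from \eqref{eq:ae1} applied with ambient space \(c_0\supset\iota(X)\), together with \eqref{eq:dictionary}. For the reverse inequality I would use \eqref{eq:main-theorem14}: it is enough to show \(\lambda(\mathcal F(X),\mathcal F(Y))\leq\lambda(\mathcal F(\iota(X)),\mathcal F(c_0))\) for every finite \(Y\supset X\). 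To do so I would extend \(\iota\) to an isometric embedding \(j\colon Y\to c_0\) with \(j|_X=\iota\); the canonical inclusions then give closed subspaces \(\mathcal F(\iota(X))\subset\mathcal F(j(Y))\subset\mathcal F(c_0)\), and restricting any projection \(P\colon\mathcal F(c_0)\to\mathcal F(\iota(X))\) to \(\mathcal F(j(Y))\) yields a projection onto \(\mathcal F(\iota(X))\) of norm \(\leq\norm P\). As \(\iota\) and \(j\) are isometries, \(\lambda(\mathcal F(X),\mathcal F(Y))=\lambda(\mathcal F(\iota(X)),\mathcal F(j(Y)))\leq\lambda(\mathcal F(\iota(X)),\mathcal F(c_0))\), as required.

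The main obstacle is the isometric extension \(j|_X=\iota\). I would prove it one point at a time: adding a point \(y\) amounts to finding \(c\in c_0\) with \(\norm{c-\iota(x)}_\infty=d(y,x)\) for all \(x\in X\). Because each \(\ell_\infty^N\) is hyperconvex, such a point exists in \(\ell_\infty^N\) once \(N\) is large enough that the finitely many sup-norms \(\norm{\iota(x)-\iota(x')}_\infty\) are attained within the first \(N\) coordinates, so that truncation destroys none of the distance constraints; choosing \(N\) also large enough that the tails \(\sup_{n>N}\abs{\iota(x)_n}\) fall below the positive numbers \(d(y,x)\), one sets the remaining coordinates of \(c\) to \(0\) and lands in \(c_0\) with all distances exactly correct. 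Apart from this extension lemma, the conceptual heart of the whole argument is the reduction to dual targets in \eqref{eq:main-theorem14}, which is precisely what converts the non-linear quantity \(\absolutE(X)\) into the linear projection constants \(\lambda(\mathcal F(X),\mathcal F(Y))\).
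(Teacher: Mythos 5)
Your proof of \eqref{eq:main-theorem14} is correct, and it takes a mildly different route from the paper: where the paper invokes Lemma~\ref{lem:finishLine} (linearizing the universal extension problem for \(\delta_X\) through \(\mathcal{F}(X)\) and passing to adjoints), you re-derive \(\nu(X,Y)\leq\lambda_{\Lip}(X,Y)\) for finite \(X\subset Y\) via the chain \(\nu=\nu_{\fin}\leq\nu_{\dual}\leq\lambda_{\Lip}\), the last step being the classical weak-\(^\ast\) coordinatewise extension \(\langle\bar f(y),w\rangle=(Tg_w)(y)\). That part checks out.

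The proof of \eqref{eq:simple-consequence}, however, has a genuine gap: the extension lemma it relies on --- that a given isometric embedding \(\iota\colon X\to c_0\) extends to an isometric embedding \(j\colon Y\to c_0\) for every finite \(Y\supset X\) --- is false. Hyperconvexity of \(\ell_\infty^N\) only produces a point in an intersection of closed balls, i.e.\ the inequalities \(\norm{c-\iota(x)}_\infty\leq d(y,x)\), never the equalities you need; and no choice of \(N\) rescues this, since a fresh coordinate increases the distances to \emph{all} points \(\iota(x)\) by essentially the same amount. Concretely, let \(X=\{x_1,x_2,x_3\}\) with \(d(x_1,x_2)=d(x_1,x_3)=1\), \(d(x_2,x_3)=2\), embedded by \(\iota(x_1)=0\), \(\iota(x_2)=e_1\), \(\iota(x_3)=-e_1\), and let \(Y=X\cup\{y\}\) with \(d(y,x_1)=1\), \(d(y,x_2)=1.6\), \(d(y,x_3)=1.9\) (all triangle inequalities hold). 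If \(c\in c_0\) satisfies \(\norm{c}_\infty=1\), then every coordinate of \(c\) has modulus at most \(1\), so \(\norm{c-e_1}_\infty=1.6\) forces \(\abs{c_1-1}=1.6\), hence \(c_1=-0.6\); but then \(\norm{c+e_1}_\infty\leq 1<1.9\). So no isometric extension \(j\) exists --- and the same computation shows the distances to \(y\) cannot even be realized approximately (up to error \(0.3\), say), so an approximation-plus-limiting argument cannot repair the step either. This is exactly why the paper routes the proof through the injective hull: Lemma~\ref{lem:co-is-finitely-injective} yields only a \(1\)-Lipschitz extension \(\bar\iota\colon E(X)\to c_0\), and it is Isbell's defining property of \(E(X)\) --- not any property of the target \(c_0\) --- that upgrades this particular extension to an isometric embedding. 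Your argument can be saved by the same device: by Lemmas~\ref{lem:reduction-to-inj-hull} and~\ref{lem:reduction-to-finite-sets} the supremum in \eqref{eq:main-theorem14} may be restricted to finite \(Y\) with \(X\subset Y\subset E(X)\), and for such \(Y\) the restriction \(\bar\iota|_Y\) is precisely the isometric extension your projection-restriction argument requires.
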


%Recall that \(\lambda\bigl(\mathcal{F}(\iota(X)), \mathcal{F}(c_0)\bigr)\coloneqq \lambda\bigl(\hat{\iota}(\mathcal{F}(X)), \mathcal{F}(c_0)\bigr)\). 
Recall that \(\mathcal{F}(\iota(X))\) is naturally identified with \(\hat{\iota}(\mathcal{F}(X))\subset \mathcal{F}(c_0)\), and so \eqref{eq:simple-consequence} reads as follows: \(\absolutE(X)=\lambda\bigl(\hat{\iota}(\mathcal{F}(X)), \mathcal{F}(c_0)\bigr)\). 
There is a subtlety involving this identity.
%the quantity \(\lambda\bigl(\hat{\iota}(\mathcal{F}(X)), \mathcal{F}(c_0)\bigr)\).
In fact, if \(L\colon \mathcal{F}(X)\to \mathcal{F}(c_0)\) is any linear isometric embedding, then \(\absolutE(X)=\lambda\bigl(\hat{\iota}(\mathcal{F}(X)), \mathcal{F}(c_0)\bigr)\) is not necessarily true when \(\hat{\iota}(\mathcal{F}(X))\subset \mathcal{F}(c_0)\) is replaced with its isometric copy \(L(\mathcal{F}(X))\subset \mathcal{F}(c_0)\).

This can be seen as follows.
The unit ball of \(\mathcal{F}(X)\) is equal to the closed convex hull of the linear functionals \(m_{xy}=d(x,y)^{-1}\big(\delta(x)-\delta(y)\big),\, x\neq y\). Thus, if \(X\) is finite, then the unit ball of \(\mathcal{F}(X)\) is a convex polytope and  there exists a linear isometric embedding \(R\colon \mathcal{F}(X)\to c_0\). It is well-known (see, for example, \cite[Theorem III.B.5]{MR1144277}) that for any finite-dimensional subspace \(E\subset c_0\), one has \(\lambda(E, c_0)=\lambda(E)\), where \(\lambda(E)\) is the \textit{absolute projection constant} of \(E\), that is, 
\[
\lambda(E)=\sup\{ \lambda(E, F) : E\subset F\}.
\]
Hence, \(\lambda\big(R(\mathcal{F}(X)), c_0\big)=\lambda\big(\mathcal{F}(X)\big)\). Now, since \(c_0\) is separable, a deep result of Godefroy and Kalton (see \cite[Corollary 3.3]{MR2030906}) tells us that there is a linear isometric embedding \(S\colon c_0\to \mathcal{F}(c_0)\).

By setting \(L\coloneqq S\circ R\) we find, by the above, that \(\lambda(L(\mathcal{F}(X)), \mathcal{F}(c_0))=\lambda(\mathcal{F}(X))\). 
As we will see below, there are finite metric spaces \(X\) for which \(\absolutE(X)<\lambda(\mathcal{F}(X))\). Hence, for any such space the conclusion of Theorem~\ref{thm:main2} is not true if \(\hat{\iota}(\mathcal{F}(X))\subset \mathcal{F}(c_0)\) is replaced with the isometric copy \(L(\mathcal{F}(X))\subset \mathcal{F}(c_0)\) with \(L\) constructed as above.

\subsection{Applications}\label{sec:applications} 
As a direct consequence of Theorem \ref{thm:main2},  \begin{equation}\label{eq:trivial-estimate}
\absolutE(X) \leq \lambda(\mathcal{F}(X))
\end{equation}
for any finite metric space \(X\).
For polyhedral finite-dimensional Banach spaces \(E\) the exact value of \(\lambda(E)\) can be computed by solving a linear programming problem (see Lemma~\ref{lem:lin-prog} and the remark thereafter). Hence, as \(\mathcal{F}(X)\) polyhedral, \eqref{eq:trivial-estimate} gives a numeric upper bound of \(\absolutE(X)\) whenever the distance matrix of \(X\) is given.

In general the upper bound \eqref{eq:trivial-estimate} is far from being sharp. Indeed, for a finite weighted tree \(T\), Godard (see \cite[Corollary 3.6]{MR2680057}) proved that \(\mathcal{F}(T)\) is linearly isometric to \(\ell_1^n\), with \(n\coloneqq\abs{T}-1\); thus a result of Gr\"unbaum (see \cite[Theorem 3]{grunbaum1960projection}) tells us that for such a weighted tree \(T\) with \((n+1)\in 2\Z\) vertices, the right hand side of \eqref{eq:trivial-estimate} equals
\[
\frac{n \,\Gamma\bigl(n/2\bigr)}{\sqrt{\pi}\,\Gamma\big( (n+1)/2\big)} \text{\large \(\sim\)} \sqrt{\frac{2n}{\pi}}.
\]
Therefore, \eqref{eq:trivial-estimate} tells us that \(\absolutE(T)\leq C\sqrt{n}\). 
But, for any finite weighted tree \(T\), one has \(\absolutE(T)=1\), see Remark~\ref{rem:weighted-tree}, and we find that \eqref{eq:trivial-estimate} is clearly not sharp.

In what follows, we determine the exact value of \(\absolutE(3)\). From \eqref{eq:trivial-estimate}, we obtain
\begin{equation}\label{eq:crude}
\absolutE(n) \leq \lambda_{n-1},
\end{equation}
where \(\lambda_{n}\) denotes \textit{the maximal projection constant of order \(n\)}, that is,
\begin{equation}\label{eq:maximal-proj}
\lambda_n\coloneqq\sup\{ \lambda(E) : \dim(E)=n\}.
\end{equation}
Due to an important result of Kadets and Snobar (see \cite{MR291770}), one has \(\lambda_n\leq \sqrt{n}\) for all \(n\geq 1\). 
The maximal projection constant \(\lambda_n\) is difficult to compute, the only known values are \(\lambda_1=1\) and \(\lambda_2=\tfrac{4}{3}\), the former due to the Hahn-Banach theorem and the latter due to Chalmers and Lewicki \cite{BruceL2010}. 
In \cite{Konig1985}, K\"onig proved that \(\lambda_n \geq \sqrt{n}-1\) for a subsequence of integers \(n\). Hence, by taking into account Lee and Naor's upper bound of \(\absolutE(n)\), we find that \eqref{eq:crude} is not sharp for \(n\geq 1\) large enough. 
But for \(n=3\) the inequality \eqref{eq:crude} is in fact an equality:
\begin{proposition}\label{prop:ae3}
\[\absolutE(3)=\frac{4}{3}.\]
\end{proposition}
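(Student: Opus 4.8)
The plan is to establish the two inequalities $\absolutE(3)\le 4/3$ and $\absolutE(3)\ge 4/3$ separately. The upper bound is already in hand: \eqref{eq:crude} gives $\absolutE(3)\le\lambda_2$, and $\lambda_2=4/3$ by the theorem of Chalmers and Lewicki, so $\absolutE(3)\le 4/3$. Everything else is devoted to the matching lower bound, for which it suffices to produce a single three-point space $X_0$ with $\absolutE(X_0)\ge 4/3$.

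First I would take $X_0$ to be the equilateral triangle, the three-point space with all distances equal to $1$, and compute its free space. Fixing one point as base point, $\Lip_0(X_0)$ is $\R^2$ with the norm $\max\{|s|,|t|,|s-t|\}$, whose unit ball is an affinely regular hexagon; since all affinely regular hexagons are linear images of one another, both $\Lip_0(X_0)$ and its dual $\mathcal F(X_0)$ are linearly isometric to the space $H\coloneqq\{(x,y,z):x+y+z=0\}\subset\ell_\infty^3$, whose unit ball is the regular hexagonal cross-section of the cube. A direct computation shows that the symmetric projection $x\mapsto x-\tfrac13(x_1+x_2+x_3)(1,1,1)$ onto $H$ has norm $4/3$, and by the $S_3$-symmetry of $H$ it is the unique equivariant projection, hence a minimal one, giving $\lambda(H,\ell_\infty^3)=4/3$. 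Together with $\lambda(H)\le\lambda_2=4/3$ this yields $\lambda(\mathcal F(X_0))=\lambda(H)=4/3$.

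By \eqref{eq:trivial-estimate} we then have $\absolutE(X_0)\le\lambda(\mathcal F(X_0))=4/3$, so the heart of the matter is the reverse inequality, which by Theorem~\ref{thm:main2} reads $\sup_Y\lambda(\mathcal F(X_0),\mathcal F(Y))=4/3$. This is precisely where the relative/absolute subtlety discussed after Theorem~\ref{thm:main2} is in force: the copy of $\mathcal F(X_0)$ in $\mathcal F(c_0)$ obtained from a polyhedral embedding $\mathcal F(X_0)\hookrightarrow c_0\hookrightarrow\mathcal F(c_0)$ automatically sees $\lambda(\mathcal F(X_0))=4/3$, but that copy is not the canonical one and so is inadmissible. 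The plan is instead to embed $X_0$ symmetrically, $a,b,c\mapsto e_1,e_2,e_3$, inside a copy of $\ell_\infty^N$ (or directly into $c_0$, invoking \eqref{eq:simple-consequence}), and to use that any projection of $\mathcal F(Y)$ onto the canonical copy of $\mathcal F(X_0)$ may be averaged over the $S_3$-action permuting the three points; the minimal projection can thus be taken $S_3$-equivariant, the symmetry pins it down, and its norm is to be computed and shown to be at least $4/3$.

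The hard part will be this last step: exhibiting canonical free-space superspaces for which the relative projection constant is forced all the way up to $4/3$ rather than stopping short. I expect to handle it in one of two ways — either (i) realize the extremal behaviour of the minimal projection of $H$ in $\ell_\infty^3$ by a concrete finite metric superspace $Y_n\supset X_0$ and show $\lambda(\mathcal F(X_0),\mathcal F(Y_n))\to 4/3$, or (ii) work directly with an isometric embedding $\iota\colon X_0\to c_0$ and evaluate $\lambda(\mathcal F(\iota(X_0)),\mathcal F(c_0))$ via \eqref{eq:simple-consequence}, using the $S_3$-averaging to reduce to the norm of a single equivariant projection. In either route, the crucial and most delicate point is the lower estimate — verifying that the canonical copy of $\mathcal F(X_0)$ is genuinely as hard to project onto as the absolute projection constant permits, i.e. that the equivariant projection cannot have norm below $4/3$.
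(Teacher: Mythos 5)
Your upper bound is exactly the paper's: \eqref{eq:crude} together with \(\lambda_2=4/3\) (Chalmers--Lewicki). The genuine gap is in the lower bound, where you stop precisely at the decisive step. Identifying \(\mathcal{F}(X_0)\) with the hexagonal space \(H\subset\ell_\infty^3\) and computing \(\lambda(H,\ell_\infty^3)=\lambda(\mathcal{F}(X_0))=4/3\) is correct, but it only re-derives the upper bound \(\absolutE(X_0)\leq 4/3\) via \eqref{eq:trivial-estimate}; it gives no information about \(\sup_Y\lambda(\mathcal{F}(X_0),\mathcal{F}(Y))\), since, as you note yourself, the copy of \(\mathcal{F}(X_0)\) realizing the absolute constant inside \(\mathcal{F}(c_0)\) is not the canonical one. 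What is missing is an actual metric space \(Y\supset X_0\) together with a proof that \(\lambda(\mathcal{F}(X_0),\mathcal{F}(Y))\geq 4/3\); neither of your routes (i), (ii) is carried out. Route (ii) in particular is not workable as stated: equivariant projections of \(\mathcal{F}(c_0)\) onto the canonical copy of \(\mathcal{F}(X_0)\) are far from unique (there are many invariant complements in this huge space), so \(S_3\)-averaging pins nothing down, and you have no mechanism for bounding the norm of an equivariant projection from below.

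The paper closes exactly this gap with a one-point extension (Lemma~\ref{lem:n-pods}). Take \(Y=X_0\cup\{o\}\), where \(o\) is a center at distance \(1/2\) from each vertex of the unit equilateral triangle \(X_0\). Then \(Y\) carries the metric of a weighted star tree, so by the explicit isometry of Lemma~\ref{lem:explicit-isometry} (Godard's theorem) one has \(\mathcal{F}(Y)\cong\ell_1^3\), and under this isometry the canonical copy of \(\mathcal{F}(X_0)\) becomes the hyperplane \(\{v\in\ell_1^3 : v_1+v_2+v_3=0\}\); Bohnenblust's classical result then gives \(\lambda(\mathcal{F}(X_0),\mathcal{F}(Y))=2-\tfrac{2}{3}=\tfrac{4}{3}\), and Theorem~\ref{thm:main2} (or Lemma~\ref{lem:finishLine} combined with the injectivity of the geodesic star, an \(\R\)-tree, via Lemma~\ref{lem:reduction-to-inj-hull}) converts this into \(\absolutE(X_0)\geq 4/3\). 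So your ``route (i)'' does succeed, with a single explicitly computable four-point superspace \(Y\) --- but that construction and computation are exactly the content your proposal leaves unproven, so as it stands the proof is incomplete.
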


We suspect that inequality \eqref{eq:crude} is strict already for \(n=4\).
 Our next result bounds the quantity \(\absolutE(4)\). 
\begin{proposition}\label{prop:ae4}
\[\frac{5+4\sqrt{2}}{7} \leq \absolutE(4) \leq\frac{3+6 \sqrt{2}}{7}.\]
\end{proposition}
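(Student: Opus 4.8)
The plan is to bound $\absolutE(4)=\sup_{\abs{X}=4}\absolutE(X)$ from both sides using the two halves of the dictionary between absolute extendability and projection constants. For the upper bound I would start from \eqref{eq:trivial-estimate}, which gives $\absolutE(4)\leq \sup_{\abs{X}=4}\lambda(\mathcal{F}(X))$, and reduce the problem to showing $\lambda(\mathcal{F}(X))\leq (3+6\sqrt{2})/7$ for every four-point metric space $X$. Since $\mathcal{F}(X)$ is three-dimensional and polyhedral, with unit ball $\conv\{\pm m_{xy}\}$, Lemma~\ref{lem:lin-prog} expresses $\lambda(\mathcal{F}(X))$ as the value of a linear program whose data are built from the six pairwise distances of $X$. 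The task is then to maximize this value over the five-parameter family (up to scaling) of four-point distance matrices constrained by the triangle inequalities.

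To control this outer optimization I would use that every base-point preserving isometry of $X$ induces, by the functoriality recalled before Definition~\ref{def:lipschitz-free}, a linear isometry of $\mathcal{F}(X)$; averaging a minimal projection over the isometry group of $X$ does not increase its norm, so $\lambda(\mathcal{F}(X))$ is computed by an equivariant projection. This simultaneously shrinks the inner linear program and makes it plausible that the outer maximum is attained at a configuration of maximal symmetry compatible with the $\sqrt{2}$ in the target value; the natural candidate is the set of four vertices of a Euclidean square. For that fixed $X$ the symmetry collapses the linear program to a handful of variables, which I would solve by hand to obtain exactly $(3+6\sqrt{2})/7$, and I would then check that the remaining boundary configurations of the distance simplex give strictly smaller values — for instance the equilateral four-point space, for which the same method yields only $3/2$.

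For the lower bound I would invoke the exact formula \eqref{eq:main-theorem14}: it suffices to exhibit a single finite metric space $Y\supset X$ with $\abs{X}=4$ and $\lambda(\mathcal{F}(X),\mathcal{F}(Y))\geq (5+4\sqrt{2})/7$, since then $\absolutE(4)\geq\absolutE(X)\geq\lambda(\mathcal{F}(X),\mathcal{F}(Y))$. I would take $X$ to be the same square and $Y$ an explicit symmetric extension (its centre, or the midpoints of its sides, added at a distance chosen to keep the isometry group acting), so that the relative-projection program of Lemma~\ref{lem:lin-prog} again reduces to a small symmetric system. A projection is never smaller than its symmetrization, so a lower bound on the relative projection constant is obtained by producing a feasible solution of the dual program — a Chalmers--Metcalf-type certificate — of value $(5+4\sqrt{2})/7$; equivalently, one verifies that the essentially unique equivariant projection of $\mathcal{F}(Y)$ onto $\mathcal{F}(X)$ already has norm at least this value.

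The symmetry-reduced linear programs are routine once the extremal spaces are fixed. The main obstacle is the upper bound: proving that $(3+6\sqrt{2})/7$ really is the supremum of $\lambda(\mathcal{F}(X))$, rather than merely the value at one symmetric point, requires controlling the linear program uniformly over the entire distance simplex, and symmetrization is delicate precisely because the minimal projection need not be unique away from the most symmetric configurations. A secondary difficulty is the choice of the extension $Y$ in the lower bound: a suboptimal $Y$ yields a weaker constant, and pinning down the finite $Y$ that realizes the largest relative projection constant — conjecturally giving the equality $\absolutE(4)=(5+4\sqrt{2})/7$ — is exactly the subtle point.
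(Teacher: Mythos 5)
Your lower-bound strategy is structurally the same as the paper's (exhibit one explicit four-point \(X\) inside one explicit finite \(Y\) and certify \(\lambda(\mathcal{F}(X),\mathcal{F}(Y))\geq (5+4\sqrt{2})/7\) by a dual/equivariance argument), but your upper-bound strategy is genuinely different and contains the real gap. You replace the statement to be proven by the strictly stronger claim \(\sup_{\abs{X}=4}\lambda(\mathcal{F}(X))\leq (3+6\sqrt{2})/7\), where \(\lambda(\mathcal{F}(X))\) is the \emph{absolute} projection constant. Nothing in the paper, and nothing in your proposal, establishes this; the paper explicitly warns that \eqref{eq:trivial-estimate} is in general far from sharp, and verifying the inequality ``by hand'' at the square and then at ``the remaining boundary configurations'' cannot work, because the four-point metrics form a five-parameter family and the maximizer of an absolute projection constant need not sit at a symmetric point (indeed the paper's own extremal candidate is \emph{not} the square, see below). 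The paper's proof avoids the absolute constant altogether: Proposition~\ref{prop:injectiveHull}, using Dress's explicit description of the injective hull of a four-point space (a rectangle with four segments attached, inside \(\ell_1^2\)), produces for every four-point \(X\) a superspace \(Y\supset X\) with \(\abs{Y}\leq 8\) and \(\absolutE(X)=\nu(X,Y)\); then Lemma~\ref{lem:finishLine} gives \(\absolutE(X)=\lambda(\mathcal{F}(X),\mathcal{F}(Y))\leq\bar{\lambda}(3,7)\), and the K\"onig--Lewis--Lin inequality \eqref{eq:koenig} yields exactly \((3+6\sqrt{2})/7\). Keeping the projection constant \emph{relative} to a superspace of dimension at most \(7\) is precisely what makes the bound provable; your route discards this reduction and substitutes a claim that may well be false and is in any case out of reach of the proposed case-checking.

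On the lower bound, your framework (\eqref{eq:main-theorem14} plus a dual certificate, with symmetrization reducing to equivariant projections) is sound, but the configuration is a guess and the value is never computed. The paper does not use the Euclidean square: it takes the four vertices of the \(\ell_\infty^2\)-rectangle with side lengths \(2\) and \(1+\sqrt{2}\) (so two distances equal \(2\) and four equal \(1+\sqrt{2}\), a different metric type from the square, whose diagonal-to-side ratio is \(\sqrt{2}\)), and adds two interior points \(y_1,y_2\) chosen so that \(Y\) is isometric to a weighted tree with \(y_1,y_2\) as internal vertices. This makes \(\mathcal{F}(Y)\) linearly isometric to \(\ell_1^5\) via Lemma~\ref{lem:explicit-isometry}, and an explicit matrix \(A=DS\) with \(\nu_1(A)=1\) and \(AP=PAP\) then gives \(\lambda(\mathcal{F}(X),\mathcal{F}(Y))\geq\Tr(AP)=(5+4\sqrt{2})/7\) by Proposition~\ref{prop:rel-pro-via-trace-duality}; this matches Chalmers--Lewicki's value \(\lambda(3,5)\). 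Whether your square plus centre (note: that \(Y\) is not a tree metric, so \(\mathcal{F}(Y)\not\cong\ell_1^4\)) or plus midpoints reaches the same constant is unverified, and since the whole point is to hit a specific extremal value, the choice of configuration is not a detail one can defer.
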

The upper bound is obtained in two steps. First, we show that if \(X\) consists of four points, then there exists an eight-point metric space \(Y\), such that \(X\subset Y\) and \(\absolutE(X)=\lambda\bigl(\mathcal{F}(X), \mathcal{F}(Y)\bigr)\). This is done by considering the injective hull of \(X\).  As a result, \(\absolutE(4) \leq \bar{\lambda}(3,7)\), where
\begin{equation}\label{eq:def-of-bar-lambda}
\bar{\lambda}(n,d)\coloneqq\max\bigl\{ \lambda(E, F) : E\subset F, \, \dim(E)=n \text{ and } \dim(F)=d \bigr\}.    
\end{equation}
Now, the upper bound of \(\absolutE(4)\) follows from the estimate
\begin{equation}\label{eq:koenig}
\bar{\lambda}(n,d)\leq \frac{n}{d}+\sqrt{\bigl(d-1\bigr)\frac{n}{d}\bigl(1-\frac{n}{d}\bigr) },
\end{equation}
%with equality if and only if \(\R^n\)
%admits a system of \(d\) distinct equiangular
%lines. 
which is due to K\"onig, Lewis, and Lin (see \cite{konig1983finite}). 

Thanks to Theorem~\ref{thm:main2}, the lower bound in Proposition \ref{prop:ae4} follows from a  computation showing that \(\lambda(\mathcal{F}(X_0), \mathcal{F}(Y_0))\geq (5+4\sqrt{2})/7\), where 
\[
X_0\coloneqq\Bigl\{ \bigl(0,0\bigr), \bigl(1+\sqrt{2},0\bigr), \bigl(0,2\bigr), \bigl(1+\sqrt{2},2\bigr) \Bigr\}\subset \ell_\infty^2,
\] 
the vertices of a rectangle with side lengths \(2\) and \(\sqrt{2}+1\) considered as a subset of \(\ell_\infty^2=(\R^2, \norm{\cdot}_{\infty})\). In \cite{CHALMERS2009553}, Chalmers and Lewicki  showed that \(\lambda(3,5)=(5+4\sqrt{2})/7\), where
\begin{equation}\label{eq:maximal-relative-proj}
\lambda(n,d)\coloneqq\max\bigl\{ \lambda(E, \ell_\infty^d) : \text{\(E\) is an \(n\)-dimensional subspace of \(\ell_\infty^d\)} \bigr\}
\end{equation}
for all \(n,d\geq 1\) with \(n\leq d\). Numerical simulations strongly suggest that the lower bound in Proposition \ref{prop:ae4} is sharp and thus \(\absolutE(4)=\lambda(3,5)\). This is particularly intriguing as \(\lambda_2=\lambda(2,3)\) and thus \(\absolutE(3)=\lambda(2,3)\) due to Proposition \ref{prop:ae3}. We do not know if this is part of a general pattern. 

Up to \(d=6\), the exact values of \(\lambda(n,d)\) are known (see \cite[Section 1.4]{MR4001080}) and numerical lower bounds have been calculated in \cite[p.326]{MR3566474} up to \(d=10\). Moreover,
in \cite[Lemma 2.6]{CHALMERS2009553}, it is shown that \(\lambda(d-1,d)=2-2/d\)  whenever \(d\geq 2\). Thus, on account of \eqref{eq:koenig}, it follows that \(\bar{\lambda}(d-1, d)\) is equal to \(\lambda(d-1,d)\). But, in general, it seems to be an open question whether \(\bar{\lambda}(n,d)=\lambda(n,d)\) for all \(n,d\geq 1\) with \(n\leq d\).

As our last application of Theorem \ref{thm:main2} we establish an upper bound of \(\absolutE(X)\) for arbitrary metric spaces. We put \(\sep X\coloneqq \inf\{ d(x,x') : x, x'\in X, x\neq x'\}\). The upper bound 
\[
\absolutE(X)\leq 2\frac{\diam X}{\sep X}
\]
has been obtained by Johnson, Lindenstrauss and Schechtman in \cite[p. 138]{MR852474}. We can strengthen their estimate as follows:

\begin{proposition}\label{prop:bounds-ae-sep}
For every metric space \(X\), 
\[\absolutE(X)\leq  2\Big(1-\frac{1}{\abs{X}}\Big)\frac{\diam X}{\sep X}. \]
\end{proposition}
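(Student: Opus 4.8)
The plan is to bound the trace constants directly, since by definition $\absolutE(X)=\sup_Y\nu(X,Y)$. When $X$ is infinite the factor $1-1/\abs{X}$ equals $1$ and the assertion is precisely the bound of Johnson, Lindenstrauss and Schechtman, so I may assume $\abs{X}=n<\infty$ and write $X=\{x_1,\dots,x_n\}$, $D=\diam X$, $s=\sep X$. The first reduction is to replace an arbitrary target by the Lipschitz-free space. Given a Lipschitz $f\colon X\to E$, subtract a constant so that $f(p)=0$; by the universal property $f=\beta_f\circ\delta_X$ with $\norm{\beta_f}=\Lip(f)$, so if $\Phi\colon Y\to\mathcal{F}(X)$ extends $\delta_X$ with $\Lip(\Phi)=K$, then $\beta_f\circ\Phi$ extends $f$ with Lipschitz constant at most $K\,\Lip(f)$. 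Hence $\nu(X,Y)$ is the least Lipschitz constant of an extension of the canonical isometry $\delta_X\colon X\to\mathcal{F}(X)$, and it suffices to produce, for every finite $Y\supset X$, a map $\Phi\colon Y\to\mathcal{F}(X)$ with $\Phi|_X=\delta_X$ and $\Lip(\Phi)\le 2(1-1/n)D/s$.

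For the construction I would center everything at the barycenter $\mu\coloneqq\frac1n\sum_{i=1}^n\delta(x_i)\in\mathcal{F}(X)$ and use the truncated distance functions $\phi_i(y)\coloneqq\bigl(1-d(y,x_i)/s\bigr)_+$. Since the family $\{x_i\}$ is $s$-separated one has $\phi_i(x_j)=\delta_{ij}$, so setting
\[
\Phi(y)\coloneqq\mu+\sum_{i=1}^n\phi_i(y)\bigl(\delta(x_i)-\mu\bigr)
\]
yields $\Phi(x_j)=\delta(x_j)$, while points far from $X$ are sent to $\mu$. The choice of $\mu$ as base point is exactly what produces the gain $1-1/n$: writing $\delta(x_i)-\mu=\frac1n\sum_{k\neq i}\bigl(\delta(x_i)-\delta(x_k)\bigr)$ and using $\norm{\delta(x_i)-\delta(x_k)}_{\mathcal{F}(X)}=d(x_i,x_k)$ gives $\norm{\delta(x_i)-\mu}_{\mathcal{F}(X)}\le\frac1n\sum_{k\neq i}d(x_i,x_k)\le(1-1/n)D$, whereas centering at a single vertex $\delta(x_{i_0})$ would only give $D$ and reproduce the older constant $2D/s$.

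It then remains to estimate $\Lip(\Phi)$. Expanding $\Phi(y)-\Phi(y')=\sum_i\bigl(\phi_i(y)-\phi_i(y')\bigr)(\delta(x_i)-\mu)$ and using the duality between $\mathcal{F}(X)$ and $\Lip_0(X)$, one has
\[
\norm{\Phi(y)-\Phi(y')}_{\mathcal{F}(X)}=\sup_{\abs{g}_{\Lip}\le1}\ \sum_{i=1}^n\bigl(\phi_i(y)-\phi_i(y')\bigr)\bigl(g(x_i)-\bar g\bigr),
\]
where $\bar g=\frac1n\sum_k g(x_k)$ and the mean-zero shift is legitimate because $\sum_i(\delta(x_i)-\mu)=0$. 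The hard part, and the main obstacle, is to show this quantity is at most $2(1-1/n)(D/s)\,d(y,y')$. The crude term-by-term estimate $\sum_i\abs{\phi_i(y)-\phi_i(y')}\cdot\abs{g(x_i)-\bar g}$ loses the decisive factor and only recovers $2D/s$; one must instead exploit cancellation, using that a curve of unit speed cannot simultaneously approach several $s$-separated points, so that the increments of the tents $\phi_i$, paired against the mean-zero weights $g(x_i)-\bar g$, largely cancel. Quantifying this through the transport interpretation of the $\mathcal{F}(X)$-norm should yield an effective slope $2/s$, which together with the barycentric estimate $\norm{\delta(x_i)-\mu}\le(1-1/n)D$ gives the claim; taking the supremum over all finite $Y\supset X$ then finishes the proof. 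I expect the bound to be sharp here: for three points at mutual distance $s$ together with a point at distance $s/2$ from each, the construction sends the extra point exactly to $\mu$ and realizes the value $2(1-\tfrac13)=\tfrac43$.
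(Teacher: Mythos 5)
Your argument has a genuine gap, and in fact its central construction fails. Everything rests on the estimate \(\Lip(\Phi)\le 2(1-1/n)D/s\), which you never establish (``should yield'' is the operative phrase), and this estimate is false for the map \(\Phi\) you define. Concretely, let \(n\ge 4\) be even, let \(X=\{x_1,\dots,x_n\}\) have all pairwise distances equal to \(s\) (so \(D=s\)), split the indices into halves \(I_+\cup I_-\), and adjoin two points \(y\), \(y'\) with \(d(y,y')=2\varepsilon\) for some \(0<\varepsilon\le s/4\), where
\[
d(y,x_i)=s,\quad d(y',x_i)=s-2\varepsilon \ \ (i\in I_+),\qquad
d(y,x_i)=s-2\varepsilon,\quad d(y',x_i)=s \ \ (i\in I_-).
\]
All triangle inequalities hold, so \(Y=X\cup\{y,y'\}\) is a metric space. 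Then \(\phi_i(y)=0\), \(\phi_i(y')=2\varepsilon/s\) for \(i\in I_+\) and the reverse for \(i\in I_-\), whence
\[
\Phi(y)-\Phi(y')=\frac{2\varepsilon}{s}\Bigl(\sum_{i\in I_-}\delta(x_i)-\sum_{i\in I_+}\delta(x_i)\Bigr),
\]
the \(\mu\)-terms cancelling because \(\abs{I_+}=\abs{I_-}\). Pairing this with the \(1\)-Lipschitz function equal to \(s/2\) on \(I_-\) and \(-s/2\) on \(I_+\) (the coefficients sum to zero, so renormalizing at the base point changes nothing) gives \(\norm{\Phi(y)-\Phi(y')}\ge \frac{2\varepsilon}{s}\cdot n\cdot\frac{s}{2}=n\varepsilon=\frac{n}{2}\,d(y,y')\). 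Hence \(\Lip(\Phi)\ge n/2\), while your claimed bound is \(2(1-1/n)D/s<2\). This also refutes the heuristic you invoke: near the boundaries of the balls \(B(x_i,s)\) a point \emph{can} move toward half of the \(s\)-separated points and away from the other half at full speed simultaneously, and against mean-zero weights these increments add up rather than cancel; the heuristic is only valid inside the disjoint balls \(B(x_i,s/2)\). (Relatedly, the crude term-by-term bound gives \((n-1)D/s\), not \(2D/s\) as you state.)

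The paper takes a route that avoids any explicit extension formula, and the comparison is instructive. First, Lemma~\ref{lem:bilip-esti} (proved via injective hulls) gives bilipschitz invariance: since the identity map from \((X,d)\) to the equilateral space \(X_\delta\) on the same point set is \((\diam X/\sep X)\)-bilipschitz, one gets \(\absolutE(X)\le(\diam X/\sep X)\,\absolutE(X_\delta)\). Second, Lemma~\ref{lem:n-pods} computes \(\absolutE(X_\delta)\) exactly: an equilateral space embeds as the set of leaves of a star-shaped \(\R\)-tree \(W\), which is injective, so \(\absolutE(X_\delta)=\nu(X_\delta,W)\), and this reduces to \(\nu(X_\delta,X_\delta\cup\{o\})\) for the single center point \(o\); by Lemma~\ref{lem:finishLine} and Godard's isometry this equals the relative projection constant \(\lambda(H,\ell_1^n)\) of the hyperplane \(H=\{v:\sum_i v_i=0\}\), which is \(2-2/n\) by Bohnenblust's theorem. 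So the factor \(1-1/\abs{X}\) you are after is of linear-algebraic origin (a projection constant), and trying to manufacture it from a barycentric partition-of-unity extension runs into exactly the obstruction exhibited above.
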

We use the convention that \(\tfrac{a}{0}=\infty\), \(\tfrac{a}{\infty}=0\) and \(a\cdot \infty=\infty\) for all \(a>0\). 

\subsection{Acknowledgements}
Parts of this work are contained in the author’s PhD thesis \cite{20.500.11850/398970}. I am thankful to Urs Lang for helpful discussions. Moreover, I am indebted to the anonymous reviewer for a very helpful report from which the article benefited greatly.

\section{Preliminaries}

\subsection{Injective metric spaces}

A metric space \(X\) is called \textit{injective} if whenever \(A\subset B\) are metric spaces and \(f\colon A\to X\) is a \(1\)-Lipschitz map, then there exists a \(1\)-Lipschitz map \(\bar{f}\colon B\to X\) extending \(f\). It is well-known that a metric space is injective if and only if it is an absolute \(1\)-Lipschitz retract, or if and only if it is hyperconvex (see, for example, \cite[Propositions 2.2 and 2.3]{lang2013}). Examples of injective metric spaces include \(\ell_\infty(I)\), for any index set \(I\), and complete \(\R\)-trees (see \cite[Proposition 2.1]{lang2013}). It is easy to check that if \(X\) is injective, then \(\absolutE(X)=1\). 

The following result goes back to Isbell \cite{isbell}. For every metric space \(X\) there exists an injective metric space \(E(X)\) such that \(X\subset E(X)\) and if \(f\colon E(X)\to Y\)  is a \(1\)-Lipschitz map for which \(f|_X\) is an isometric embedding, then the map \(f\) is an isometric embedding. Thus, if \(X\subset Y\) and \(Y\) is injective, then \(X\subset E(X) \subset Y\) and \(E(X)\) may be interpreted as the `smallest' injective metric space containing \(X\). The space \(E(X)\) is called \textit{injective hull} of \(X\). Equivalent characterizations of the injective hull can be found in \cite[Proposition 3.4]{lang2013}.

For finite metric spaces \(X\) the injective hull \(E(X)\) is a finite-dimensional polyhedral complex having only finitely many isometry types of cells. The cells are subsets of \(\ell_\infty^{n}\), where \(n\) is the greatest integer such that \(2n\leq \abs{X} \). For a recent survey of injective hulls with applications to geometric group theory we refer to Lang's article \cite{lang2013}. 
Our interest in injective metric spaces stems from the following simple lemma.

\begin{lemma}\label{lem:reduction-to-inj-hull}
If \(X\) is a metric space, then  \(\absolutE(X)=\nu(X,Y)\) for every injective metric space \(Y\) containing \(X\). 
In particular, \(\absolutE(X)=\nu(X, E(X))\). 
\end{lemma}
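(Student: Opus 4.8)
The plan is to prove the two inequalities \(\nu(X,Y)\leq \absolutE(X)\) and \(\nu(X,Y)\geq \absolutE(X)\) separately. The first is immediate: since \(Y\) is one particular metric space containing \(X\), the quantity \(\nu(X,Y)\) is one of the terms in the supremum defining \(\absolutE(X)\), whence \(\nu(X,Y)\leq \absolutE(X)\). All the work goes into the reverse inequality, which amounts to showing that an arbitrary metric space containing \(X\) is ``no worse'' than the injective overspace \(Y\).

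So I would fix an arbitrary metric space \(Z\) with \(X\subset Z\) and show that \(\nu(X,Z)\leq \nu(X,Y)\). The idea is to exploit injectivity of \(Y\) to fold \(Z\) back into \(Y\). Concretely, the inclusion \(X\hookrightarrow Y\) is a \(1\)-Lipschitz (indeed isometric) map from the subspace \(X\subset Z\) into the injective space \(Y\), so by the definition of injectivity it extends to a \(1\)-Lipschitz map \(r\colon Z\to Y\) satisfying \(r|_X=\id_X\) when we regard \(X\subset Y\).

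With \(r\) in hand the transfer costs nothing. Let \(E\) be a Banach space and \(f\colon X\to E\) a Lipschitz map, and suppose \(\bar{f}\colon Y\to E\) is any Lipschitz extension of \(f\). Then \(\bar{f}\circ r\colon Z\to E\) is again an extension of \(f\), because \((\bar{f}\circ r)|_X=\bar{f}|_X=f\); and since \(r\) is \(1\)-Lipschitz one has \(\Lip(\bar{f}\circ r)\leq \Lip(\bar{f})\). Taking the infimum over admissible extensions \(\bar{f}\) gives \(e(X,Z,E,f)\leq e(X,Y,E,f)\); supremizing over \(E\) and \(f\) yields \(\nu(X,Z)\leq \nu(X,Y)\), and then supremizing over all metric spaces \(Z\supset X\) gives \(\absolutE(X)\leq \nu(X,Y)\). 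Combined with the first inequality this proves \(\absolutE(X)=\nu(X,Y)\), and the ``in particular'' statement follows at once since the injective hull \(E(X)\) is an injective metric space containing \(X\). I expect the only genuinely load-bearing step to be the construction of \(r\) from injectivity; the subtlety worth flagging is simply that post-composing with a \(1\)-Lipschitz map never increases the Lipschitz constant, which is precisely what lets the extension built over \(Y\) be reused over \(Z\) without any loss in the constant.
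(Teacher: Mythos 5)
Your proof is correct and follows essentially the same route as the paper: both use injectivity of \(Y\) to extend the inclusion \(X\hookrightarrow Y\) to a \(1\)-Lipschitz map \(r\colon Z\to Y\) (the paper calls it \(\bar{i}\colon Y'\to Y\)) and then transfer extensions over \(Y\) to extensions over \(Z\) by composition, with no loss in the Lipschitz constant. The only cosmetic difference is that you take an infimum over extensions \(\bar{f}\) directly, whereas the paper runs an equivalent \(\varepsilon\)-argument with \(K_\varepsilon=\nu(X,Y)+\varepsilon\).
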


\begin{proof}
Let \(Y\supset X\) be an injective metric space and \(Y'\supset X\) any metric space. As \(Y\) is injective, the identity map \(i\colon X\to X\) admits a \(1\)-Lipschitz extension \(\bar{i}\colon Y'\to Y\).
Fix \(\varepsilon >0\) and set \(K_\varepsilon\coloneqq \nu(Y, X)+\varepsilon\).
Suppose now that \(f\colon X\to E\) is an \(L\)-Lipschitz map to a Banach space \(E\).
By definition of \(\nu(X, Y)\), there is \(K_\varepsilon L\)-Lipschitz map \(\bar{f}\colon Y\to E\) extending \(f\).
The composition \(\bar{f}\circ\bar{i}\) is an \(K_\varepsilon L\)-Lipschitz extension of \(f\) to \(Y'\). As \(\varepsilon>0\) war arbitrary, we conclude that \(\nu(X, Y')\leq \nu(X,Y)\). This completes the proof. 
\end{proof}

\begin{remark}\label{rem:weighted-tree}
In what follows, we show that \(\absolutE(T)=1\) whenever \(T\) is a finite weighted tree equipped with the shortest-path metric \(d_T\), which is defined in \eqref{eq:shortest-path}.    
Suppose that \(T=(X, E, \omega)\) is a finite weighted tree with positive weights. Then it is not hard to see that  \(X_T\coloneqq (X, d_T)\) is a \(0\)-hyperbolic metric space (such spaces are also called tree-like metric spaces). Hence, by a result of Dress (see \cite[Theorem 8]{MR753872}), it follows that \(Y\coloneqq E(X_T)\) is a complete \(\R\)-tree. Given \(y\), \(y'\in Y\) we denote by \([y,y']_Y\subset Y\) the unique geodesic connecting \(y\) and \(y'\) and we abbreviate \((y,y')_Y\coloneqq [y, y']_Y\setminus \{y,y'\}\). By applying \cite[Lemma 2.3]{basso2021approximating}, we obtain
\begin{equation}\label{eq:darstellung-Y}
Y=\bigcup_{x, x'\in X} [x,x']_Y.
\end{equation}
Since \(Y\) is uniquely geodesic and \(X_T\subset Y\), for every shortest path \((x_0, \dots, x_k)\) in \(T\) one has \([x_0, x_k]_Y=[x_0, x_1]_Y\cup \dotsm \cup [x_{k-1}, x_k]_Y\) and \((x_i, x_{i+1})_Y\cap (x_{j}, x_{j+1})_Y=\varnothing\)  for all distinct \(i, j=0,\dots, k-1\). Hence, as all edges \(\{x_1, x_1'\}\), \(\{x_2, x_2'\}\in E\) are contained in some shortest path in \(T\), we obtain that \((x_1,x_1')_Y\cap (x_2, x_2')_Y=\varnothing\) whenever \(\{x_1, x_1'\}\neq \{x_2, x_2'\}\). Consequently, by the use of \eqref{eq:darstellung-Y}, 
\begin{equation}\label{eq:disjoint-union-1}
Y=X_T\cup \dot{\bigcup_{\{x, x'\}\in E}} (x,x')_Y.
\end{equation}
Now, suppose that \(f\colon X_T\to E\) is an \(L\)-Lipschitz map. We define \(\bar{f}\colon Y\to E\) as follows. If \(y\in [x,x']_Y\) with \(\{x,x'\}\in E\), then we put \(\bar{f}(y)=(1-t)f(x)+t f(x')\), where \(t\coloneqq d(y, x)/d(x,x')\). Because of \eqref{eq:disjoint-union-1}, \(\bar{f}\) is well-defined. By construction, \(\bar{f}|_X=f\) and a short computation reveals that \(\bar{f}\) is \(L\)-Lipschitz. This proves that \(\nu(X_T, Y)=1\), and so by virtue of Lemma~\ref{lem:reduction-to-inj-hull}, it follows that \(\absolutE(X_T)=1\), as desired.  
\end{remark}

\subsection{Lipschitz extension}
The following lemma is standard. It can be obtained by a straightforward application of a variant of McShane's extension theorem. 
\begin{lemma}\label{lem:co-is-finitely-injective}
Every \(1\)-Lipschitz map \(f\colon F\to c_0\) from a finite subset \(F\) of a metric space \(X\) admits a \(1\)-Lipschitz extension \(\bar{f}\colon X\to c_0\). 
\end{lemma}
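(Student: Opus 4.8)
The plan is to prove Lemma~\ref{lem:co-is-finitely-injective} by reducing the extension problem for a map into \(c_0\) to a sequence of independent scalar extension problems, one for each coordinate, and then solving each of those by McShane's extension theorem. The key structural observation is that a \(1\)-Lipschitz map \(f\colon F\to c_0\) is nothing but a sequence \(f=(f_n)_{n\in\N}\) of scalar functions \(f_n\colon F\to\R\), where the constraint that the image lies in \(c_0\) means \(f_n(x)\to 0\) as \(n\to\infty\) for each fixed \(x\in F\), and where the \(1\)-Lipschitz condition on \(f\) is equivalent to \(\sup_n \abs{f_n(x)-f_n(x')}\leq d(x,x')\) for all \(x,x'\in F\); in particular each coordinate function \(f_n\) is \(1\)-Lipschitz.

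First I would extend each coordinate \(f_n\colon F\to\R\) to a \(1\)-Lipschitz function \(\bar{f}_n\colon X\to\R\) by McShane's theorem (the version in \cite[Theorem 1.33]{doi:10.1142/9911}). The naive choice, however, need not land in \(c_0\): even though \(f_n(x)\to 0\) for each of the finitely many points \(x\in F\), there is no reason for the \emph{extended} values \(\bar{f}_n(y)\) at a point \(y\in X\setminus F\) to tend to \(0\) as \(n\to\infty\). This is the main obstacle, and it is exactly the reason one cannot merely quote McShane coordinatewise. I would resolve it by using a \emph{controlled} McShane extension that keeps each coordinate pinned near the values it already takes on \(F\). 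Concretely, fix a point \(x_0\in F\) and define
\[
\bar{f}_n(y)\coloneqq \max\Bigl(\,m_n,\ \min\bigl(M_n,\ \inf_{x\in F}\,(f_n(x)+d(y,x))\bigr)\Bigr),
\]
where \(M_n\coloneqq \max_{x\in F} f_n(x)\) and \(m_n\coloneqq \min_{x\in F} f_n(x)\). The inner \(\inf\) is the standard McShane extension of \(f_n\), which is \(1\)-Lipschitz and agrees with \(f_n\) on \(F\); truncating it by the constants \(m_n\) and \(M_n\) (both of which are attained on \(F\), hence preserve the values on \(F\)) remains \(1\)-Lipschitz since truncation by constants is \(1\)-Lipschitz, and forces \(m_n\leq \bar{f}_n(y)\leq M_n\) for every \(y\in X\).

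With this construction the decay is automatic: since \(F\) is finite and \(f_n(x)\to 0\) for each \(x\in F\), both \(M_n\to 0\) and \(m_n\to 0\) as \(n\to\infty\), whence \(\abs{\bar{f}_n(y)}\leq \max(\abs{m_n},\abs{M_n})\to 0\) uniformly in \(y\in X\); in particular \(\bar{f}(y)=(\bar{f}_n(y))_{n}\in c_0\) for every \(y\in X\), so \(\bar{f}\colon X\to c_0\) is well-defined. It remains to check that \(\bar{f}\) is \(1\)-Lipschitz as a map into \(c_0\): for \(y,y'\in X\) each coordinate satisfies \(\abs{\bar{f}_n(y)-\bar{f}_n(y')}\leq d(y,y')\) because each \(\bar{f}_n\) is \(1\)-Lipschitz, and taking the supremum over \(n\) gives \(\norm{\bar{f}(y)-\bar{f}(y')}_\infty\leq d(y,y')\). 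Finally \(\bar{f}|_F=f\) by construction, which completes the proof.
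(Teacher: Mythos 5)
Your proof is correct and follows essentially the same route as the paper: extend coordinatewise by McShane's theorem while controlling the sup norm of each coordinate, then use finiteness of \(F\) to conclude that the extended coordinates tend to \(0\) uniformly, so the extension lands in \(c_0\). The only difference is cosmetic: the paper invokes the norm-preserving variant of McShane's theorem (\(\norm{\bar{f}_i}_\infty=\norm{f_i}_\infty\), \cite[Theorem 1.33]{doi:10.1142/9911}) as a black box, whereas your truncation by \(m_n\) and \(M_n\) proves exactly that variant by hand.
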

\begin{proof}
 For each \(i\in \N\) let \(\pi_i\colon c_0\to \R\) denote the \(i\)th coordinate projection.
Using McShane's extension theorem (see, for example, \cite[Theorem 1.33]{doi:10.1142/9911}), for every \(i\in \N\) we find a \(1\)-Lipschitz extension \(\bar{f}_i\colon X\to \R\) of the map \(f_i\coloneqq \pi_i \circ f\) such that \(\norm{\bar{f}_i}_{\infty}=\norm{f_i}_\infty\). Since \(F\) is finite, it follows that \(\norm{f_i}_\infty \to 0\) as \(i\to \infty\). Hence, the map \(\bar{f}\colon X\to c_0\) given by \(x\mapsto (\bar{f}_i(x))\) is well-defined and a \(1\)-Lipschitz extension of \(f\), as desired. 
\end{proof}
 A map \(\varrho\colon X\to Y\) is called \textit{\(C\)-bilipschitz}, \(C\geq 1\), if there is a real number \(s>0\) such that
\[
s \,d(x,x') \leq d(\varrho(x), \varrho(x')) \leq C s \,d(x,x')
\]
for all points \(x\), \(x'\in X\). Two metric spaces \(X_1\) and \(X_2\) are called \textit{\(C\)-bilipschitz equivalent} if there exists a \(C\)-bilipschitz bijection \(\varrho\colon X_1\to X_2\). 
The following lemma is employed in the proof Proposition~\ref{prop:bounds-ae-sep}. Its proof boils down to a simple argument involving injective hulls.
\begin{lemma}\label{lem:bilip-esti}
One has
\[
\absolutE(X_1) \leq C \absolutE(X_2)
\]
whenever the metric spaces \(X_1\) and \(X_2\) are \(C\)-bilipschitz equivalent.
\end{lemma}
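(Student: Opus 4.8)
The plan is to reduce everything to extending maps into the injective hull $E(X_1)$ by means of Lemma~\ref{lem:reduction-to-inj-hull}, and then to transport Lipschitz extensions back and forth through the bilipschitz equivalence. To begin, I would fix a $C$-bilipschitz bijection $\varrho\colon X_1\to X_2$ with scaling factor $s>0$, so that $s\,d(x,x')\leq d(\varrho(x),\varrho(x'))\leq Cs\,d(x,x')$ for all $x,x'\in X_1$. This bound gives at once that $\varrho$ is $Cs$-Lipschitz and that $\varrho^{-1}\colon X_2\to X_1$ is $(1/s)$-Lipschitz.

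By Lemma~\ref{lem:reduction-to-inj-hull} it suffices to bound $\nu(X_1,E(X_1))$, so I would fix a Banach space $E$ and an $L$-Lipschitz map $f\colon X_1\to E$ and aim to produce an extension $\bar f\colon E(X_1)\to E$ whose Lipschitz constant is at most $C\,\absolutE(X_2)\,L$ (up to an arbitrary $\varepsilon$). First I push $f$ over to $X_2$ by setting $g\coloneqq f\circ\varrho^{-1}\colon X_2\to E$, which is $(L/s)$-Lipschitz. Since $\absolutE(X_2)=\nu(X_2,E(X_2))$, again by Lemma~\ref{lem:reduction-to-inj-hull}, for every $\varepsilon>0$ the map $g$ admits an extension $\bar g\colon E(X_2)\to E$ with $\Lip(\bar g)\leq(\absolutE(X_2)+\varepsilon)\,L/s$.

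Next I would transport $E(X_1)$ into $E(X_2)$. Regarding $\varrho$ as a $Cs$-Lipschitz map $X_1\to E(X_2)$ and invoking the injectivity of $E(X_2)$, I obtain a $Cs$-Lipschitz extension $\Phi\colon E(X_1)\to E(X_2)$ of $\varrho$. Setting $\bar f\coloneqq\bar g\circ\Phi$ then does the job: on $X_1$ one computes $\bar f(x)=\bar g(\varrho(x))=g(\varrho(x))=f(x)$, so $\bar f$ extends $f$, while the Lipschitz constants multiply to $Cs\cdot(\absolutE(X_2)+\varepsilon)\,L/s=C(\absolutE(X_2)+\varepsilon)\,L$, where the factor $s$ cancels. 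Letting $\varepsilon\to0$ gives $\nu(X_1,E(X_1))\leq C\,\absolutE(X_2)$, which is the assertion.

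The one point that requires genuine care — and the main obstacle — is the extension step producing $\Phi$: injectivity is phrased for $1$-Lipschitz maps, so I must first note that it upgrades to an $L$-Lipschitz extension property for $L$-Lipschitz maps (by rescaling the metric on the pair $X_1\subset E(X_1)$ by the factor $Cs$ and then applying the definition to the resulting nonexpansive map). Beyond that, the only thing to watch is bookkeeping: one has to thread the scaling factor $s$ through both extension steps and verify that it cancels, which it does precisely because $\varrho$ contributes a factor $Cs$ while $\varrho^{-1}$ contributes a factor $1/s$.
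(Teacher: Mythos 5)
Your proposal is correct and follows essentially the same route as the paper: reduce to $\nu(X_1,E(X_1))$ via Lemma~\ref{lem:reduction-to-inj-hull}, extend $\varrho$ to a map $E(X_1)\to E(X_2)$ using injectivity of $E(X_2)$, extend $g=f\circ\varrho^{-1}$ over $E(X_2)$, and compose. The only difference is cosmetic: you track the scaling factor $s$ explicitly, whereas the paper simply uses $\Lip(\varrho)\Lip(\varrho^{-1})\leq C$.
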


\begin{proof}
Suppose that \(X_1\) and \(X_2\) are \(C\)-bilipschitz equivalent via the bijection \(\varrho\colon X_1\to X_2\).   Since \(E(X_2)\) is injective, the map \(\varrho\colon X_1\to X_2\) admits a Lipschitz extension \(\bar{\varrho}\colon E(X_1)\to E(X_2)\) satisfying \(\Lip(\bar{\varrho})=\Lip(\varrho)\). Fix \(\varepsilon>0\) and set \(K_\varepsilon\coloneqq \nu(X_2, E(X_2))+\varepsilon\). Let \(f\colon X_1\to E\) be an \(L\)-Lipschitz map to a Banach space \(E\).  By definition of \(\nu(X_2, E(X_2))\), there exists a \(K_\varepsilon \Lip(g)\)-Lipschitz extension \(\bar{g}\colon E(X_2)\to E\) of the map \(g\coloneqq f \circ \varrho^{-1}\). We set \(\bar{f}\coloneqq \bar{g}\circ \bar{\varrho}\). The map \(\bar{f}\) extends \(f\) and we have
\[
\Lip(\bar{f})\leq \Lip(\bar{g})\Lip(\bar{\varrho}) \leq K_\varepsilon \Lip(f) \Lip(\varrho)\Lip(\varrho^{-1}).
\]
Notice that \(\Lip(\varrho)\Lip(\varrho^{-1})\leq C\). Hence, \(\Lip(\bar{f})\leq C K_\varepsilon \Lip(f)\). Consequently, we obtain 
\[
\nu(X_1, E(X_1))\leq C\big(\nu(X_2, E(X_2))+\varepsilon\big).
\]
By Lemma~\ref{lem:reduction-to-inj-hull}, \(\absolutE(X_1)=\nu(X_1, E(X_1))\) and \(\absolutE(X_2)=\nu(X_2, E(X_2))\). As \(\varepsilon>0\) was arbitrary, we infer \(\absolutE(X_1)\leq C \absolutE(X_2)\), as desired. 
\end{proof}

\subsection{Lipschitz-free spaces}
In what follows, we collect some elementary facts about Lipschitz-free spaces. We refer to the books \cite{doi:10.1142/9911} and \cite{MR3114782} for additional information on Lipschitz-free spaces. 
We will often use the following well-known universal property of Lipschitz-free spaces (see, for example, \cite[Theorem 3.6]{doi:10.1142/9911}).  

\begin{lemma}\label{lem:universal-property}
Let \((X, p)\) be a pointed metric space. If \(f\colon X\to E\) is a Lipschitz map into a Banach space satisfying \(f(p)=0\), then there exists a unique linear map \(\beta_f\colon \mathcal{F}(X)\to E\) such that \(\beta_f\circ \delta_X=f\). Moreover, one has \(\norm{\beta_f}=\Lip(f)\).
\end{lemma}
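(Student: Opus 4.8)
The plan is to construct \(\beta_f\) first on the dense subspace \(\spann\{\delta(x) : x\in X\}\subset \mathcal{F}(X)\) and then extend it by continuity to the closure \(\mathcal{F}(X)\); completeness of \(E\) guarantees the extension exists and lands in \(E\). \textbf{Uniqueness} is the easy part and I would dispatch it first: any bounded linear map out of \(\mathcal{F}(X)\) is determined by its values on the dense set \(\spann\{\delta(x):x\in X\}\), and the requirement \(\beta_f\circ\delta_X=f\) forces \(\beta_f(\delta(x))=f(x)\) for every \(x\in X\). By linearity this pins down \(\beta_f\) on the span, and by continuity on all of \(\mathcal{F}(X)\).

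For \textbf{existence}, I would first dispose of the degenerate case \(\Lip(f)=0\): then \(f\) is constant, and \(f(p)=0\) forces \(f\equiv 0\), so \(\beta_f=0\) works. Assuming \(\Lip(f)>0\), define \(\beta_f\) on the span by \(\beta_f\big(\sum_i a_i\,\delta(x_i)\big)\coloneqq\sum_i a_i\,f(x_i)\). Everything then hinges on the single estimate
\[
\norm{\textstyle\sum_i a_i\,f(x_i)}_E \;\leq\; \Lip(f)\cdot \norm{\textstyle\sum_i a_i\,\delta(x_i)}_{\mathcal{F}(X)},
\]
which simultaneously shows that \(\beta_f\) is well defined (the right-hand side vanishes whenever \(\sum_i a_i\,\delta(x_i)=0\)) and that it is bounded with \(\norm{\beta_f}\leq\Lip(f)\). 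To prove the estimate, write \(\mu\coloneqq\sum_i a_i\,\delta(x_i)\in\Lip_0(X)^\ast\) and test the vector \(\sum_i a_i\,f(x_i)\in E\) against an arbitrary functional \(\phi\in E^\ast\) with \(\norm{\phi}\leq 1\). The point is that \(\phi\circ f\colon X\to\R\) is a legitimate element of \(\Lip_0(X)\): it vanishes at \(p\) since \(\phi(f(p))=\phi(0)=0\), and \(\lipN{\phi\circ f}\leq\norm{\phi}\,\Lip(f)\leq\Lip(f)\). Consequently
\[
\phi\Big(\textstyle\sum_i a_i\,f(x_i)\Big)=\sum_i a_i\,(\phi\circ f)(x_i)=\mu(\phi\circ f),
\]
so that \(\bigl|\phi(\sum_i a_i f(x_i))\bigr|=\bigl|\mu(\phi\circ f)\bigr|\leq\norm{\mu}_{\mathcal{F}(X)}\,\lipN{\phi\circ f}\leq\Lip(f)\,\norm{\mu}_{\mathcal{F}(X)}\). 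Taking the supremum over all such \(\phi\) and using Hahn--Banach to realize the norm of \(\sum_i a_i f(x_i)\) in \(E\) yields the claimed inequality.

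With the estimate in hand, \(\beta_f\) is a well-defined bounded operator of norm at most \(\Lip(f)\) on the dense span, hence extends uniquely to \(\beta_f\colon\mathcal{F}(X)\to E\) with the same norm bound and still satisfying \(\beta_f\circ\delta_X=f\). For the \textbf{reverse inequality} \(\norm{\beta_f}\geq\Lip(f)\), I would use that \(\delta_X\) is an isometric embedding, so \(\norm{\delta(x)-\delta(x')}_{\mathcal{F}(X)}=d(x,x')\); then from \(\beta_f(\delta(x))=f(x)\),
\[
\norm{f(x)-f(x')}_E=\norm{\beta_f\big(\delta(x)-\delta(x')\big)}_E\leq\norm{\beta_f}\,d(x,x'),
\]
and dividing by \(d(x,x')\) and taking the supremum over \(x\neq x'\) gives \(\Lip(f)\leq\norm{\beta_f}\). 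Combining the two bounds yields \(\norm{\beta_f}=\Lip(f)\). The main obstacle is the displayed estimate in the middle paragraph; everything else is routine density-and-continuity bookkeeping. Its essential content is the duality trick of pairing \(\sum_i a_i f(x_i)\) with functionals \(\phi\in E^\ast\) and recognizing \(\phi\circ f\) as an admissible \(1\)-Lipschitz test function (after scaling) of the kind against which the free-space norm of \(\mu\) is defined.
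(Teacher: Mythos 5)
Your proof is correct, and it is essentially the standard argument: the paper itself does not prove this lemma but cites it from Weaver's book (\cite[Theorem 3.6]{doi:10.1142/9911}), where the proof proceeds exactly as you do --- define \(\beta_f\) on \(\spann\{\delta(x):x\in X\}\), obtain boundedness with \(\norm{\beta_f}\leq\Lip(f)\) by pairing against functionals \(\phi\in E^\ast\) and recognizing \(\phi\circ f\) as an element of \(\Lip_0(X)\) of Lipschitz norm at most \(\Lip(f)\), extend by density, and recover \(\norm{\beta_f}\geq\Lip(f)\) from the fact that \(\delta_X\) is isometric. Your observation that the single displayed estimate simultaneously yields well-definedness and boundedness is exactly the right way to organize the argument.
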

It is a simple consequence of the above that for any two points \(p\), \(p'\in X\) the Lipschitz-free spaces over \((X, p)\) and \((X, p')\) are linearly isometric.
Moreover, by considering \(E=\R\), it also follows directly from Lemma~\ref{lem:universal-property} that \(\Lip_0(X)\to \mathcal{F}(X)^\ast\) defined by \(f\mapsto \beta_f\) is a linear isometry. If \((X,p)\) is a pointed metric space consisting of \(n\geq 2\) points, then \(\Lip_0(X)\) is isomorphic \(\R^{n-1}\) in the sense of \(\R\)-vector spaces. Thus, as \(\Lip_0(X)\) is linearly isometric to \(\mathcal{F}(X)^\ast\), it follows that \(\mathcal{F}(X)\) is \((n-1)\)-dimensional and thus \(\{ \delta(x) : x\in X \text{ and } x\neq p\}\) is a basis of \(\mathcal{F}(X)\). Clearly, \(\delta(p)=0\). 

We close this section by stating two important results involving \(\mathcal{F}(X)\) when \(X\) is a finite metric space or a finite weighted tree, respectively. In Section~\ref{sec:three}, we need the following key fact.
\begin{lemma}\label{lem:translate-lip-to-proj}
One has
\[
\lambda_{\Lip}(X, Y)=\lambda(\mathcal{F}(X), \mathcal{F}(Y))
\]
whenever \(X\subset Y\) is a finite subset of a metric space \(Y\). 
\end{lemma}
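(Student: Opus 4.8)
The plan is to prove the identity by a duality argument, using crucially that finiteness of $X$ forces $\mathcal{F}(X)$, and hence $\Lip_0(X)=\mathcal{F}(X)^\ast$, to be finite-dimensional, so that $\mathcal{F}(X)$ is reflexive and $\Lip_0(X)^\ast=\mathcal{F}(X)^{\ast\ast}=\mathcal{F}(X)$. I would first set up the dictionary between the two sides. Recall from the preliminaries that $f\mapsto\beta_f$ identifies $\Lip_0(X)$ isometrically with $\mathcal{F}(X)^\ast$ and $\Lip_0(Y)$ with $\mathcal{F}(Y)^\ast$. Under these identifications, the restriction map $R\colon\Lip_0(Y)\to\Lip_0(X)$, $g\mapsto g|_X$, corresponds to the adjoint $\hat{\iota}^\ast$ of the canonical isometric embedding $\hat{\iota}\colon\mathcal{F}(X)\to\mathcal{F}(Y)$: indeed $\hat{\iota}^\ast(\beta_g)=\beta_g\circ\hat{\iota}$ sends $\delta(x)\mapsto\beta_g(\delta_Y(x))=g(x)$, which is exactly $\beta_{g|_X}$. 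A linear extension operator for $X\subset Y$ is then precisely a bounded linear $T\colon\Lip_0(X)\to\Lip_0(Y)$ with $R\circ T=\id_{\Lip_0(X)}$, while a relative projection is encoded by a bounded linear $P\colon\mathcal{F}(Y)\to\mathcal{F}(X)$ with $P\circ\hat{\iota}=\id_{\mathcal{F}(X)}$; since $\hat{\iota}$ is isometric, the idempotent $\hat{\iota}\circ P$ on $\mathcal{F}(Y)$ has $\norm{\hat{\iota}\circ P}=\norm{P}$, so that $\lambda(\mathcal{F}(X),\mathcal{F}(Y))=\inf\{\norm{P}:P\circ\hat{\iota}=\id_{\mathcal{F}(X)}\}$.

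For the inequality $\lambda_{\Lip}(X,Y)\leq\lambda(\mathcal{F}(X),\mathcal{F}(Y))$ I would start from a projection $P$ as above and put $T\coloneqq P^\ast\colon\Lip_0(X)\to\Lip_0(Y)$. Then $R\circ T=\hat{\iota}^\ast\circ P^\ast=(P\circ\hat{\iota})^\ast=\id_{\Lip_0(X)}$, so $T$ is a linear extension operator, and $\norm{T}=\norm{P^\ast}=\norm{P}$. Concretely, for $f\in\Lip_0(X)$ and $x\in X$ one computes $(P^\ast f)(x)=\beta_f(P\hat{\iota}\delta_X(x))=\beta_f(\delta_X(x))=f(x)$, confirming $(P^\ast f)|_X=f$. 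Taking the infimum over all projections $P$ yields this half.

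For the reverse inequality $\lambda(\mathcal{F}(X),\mathcal{F}(Y))\leq\lambda_{\Lip}(X,Y)$ I would take a linear extension operator $T$ and form its adjoint $T^\ast\colon\Lip_0(Y)^\ast\to\Lip_0(X)^\ast$. Since $\Lip_0(X)^\ast=\mathcal{F}(X)^{\ast\ast}=\mathcal{F}(X)$ by finite-dimensionality, and $\mathcal{F}(Y)$ embeds canonically in $\mathcal{F}(Y)^{\ast\ast}=\Lip_0(Y)^\ast$, I set $P\coloneqq T^\ast|_{\mathcal{F}(Y)}\colon\mathcal{F}(Y)\to\mathcal{F}(X)$, so that $\norm{P}\leq\norm{T^\ast}=\norm{T}$. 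To see that $P$ is a projection I check $P\circ\hat{\iota}=\id_{\mathcal{F}(X)}$ on the generators $\delta(x)$, $x\in X$: the element $P(\hat{\iota}\delta_X(x))=T^\ast(\delta_Y(x))$ pairs with any $f\in\Lip_0(X)$ to give $\langle Tf,\delta_Y(x)\rangle=(Tf)(x)=f(x)$, where the last equality uses that $T$ is an extension operator; this is exactly the pairing of $f$ with $\delta_X(x)$, so $P\hat{\iota}\delta_X(x)=\delta_X(x)$. By linearity $P\circ\hat{\iota}=\id_{\mathcal{F}(X)}$, and taking the infimum over all $T$ completes the proof.

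The step I expect to be the main obstacle is precisely this reverse direction, since a bounded operator $T$ into the dual space $\Lip_0(Y)=\mathcal{F}(Y)^\ast$ need not be weak\(^\ast\)-continuous and hence need not admit a preadjoint, so one cannot simply "dualize" $T$ to obtain $P$ directly. The finiteness of $X$ is what resolves this: rather than asking for a preadjoint of $T$, I form the always-available adjoint $T^\ast$ and restrict it to the canonical copy of the predual $\mathcal{F}(Y)$ sitting inside $\Lip_0(Y)^\ast$, and reflexivity of the finite-dimensional space $\mathcal{F}(X)$ guarantees that this restriction already takes values in $\mathcal{F}(X)$ itself. Everything else is a routine check on the spanning set $\{\delta(x):x\in X\}$ and the observation that all maps in sight are norm-preserving or norm-nonincreasing.
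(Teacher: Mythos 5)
Your proposal is correct and takes essentially the same route as the paper's own proof: both directions proceed by dualizing, taking \(T\coloneqq P^\ast\) for one inequality and \(P\coloneqq T^\ast|_{\mathcal{F}(Y)}\) for the other, with finiteness of \(X\) invoked exactly as you do to identify \(\Lip_0(X)^\ast\) with \(\mathcal{F}(X)\). Your write-up merely spells out the generator-level verifications and the weak\(^\ast\)-continuity subtlety that the paper leaves implicit.
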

\begin{proof}
Let \(T\colon \Lip_0(X)\to \Lip_0(Y)\) be a linear extension operator for \(X\subset Y\).
The adjoint \(T^\ast\colon \Lip_0(Y)^\ast \to \Lip_0(X)^\ast\) of \(T\) satisfies \(\norm{T^\ast}=\norm{T}\). Since \(X\) is finite-dimensional, we have \(\Lip_0(X)^\ast=\mathcal{F}(X)\), and so the restriction \(P\coloneqq T^\ast|_{\mathcal{F}(Y)}\) is a linear projection from \(\mathcal{F}(Y)\) onto \(\mathcal{F}(X)\). By construction, \(\norm{P}\leq \norm{T}\) and therefore \(\lambda(\mathcal{F}(X), \mathcal{F}(Y))\leq\lambda_{\Lip}(X, Y)\). To see the other inequality, notice that whenever \(P\colon \mathcal{F}(Y)\to \mathcal{F}(X)\) is a linear projection, then \(T\coloneqq P^\ast\) is a linear extension operator for \(X\subset Y\). Consequently, \(\lambda_{\Lip}(X,Y)\leq \lambda(\mathcal{F}(X), \mathcal{F}(Y))\). 
\end{proof}

Let \(T=(X, E, \omega)\) be a finite weighted tree with positive weights. We denote by \(d_T\colon X\times X \to \R\) the shortest-path metric on \(T\) induced by \(\omega\), that is, \(d_T(x,x)=0\) for all \(x\in X\) and for all distinct \(x\), \(x'\in X\),
\begin{equation}\label{eq:shortest-path}
d_T(x,x')=\sum_{i=0}^{k-1} \omega(\{x_i, x_{i+1}\})
\end{equation}
where \((x_0, \dots, x_k)\) is the shortest-path in \(T\) from \(x\) to \(x'\). Fix a basepoint \(p\in X\).
By abuse of notation, we write \(\mathcal{F}(T)\) to denote the Lipschitz-free space of the pointed metric space \((X, d_T, p)\). In \cite[Corollary 3.6]{MR2680057}, Godard proved that \(\mathcal{F}(T)\) is isometric to \(\ell_1^{\abs{T}-1}\). 
%Following \cite[Proposition 2.1]{MR4098600}, 
In Section~\ref{sec:four}, we need the following explicit construction of such an isometry.

\begin{lemma}\label{lem:explicit-isometry}
Let \(T=(X, E, \omega)\) be a finite weighted tree with positive weights. Fix a basepoint \(p\in X\), an enumeration \(\{f_1, \dots, f_N\}\) of \(E\), and \(\varepsilon_i\in \{-1,1\}\) for all \(i=1, \dots, N\). Let \(f\colon X\to \ell_1^N\) be defined by \(f(x)=(\xi_1, \dots, \xi_N)\) with
\[
\xi_i=
\begin{cases}
\varepsilon_i\,\omega_i & \text{ if \(f_i\) is an edge of the shortest path from \(p\) to \(x\)} \\
0 &\text{otherwise}.
\end{cases} 
\]
Then \(\beta_f\colon \mathcal{F}(T)\to \ell_1^N\) is a linear isometry if \(T\) is equipped with the shortest-path metric \(d_T\).
\end{lemma}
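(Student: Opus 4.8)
The plan is to show that $\beta_f$ maps the closed unit ball $B_{\mathcal{F}(T)}$ of $\mathcal{F}(T)$ exactly onto the closed unit ball $B_{\ell_1^N}$ of $\ell_1^N$; since both spaces have dimension $N=\abs{X}-1$, this forces $\beta_f$ to be a bijective linear isometry. First observe that the shortest path from $p$ to itself is empty, so $f(p)=0$, and Lemma~\ref{lem:universal-property} indeed furnishes a linear map $\beta_f\colon\mathcal{F}(T)\to\ell_1^N$ with $\beta_f\circ\delta_X=f$ and $\norm{\beta_f}=\Lip(f)$. Throughout, for vertices $a,b$ let $[a,b]$ denote the set of edges on the shortest path in $T$ from $a$ to $b$.

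The crux is to verify that $f$ is an isometric embedding of $(X,d_T)$ into $\ell_1^N$. Fix $x,y\in X$ and let $w$ be the vertex at which the shortest paths from $p$ to $x$ and from $p$ to $y$ diverge (the median of $p,x,y$ in the tree). Then $[p,x]$ is the disjoint union of $[p,w]$ and $[w,x]$, similarly $[p,y]=[p,w]\cup[w,y]$, and the edges in $[w,x]$ and $[w,y]$ occupy disjoint coordinates because $T$ has no cycles. Hence the contributions of the edges in $[p,w]$ cancel in $f(x)-f(y)$, leaving
\[
f(x)-f(y)=\sum_{f_i\in[w,x]}\varepsilon_i\omega_i e_i-\sum_{f_i\in[w,y]}\varepsilon_i\omega_i e_i,
\]
a sum over disjoint coordinates. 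Therefore $\norm{f(x)-f(y)}_1=\sum_{f_i\in[w,x]}\omega_i+\sum_{f_i\in[w,y]}\omega_i=d_T(x,w)+d_T(w,y)=d_T(x,y)$. In particular $\Lip(f)=1$, so $\norm{\beta_f}=1$ and $\beta_f$ carries $B_{\mathcal{F}(T)}$ into $B_{\ell_1^N}$.

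It remains to check the reverse inclusion $B_{\ell_1^N}\subseteq\beta_f(B_{\mathcal{F}(T)})$. The extreme points of $B_{\ell_1^N}$ are $\pm e_1,\dots,\pm e_N$. Fix $j$ and write $f_j=\{x,y\}$ with $x$ farther from $p$, so that $[p,x]=[p,y]\cup\{f_j\}$. By the computation above, $f(x)-f(y)=\varepsilon_j\omega_j e_j$, whence $\beta_f(m_{xy})=\varepsilon_j e_j$ and $\beta_f(m_{yx})=-\varepsilon_j e_j$, where $m_{xy}=\omega_j^{-1}(\delta(x)-\delta(y))$ satisfies $\norm{m_{xy}}_{\mathcal{F}(T)}=1$ and hence lies in $B_{\mathcal{F}(T)}$. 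As $\varepsilon_j\in\{-1,1\}$, both $e_j$ and $-e_j$ lie in $\beta_f(B_{\mathcal{F}(T)})$. Since $B_{\mathcal{F}(T)}$ is compact and convex, so is its image, and therefore $B_{\ell_1^N}=\conv\{\pm e_1,\dots,\pm e_N\}\subseteq\beta_f(B_{\mathcal{F}(T)})$.

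Combining the two inclusions yields $\beta_f(B_{\mathcal{F}(T)})=B_{\ell_1^N}$. Because $B_{\ell_1^N}$ is not contained in any proper subspace of $\ell_1^N$ while $\dim\mathcal{F}(T)=N$, the map $\beta_f$ can have no kernel, so it is a linear bijection with $\beta_f(B_{\mathcal{F}(T)})=B_{\ell_1^N}$; by homogeneity of the norm this gives $\norm{\beta_f(v)}_1=\norm{v}_{\mathcal{F}(T)}$ for every $v$, i.e. $\beta_f$ is a linear isometry. The only genuinely substantive step is the $\ell_1$-distance computation, which hinges on the fact that in a tree the two path-segments emanating from the median vertex use disjoint edges; the surjectivity and the passage from ``balls agree'' to ``$\beta_f$ is an isometry'' are routine.
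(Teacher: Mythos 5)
Your proof is correct and takes essentially the same route as the paper's: show that \(f\) is an isometric embedding (so \(\norm{\beta_f}=1\)), observe that the edge molecules \(m_{xy}\) are sent to \(\pm e_i\), and use the \(\ell_1\) geometry to conclude. The paper packages the final step as \(\norm{\beta_f^{-1}}\leq 1\) via the triangle inequality applied to the basis vectors \(e_i\), which is the same fact as your unit-ball inclusion \(B_{\ell_1^N}=\conv\{\pm e_1,\dots,\pm e_N\}\subseteq \beta_f\bigl(B_{\mathcal{F}(T)}\bigr)\); your median-vertex computation merely spells out what the paper calls ``readily verified.''
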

\begin{proof}
Notice that \(\norm{f(x)}_1=d_T(x,p)\) for all \(x\in X\). Thus, as \(T\) is a tree, it is readily verified that \(f\) is an isometric embedding if \(X\) is equipped with \(d_T\). Hence, \(\norm{\beta_f}=1\). Notice that if \(f_i=\{x,x'\}\) then \(e_i\in \ell_1^N\) is contained in the linear span of \(\beta_f(\delta(x)-\delta(x'))\). This implies that \(\beta_f\) is bijective. Letting \(\alpha_f\coloneqq\beta_f^{-1}\), it remains to show that \(\norm{\alpha_f}=1\). 
As \(f\) is an isometric embedding it is easy to check that \(\norm{\alpha_f(e_i)}=1\) for all \(i=1, \dots, N\). Consequently, for all \(v=(v_1, \dots, v_N)\in \ell_1^N\), one has \(\norm{\alpha_f(v)}=\norm{v_1\alpha_f(e_1)+\dotsm+v_N\alpha_f(e_N)}\leq \abs{v_1}+\dotsm +\abs{v_N}=\norm{v}_1\), and so \(\norm{\alpha_f}=1\), as desired.
%To show that \(\norm{(\beta_f)^{-1}}=1\) one can argue exactly as in \cite[Proposition 2.1]{MR4098600}. We omit the details. 
%%The proof is essentially contained in \cite[Proposition 2.1]{MR4098600}. We omit the details.
\end{proof}

\section{Linear and non-linear Lipschitz extension moduli}\label{sec:three}

In this section, we prove Theorems~\ref{thm:main1} and \ref{thm:main2} from the introduction. 
The following lemma relates linear and non-linear Lipschitz extension moduli when the source space \(X\) is finite. Lemma~\ref{lem:finishLine} is the main tool in our proof of Theorem~\ref{thm:main1}. 

\begin{lemma}\label{lem:finishLine}
Let \(Y\) be a metric space and \(X\subset Y\) a finite subset. Fix a basepoint \(p\in X\).
Then we have 
\[e(X,Y, \mathcal{F}(X), \delta_X)=\nu(X, Y)=\lambda_{\Lip}(X,Y)=\mathrm{\lambda}(\mathcal{F}(X), \mathcal{F}(Y)).\]
\end{lemma}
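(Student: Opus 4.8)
The plan is to route every quantity through the \emph{universal} \(1\)-Lipschitz map \(\delta_X\colon X\to\mathcal{F}(X)\), establishing the cycle
\[
e(X,Y,\mathcal{F}(X),\delta_X)\le \nu(X,Y)\le e(X,Y,\mathcal{F}(X),\delta_X)=\lambda(\mathcal{F}(X),\mathcal{F}(Y))=\lambda_{\Lip}(X,Y).
\]
The final equality is precisely Lemma~\ref{lem:translate-lip-to-proj} (the only place where the finiteness of \(X\) is needed), and the first inequality is immediate because the pair \((\mathcal{F}(X),\delta_X)\), with \(\delta_X(p)=\delta(p)=0\), is one admissible choice in the supremum defining \(\nu(X,Y)\). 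Everything else rests on the universal property (Lemma~\ref{lem:universal-property}): each Lipschitz map \(f\colon X\to E\) with \(f(p)=0\) factors as \(f=\beta_f\circ\delta_X\) with \(\norm{\beta_f}=\Lip(f)\). This factorization lets a \emph{single} extension of \(\delta_X\) control all targets simultaneously, which is exactly what sidesteps the detour through \(E^{\ast\ast}\) responsible for the constant \(K_{\mathcal{B}}\) in Theorem~\ref{thm:main1}.

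For \(\nu(X,Y)\le e(X,Y,\mathcal{F}(X),\delta_X)\), I would fix \(K>e(X,Y,\mathcal{F}(X),\delta_X)\) and choose a \(K\)-Lipschitz extension \(\bar{\delta}\colon Y\to\mathcal{F}(X)\) of \(\delta_X\). Given an arbitrary Banach space \(E\) and Lipschitz map \(f\colon X\to E\), I may assume \(f(p)=0\) after subtracting the constant \(f(p)\), as this changes neither \(\Lip(f)\) nor the extension problem. Then \(\bar{f}\coloneqq\beta_f\circ\bar{\delta}\) extends \(f\) and satisfies \(\Lip(\bar{f})\le\norm{\beta_f}\,\Lip(\bar{\delta})\le K\,\Lip(f)\), whence \(e(X,Y,E,f)\le K\). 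As \(E\) and \(f\) were arbitrary, \(\nu(X,Y)\le K\); letting \(K\) decrease to \(e(X,Y,\mathcal{F}(X),\delta_X)\) yields the claim.

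The heart of the argument is the identity \(e(X,Y,\mathcal{F}(X),\delta_X)=\lambda(\mathcal{F}(X),\mathcal{F}(Y))\), which I would derive from a bijective correspondence furnished by the universal property. To a \(K\)-Lipschitz extension \(\bar{\delta}\colon Y\to\mathcal{F}(X)\) of \(\delta_X\) (note \(\bar{\delta}(p)=0\)) Lemma~\ref{lem:universal-property} associates the linear map \(P\coloneqq\beta_{\bar{\delta}}\colon\mathcal{F}(Y)\to\mathcal{F}(X)\) with \(P\circ\delta_Y=\bar{\delta}\) and \(\norm{P}=\Lip(\bar{\delta})\le K\); this \(P\) is a projection onto \(\mathcal{F}(X)\) because it fixes the generators \(\delta_Y(x)=\hat{\iota}(\delta_X(x))\) of \(\mathcal{F}(X)\), since \(P(\delta_Y(x))=\bar{\delta}(x)=\delta_X(x)\). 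Conversely, any projection \(P\) onto \(\mathcal{F}(X)\) with \(\norm{P}\le K\) yields the \(K\)-Lipschitz extension \(\bar{\delta}\coloneqq P\circ\delta_Y\) of \(\delta_X\). By the uniqueness clause of the universal property these assignments are mutually inverse and satisfy \(\Lip(\bar{\delta})=\norm{P}\), so passing to infima gives \(e(X,Y,\mathcal{F}(X),\delta_X)=\lambda(\mathcal{F}(X),\mathcal{F}(Y))\).

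The step demanding the most care is this last correspondence: I must verify that \(P=\beta_{\bar{\delta}}\) genuinely takes values in \(\mathcal{F}(X)\) and restricts to the identity there, so that it is a bona fide relative projection, and that the identifications \(\delta_Y(x)=\hat{\iota}(\delta_X(x))\) and \(\mathcal{F}(X)\subset\mathcal{F}(Y)\) are used consistently throughout. Conceptually this is routine once the universal property is invoked; the genuinely new point is the observation in the first paragraph that channeling all data through \((\mathcal{F}(X),\delta_X)\) collapses the supremum over every pair \((E,f)\) into a single extension problem, thereby avoiding the loss incurred in the general framework of Theorem~\ref{thm:BB}.
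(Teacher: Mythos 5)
Your proof is correct and follows essentially the same route as the paper: both hinge on the universal property of \(\mathcal{F}(X)\) (Lemma~\ref{lem:universal-property}) to get \(\nu(X,Y)=e(X,Y,\mathcal{F}(X),\delta_X)\), both recover an extension of \(\delta_X\) from a projection by restricting to \(\delta_Y(Y)\subset\mathcal{F}(Y)\), and both close the cycle of equalities with Lemma~\ref{lem:translate-lip-to-proj}. The only real difference is in the remaining step: you observe that \(\beta_{\bar{\delta}}\) is itself a projection of \(\mathcal{F}(Y)\) onto \(\mathcal{F}(X)\) (yielding \(\lambda(\mathcal{F}(X),\mathcal{F}(Y))\leq e(X,Y,\mathcal{F}(X),\delta_X)\) directly), whereas the paper passes to the adjoint of \(\beta_{\bar{\delta}}\) to produce a linear extension operator (yielding \(\lambda_{\Lip}(X,Y)\leq e(X,Y,\mathcal{F}(X),\delta_X)\) instead) --- either way Lemma~\ref{lem:translate-lip-to-proj} then forces all four quantities to coincide, so this is a cosmetic reorganization rather than a different argument.
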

\begin{proof}
Recall that \(\delta\coloneqq \delta_X\colon X\to \mathcal{F}(X)\) is an isometric embedding. We set \(K\coloneqq e(X,Y,\mathcal{F}(X), \delta)\), that is, \(K\) denotes the infimum of those \(K^\prime \geq 1\) for which there exists a Lipschitz map \(\bar{\delta}\colon Y\to \mathcal{F}(X)\) extending \(\delta\) and satisfying \(\Lip(\bar{\delta}) \leq K^\prime\). By definition, \(K\leq\nu(X, Y)\). 
%On the other hand, \cite[Theorem 3.6]{doi:10.1142/9911} tells us that for every map Lipschitz map \(f\colon X\to E\) with \(f(p)=0\) there is a unique bounded linear map \(\beta_f\colon \mathcal{F}(X)\to E\) such that \(f=\beta_f\circ \delta\) and \(\norm{\beta_f}=\Lip(f)\), 
Due to Lemma~\ref{lem:universal-property}, for every Lipschitz map \(f\colon X\to E\) with \(f(p)=0\), the map \(\beta_f\colon \mathcal{F}(X)\to E\) satisfies \(f=\beta_f\circ \delta\) and \(\norm{\beta_f}=\Lip(f)\), and so \(e(X,Y,E)\leq e(X, Y, \mathcal{F}(X), \delta)=K\). Therefore, we infer \(\nu(X, Y)=K\). Next, we show that 
\begin{equation}\label{eq:chain-1}
\lambda_{\Lip}(X, Y)\leq K \leq \lambda(\mathcal{F}(X), \mathcal{F}(Y)).
\end{equation}
Since \(Y\subset \mathcal{F}(Y)\), we see that \(K\leq \lambda(\mathcal{F}(X), \mathcal{F}(Y))\). Now, let \(\bar{\delta}\colon Y\to \mathcal{F}(X)\) be a Lipschitz extension of \(\delta\). 
%Notice that since \(X\) is finite, \(\mathcal{F}(X)=\Lip_0(X)^\ast\). 
Let \(T\colon \Lip_0(X)\to \Lip_0(Y)\) denote the adjoint of \(\beta_{\bar{\delta}}\). By construction, \(T\) is a linear extension operator for \(X\subset Y\) and \(\norm{T}=\norm{\beta_{\bar{\delta}}}=\Lip(\bar{\delta})\). Hence, as \(\bar{\delta}\) was arbitrary, we obtain \(\lambda_{\Lip}(X,Y)\leq K\) and thereby \eqref{eq:chain-1} follows. By Lemma~\ref{lem:translate-lip-to-proj}, \(\lambda_{\Lip}(X,Y)=\lambda(\mathcal{F}(X), \mathcal{F}(Y))\). This completes the proof.
\end{proof}
We put
\[
e(X,Y,E)\coloneqq\sup\{ e(X,Y,E,f) : \text{\(f\) arbitrary}\}.
\]
The following lemma is well established. Variants of it appear at various places
in the mathematical literature (see, for example, \cite[Theorem 5]{lindenstrauss1964},  \cite[Lemma 1.1]{ball1992markov} or \cite[p. 168]{MR3108872}). 

\begin{lemma}\label{lem:reduction-to-finite-sets}
Let \(X\subset Y\) denote metric spaces and \(E\) a Banach space.
Then 
\begin{equation}\label{eq:very-helpful}
e(X, Y, E)\leq \nu(E, E^{\ast  \ast}) \, \sup_{F\subset Y} e(F\cap X, F, E^{\ast\ast}),
\end{equation}
where the supremum is taken over all finite subsets \(F\subset Y\).
Moreover, if \(X\) is finite, then
\begin{equation}\label{eq:very-helpful-2}
\nu(X, Y)\leq \sup_{X\subset F\subset Y} \nu(X, F),
\end{equation}
where the supremum is taken over all finite subsets \(F\subset Y\) with \(X\subset F\). 
\end{lemma}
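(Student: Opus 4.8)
The plan is to prove both inequalities by a single device: a compactness--patching argument over the directed family \(\Phi\) of finite subsets \(F\subset Y\), ordered by inclusion. Fix \(\varepsilon>0\). For each \(F\in\Phi\) one builds an approximate extension \(g_F\) defined on \(F\); for a fixed point \(y\), the values \(g_F(y)\) will range over a relatively compact set as \(F\) varies, so passing to an ultrafilter limit (equivalently, a convergent subnet) produces a global map on \(Y\) whose Lipschitz bound is inherited from the \(g_F\) by lower semicontinuity of the norm. The role of routing through either \(E^{\ast\ast}\) or \(\mathcal F(X)\) is precisely to make the relevant bounded sets compact.

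For \eqref{eq:very-helpful}, let \(f\colon X\to E\) be \(L\)-Lipschitz and write \(\iota\colon E\hookrightarrow E^{\ast\ast}\) for the canonical isometric embedding; set \(K_0\coloneqq\sup_{F}e(F\cap X,F,E^{\ast\ast})\) and assume \(K_0<\infty\) (otherwise there is nothing to prove). For each \(F\in\Phi\), the definition of \(e(F\cap X,F,E^{\ast\ast})\) yields an extension \(g_F\colon F\to E^{\ast\ast}\) of \(\iota f|_{F\cap X}\) with \(\Lip(g_F)\le (K_0+\varepsilon)L\). Fixing a basepoint \(p\in X\), the equality \(g_F(p)=f(p)\) (valid whenever \(p\in F\)) gives \(\norm{g_F(y)}\le (K_0+\varepsilon)L\,d(y,p)+\norm{f(p)}\), so for each \(y\) the values \(\{g_F(y)\}\) lie in a fixed norm ball of \(E^{\ast\ast}=(E^\ast)^\ast\), which is weak-\(\ast\) compact by Banach--Alaoglu. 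Choose an ultrafilter \(\mathcal U\) on \(\Phi\) containing every up-set \(\{F:F\supseteq F_0\}\); then for each \(y\) the set \(\{F:y\in F\}\) lies in \(\mathcal U\), and defining \(g(y)\) to be the weak-\(\ast\) limit of \(g_F(y)\) along \(\mathcal U\) produces \(g\colon Y\to E^{\ast\ast}\) that agrees with \(\iota f\) on \(X\) and satisfies \(\Lip(g)\le (K_0+\varepsilon)L\), the bound surviving the limit by weak-\(\ast\) lower semicontinuity of the norm. Finally, apply the definition of \(\nu(E,E^{\ast\ast})\) to the \(1\)-Lipschitz map \(\id_E\colon E\to E\): it extends to a Lipschitz retraction \(r\colon E^{\ast\ast}\to E\) with \(r|_E=\id_E\) and \(\Lip(r)\le \nu(E,E^{\ast\ast})+\varepsilon\). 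Then \(\bar f\coloneqq r\circ g\colon Y\to E\) extends \(f\) (since \(g=\iota f\) on \(X\) and \(r\) fixes \(E\)) with \(\Lip(\bar f)\le(\nu(E,E^{\ast\ast})+\varepsilon)(K_0+\varepsilon)L\); letting \(\varepsilon\to 0\) and taking the supremum over \(f\) gives \eqref{eq:very-helpful}.

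For \eqref{eq:very-helpful-2}, assume \(X\) finite and set \(K\coloneqq\sup_{X\subset F\subset Y}\nu(X,F)\). By Lemma~\ref{lem:finishLine} applied to each pair \(X\subset F\) and to \(X\subset Y\), we have \(\nu(X,Y)=e(X,Y,\mathcal F(X),\delta_X)\) and \(\nu(X,F)=e(X,F,\mathcal F(X),\delta_X)\), so it suffices to extend the single map \(\delta_X\colon X\to\mathcal F(X)\) to all of \(Y\). This is the same patching argument, but now the target \(\mathcal F(X)\) is finite-dimensional, so bounded sets are norm-compact and no bidual is needed: for each finite \(F\) with \(X\subset F\subset Y\), pick \(\bar\delta_F\colon F\to\mathcal F(X)\) extending \(\delta_X\) with \(\Lip(\bar\delta_F)\le K+\varepsilon\); observe \(\norm{\bar\delta_F(y)}\le(K+\varepsilon)\,d(y,p)\) for the basepoint \(p\), and take an ultrafilter limit to obtain \(\bar\delta\colon Y\to\mathcal F(X)\) extending \(\delta_X\) with \(\Lip(\bar\delta)\le K+\varepsilon\). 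Letting \(\varepsilon\to 0\) yields \(\nu(X,Y)=e(X,Y,\mathcal F(X),\delta_X)\le K\).

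The uniform norm bounds and the verification that the ultrafilter limit is again an extension are routine. The one genuine subtlety, which I expect to be the crux, lies in the limit step of \eqref{eq:very-helpful}: a general target \(E\) carries no compactness, so one must first enlarge to the dual space \(E^{\ast\ast}\) to invoke weak-\(\ast\) compactness, and then \emph{pay} for returning to \(E\) via the Lipschitz retraction furnished by \(\nu(E,E^{\ast\ast})\). The delicate points are ensuring that the Lipschitz constant is not lost under the weak-\(\ast\) limit and that the retraction fixes \(E\) exactly, so that \(\bar f\) genuinely restricts to \(f\) on \(X\).
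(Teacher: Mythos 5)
Your proposal is correct and follows essentially the same route as the paper: Ball's compactness--patching argument over finite subsets, routing through \(E^{\ast\ast}\) for weak-\(\ast\) compactness and paying \(\nu(E,E^{\ast\ast})\) via a retraction, then reducing the moreover part to extending \(\delta_X\) into \(\mathcal{F}(X)\) by Lemma~\ref{lem:finishLine}. The only cosmetic differences are that you take ultrafilter limits where the paper extracts a convergent subnet via Tychonoff, and that you invoke norm-compactness in the finite-dimensional \(\mathcal{F}(X)\) directly, whereas the paper runs the bidual argument and then notes \(J\colon\mathcal{F}(X)\to\mathcal{F}(X)^{\ast\ast}\) is an isometric isomorphism.
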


\begin{proof}
We follow closely the proof given in \cite[Lemma 1.1]{ball1992markov}, which is due to Ball.
Another approach is sketched in \cite[p. 168]{MR3108872}. We abbreviate 
\[
K\coloneqq\sup \big\{e(F\cap X, F, E^{\ast\ast}) : F\subset Y \text{ finite subset}\big\}.
\]
Fix a point \(p\in X\) and let \(f\colon X\to E\) be a Lipschitz map. Without loss of generality, we may suppose that \(f\) is \(1\)-Lipschitz and \(f(p)=0\). For each point \(x\in X\) we define the topological space
\[B_x\coloneqq\big\{ v\in E^{\ast\ast} : \norm{v} \leq K d(x,p)\big\}\]
endowed with the weak-\(^\ast\) topology,  
and we set
\[B\coloneqq\prod_{x\in X} B_x.\]
Let \(J\colon E\to E^{\ast\ast}\) denote the canonical embedding of \(E\) into \(E^{\ast\ast}\).
For each finite subset \(F \subset Y\) with 
\(p\in F\), there exists an extension \(\bar{f}_{F} \colon F \to E^{\ast\ast}\) of the map \((J \circ f)|_{F\cap X}\) such that \(\Lip(\bar{f}_{F}) \leq K \).  We define the the point \(z_{F}\in B\) via
\[(z_{F})_x=\begin{cases}
\bar{f}_{F}(x) &  \textrm{ if } x\in F, \\
0 &\textrm{ otherwise.}
\end{cases}\]
Since each \(B_x\) is compact, Tychonoff's theorem tells us that the net \((z_{F})\), where \(F \subset Y\) is a finite subset that contains \(p\), has a subnet converging to some \(z\in B\). It is not hard to check that \(\bar{f}\colon Y \to E^{\ast\ast}\) defined by \(x\mapsto z_x\) is a \(K\)-Lipschitz extension of \(J \circ f\). Fix \(\varepsilon>0\). By definition of \(\nu(E, E^{\ast\ast})\), there is a projection \(q\colon E^{\ast\ast}\to J(E)\) with \(\Lip(q)\leq \nu(E, E^{\ast\ast})+\varepsilon\).
Consequently, the map \(q\circ \bar{f}\) is a \(K(\nu(E, E^{\ast\ast})+\varepsilon) \)-Lipschitz extension of \(f\). As \(\varepsilon>0\) was arbitrary, this gives \eqref{eq:very-helpful}. Suppose now that \(X\) is finite. We set
\[
K'\coloneqq\sup \big\{e(X, F, \mathcal{F}(X)^{\ast \ast}) : F\subset Y \text{ finite subset with \(X\subset F\)}\big\}.
\]
By exactly the same reasoning as above, for any \(1\)-Lipschitz map \(f\colon X\to \mathcal{F}(X)\) with \(f(p)=0\) the map \(J\circ f\) admits an \(K'\)-Lipschitz extension \(\bar{f}\colon Y\to \mathcal{F}(X)^{\ast\ast}\). Since \(X\) is finite, it follows that the canonical embedding \(J\colon \mathcal{F}(X)\to \mathcal{F}(X)^{\ast \ast}\) is an isometric isomorphism, and so \(e(X, Y, \mathcal{F}(X))\leq K'\). Now,  Lemma~\ref{lem:finishLine} tells us that \(e(X,Y, \mathcal{F}(X))=\nu(X,Y)\) and \(e(X, F, \mathcal{F}(X)^{\ast \ast})=\nu(X,F)\). Hence, by the above, \eqref{eq:very-helpful-2} follows. This completes the proof. 
\end{proof}

Now we are in position to prove Theorem~\ref{thm:main1}.

\begin{proof}[Proof of Theorem~\ref{thm:main1}]
As
\[
e(F\cap X, F, E^{\ast\ast})\leq \nu(F\cap X,F)\leq \nu(F\cap X, Y),
\]
the upper estimate \eqref{eq:upper1} is a direct consequence of Lemmas \ref{lem:finishLine} and \ref{lem:reduction-to-finite-sets}. Fix a basepoint \(p\in X\) and suppose now that \(\Lip_0(X)^\ast\) is contained in \(\mathcal{B}\). 
Let \(\bar{\delta}\colon Y\to \Lip_0(X)^\ast\) be a Lipschitz extension of the evaluation map \(\delta\colon X\to \Lip_0(X)^\ast\) and 
\(T\colon \Lip_0(X)^{\ast\ast}\to \Lip_0(Y)\) the adjoint of \(\beta_{\bar{\delta}}\). By construction, \(T|_{\Lip_0(X)}\) is a linear extension operator for \(X\subset Y\) and \(\norm{T|_{\Lip_0(X)}} \leq \norm{T}=\norm{\beta_{\bar{\delta}}}=\Lip(\bar{\delta})\). Consequently, 
\[
\lambda_{\Lip}(X, Y) \leq e(X,Y,\Lip_0(X)^\ast, \delta) \leq \nu_{\mathcal{B}}(X,Y),
\]
as desired. Thus we are left to establish \eqref{eq:fin-dual}. Notice that \(\nu(E, E^{\ast\ast})=1\) for every dual space \(E\). Indeed, if \(E=F^\ast\), then the adjoint \(J^\ast\colon F^{\ast\ast\ast} \to F^\ast\) of the canonical linear embedding \(J\colon F\to F^{\ast\ast}\) is a norm-one projection of \(E^{\ast\ast}\) onto \(E\). Now, on account of \eqref{eq:upper1}, we infer
\begin{equation}\label{eq:one}
\nu_{\dual}(Y)\leq \sup_{\substack{F\subset Y \\}} \lambda_{\Lip}(F, Y) \leq \lambda_{\Lip}(Y),
\end{equation}
where the supremum is taken over all finite subsets \(F\subset Y\).
Since \(\Lip_0(X)^\ast\) is contained in \(\mathcal{B}_{\dual}\), we have \(\nu_{\dual}(X, Y) \geq \lambda_{\Lip}(X, Y)
\) for every subset \(X\subset Y\). Hence, 
\begin{equation}\label{eq:two}
\nu_{\dual}(Y)\geq \lambda_{\Lip}(Y).
\end{equation}
By combining \eqref{eq:one} with \eqref{eq:two}, we conclude 
\begin{equation}\label{eq:three}
\lambda_{\Lip}(Y)=\sup_{\substack{F\subset Y \\ }}\lambda_{\Lip}(F, Y)=\nu_{\dual}(Y).
\end{equation}
Since \(\Lip_0(F)^\ast\in \mathcal{B}_{\fin}\) for every finite subset \(F\subset Y\), by \eqref{eq:lower-111}, we have \(\nu_{\fin}(F, Y)\geq \lambda_{\Lip}(F,Y)\), and we find
\[
\nu_{\dual}(Y)\geq \nu_{\fin}(Y)\geq \sup_{F\subset Y} \nu_{\fin}(F, Y) \geq \sup_{F\subset Y} \lambda_{\Lip}(F,Y).
\]
Because of \eqref{eq:three}, this implies that \(\nu_{\dual}(Y)=\nu_{\fin}(Y)\), as was to be shown. 
\end{proof}

\begin{remark}\label{rem:bb}
By looking at \eqref{eq:one} and \eqref{eq:two} in the proof of Theorem~\ref{thm:main1}, we find 
\[
\lambda_{\Lip}(Y)=\sup_{F\subset Y } \lambda_{\Lip}(F, Y).
\]
where the supremum is taken over all finite subsets \(F\subset Y\).
This identity has also been obtained by Brudnyi and Brudnyi in \cite[Corollary 2.3]{MR2288741} and is a crucial tool in their proof of \cite[Theorem 1]{MR2340707}.
\end{remark}
To conclude this section we give the proof of Theorem~\ref{thm:main2}.

\begin{proof}[Proof of Theorem~\ref{thm:main2}]
From Theorem~\ref{thm:main1} we get \[
\absolutE(X)\geq \sup_{Y\supset X} \lambda_{\Lip}(X,Y),
\]
and by the moreover part of Lemma~\ref{lem:reduction-to-finite-sets}, we obtain
\[
\absolutE(X)\leq \sup_{F\supset X} \nu(X,F),
\]
where the supremum is taken over all finite metric spaces \(F\) containing \(X\).
Hence, thanks to Lemma~\ref{lem:finishLine},
\[
\absolutE(X)=\sup_{F} \lambda_{\Lip}(X,F)=\sup_{F} \lambda(\mathcal{F}(X),\mathcal{F}(F))
\]
and thus \eqref{eq:main-theorem14} follows. To finish the proof, we must show \eqref{eq:simple-consequence}. Let \(\iota\colon X\to c_0\) be an isometric embedding and denote by \(E(X)\) the injective hull of \(X\). 
%As \(E(X)\) is compact (see, for example, \cite[p. 310]{lang2013}), 
By virtue of Lemma~\ref{lem:co-is-finitely-injective}, there is a \(1\)-Lipschitz extension \(\bar{\iota}\colon E(X)\to c_0\) of \(\iota\). Notice that \(\nu(X, E(X))\leq \nu\bigl(\iota(X), \bar{\iota}\bigl(E(X)\bigr)\bigr)\), and so
\begin{equation}\label{eq:5}
\nu(X, E(X))\leq \nu(\iota(X), c_0).
\end{equation}
Lemma~\ref{lem:finishLine} tells us that \(\nu(\iota(X), c_0)=\lambda\bigl(\mathcal{F}(\iota(X)), \mathcal{F}(c_0)\bigr)\).
Hence, on account of \(\absolutE(X)=\nu(X, E(X))\), using \eqref{eq:5}, we find that \(\absolutE(X)=\lambda\bigl(\mathcal{F}(\iota(X)), \mathcal{F}(c_0)\bigr)\), concluding the proof.
\end{proof}

\section{Absolute Lipschitz extendability}\label{sec:four}

In this section, we prove the propositions appearing in Section~\ref{sec:applications}. The following lemma is the main tool in our proof of Proposition~\ref{prop:bounds-ae-sep}.

\begin{lemma}\label{lem:n-pods}
Let \(X\) be a non-empty metric space. Suppose that there is a constant \(C>0\) such that \(d(x,x')=2C\) for all \(x\), \(x'\in X\). Then 
\begin{equation}\label{eq:discrete-est}
\absolutE(X)\leq 2-2/\abs{X}
\end{equation}
with equality  whenever \(X\) is finite.
\end{lemma}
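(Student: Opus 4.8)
The plan is to prove the two inequalities separately, using the injective hull for the upper bound and an explicit ambient space together with the dictionary of Lemma~\ref{lem:finishLine} for the lower bound. Since $\absolutE(\cdot)$ is invariant under rescaling the metric, I may assume $C=1$, so that $d(x,x')=2$ for all distinct $x,x'\in X$.

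For the upper bound I would work with the \emph{pod} $P$ over $X$: a central point $o$ together with one segment (a ``leg'') of length $1$ attached to $o$ for each $x\in X$, with $x$ placed at the free end of its leg. A direct check shows that two points on the same leg at parameters $s,s'\in[0,1]$ (measured from the free end) are at distance $\abs{s-s'}$, while points $p_x(s)$ and $p_y(s')$ on distinct legs are at distance $(1-s)+(1-s')$; in particular $X\subset P$ isometrically. The space $P$ is a complete $\R$-tree and hence injective, so Lemma~\ref{lem:reduction-to-inj-hull} gives $\absolutE(X)=\nu(X,P)$. It then suffices to extend an arbitrary $1$-Lipschitz map $f\colon X\to E$ to $P$ with Lipschitz constant at most $r\coloneqq 2-2/\abs{X}$. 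I would first choose a value $v\in E$ for the centre with $\sup_x\norm{f(x)-v}\le r$: when $X$ is finite one takes the barycentre $v=\abs{X}^{-1}\sum_x f(x)$ and estimates $\norm{f(x)-v}\le\abs{X}^{-1}\sum_{x'\neq x}\norm{f(x)-f(x')}\le 2(\abs{X}-1)/\abs{X}=r$, whereas when $X$ is infinite one fixes a base point $x_0$ and takes $v=f(x_0)$, so that $\norm{f(x)-v}\le 2=r$. Extending $f$ by linear interpolation along each leg, $\bar f(p_x(s))=(1-s)f(x)+sv$, one has $\bar f(p_x(s))-v=(1-s)(f(x)-v)$, and the triangle inequality applied through the centre shows $\bar f$ is $r$-Lipschitz on same-leg and cross-leg pairs alike. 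This yields $\nu(X,P)\le r$.

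For the lower bound, assume $X=\{x_1,\dots,x_n\}$ is finite and let $Y_0\coloneqq X\cup\{o\}$ carry the star metric $d(o,x_i)=1$, $d(x_i,x_j)=2$; this is a finite weighted tree (a star with $n$ unit edges) containing $X$ isometrically, so $\absolutE(X)\ge\nu(X,Y_0)=\lambda(\mathcal F(X),\mathcal F(Y_0))$ by Lemma~\ref{lem:finishLine}. Taking $o$ as base point, Lemma~\ref{lem:explicit-isometry} identifies $\mathcal F(Y_0)$ isometrically with $\ell_1^n$ via $\delta(x_i)\mapsto e_i$, under which the canonical copy of $\mathcal F(X)$, namely the closed span of the differences $\delta(x_i)-\delta(x_j)$, becomes the zero-sum hyperplane $H=\{\xi\in\ell_1^n:\sum_k\xi_k=0\}$. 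Thus the claim reduces to the relative projection constant $\lambda(H,\ell_1^n)=2-2/n$. Every projection onto $H$ has the form $P\xi=\xi-\sigma(\xi)w$ with $\sigma(\xi)=\sum_k\xi_k$ and $w\in\ell_1^n$, $\sigma(w)=1$; since the extreme points of the unit ball of $\ell_1^n$ are $\pm e_i$, one gets $\norm{P}=\max_i\norm{e_i-w}_1$. The choice $w=n^{-1}(1,\dots,1)$ gives $\norm{P}=2-2/n$, and conversely, for any admissible $w$ one has $\sum_k\abs{w_k}\ge 1$, so choosing an index $i$ with $\abs{w_i}\le n^{-1}\sum_k\abs{w_k}$ yields $\norm{e_i-w}_1=\abs{1-w_i}+\sum_{k\neq i}\abs{w_k}\ge 2-2/n$. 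Hence $\lambda(H,\ell_1^n)=2-2/n$, which gives the matching lower bound and the asserted equality for finite $X$.

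The routine parts are the distance computation on the pod and the two Lipschitz estimates; the step requiring the most care is the projection-constant computation, and in particular the lower estimate $\norm{P}\ge 2-2/n$ valid for \emph{every} projection rather than merely for the symmetric one. A secondary point to handle cleanly is the base-point bookkeeping in the identification $\mathcal F(X)\cong H$: although $\mathcal F(X)$ is a priori defined with a base point in $X$ while $\mathcal F(Y_0)$ uses $o\notin X$, the canonical embedding $\mathcal F(X)\hookrightarrow\mathcal F(Y_0)$ is the closed span of the differences $\delta(x_i)-\delta(x_j)$, which under $\delta(x_i)\mapsto e_i$ is exactly $H$, so the mismatch of base points is immaterial.
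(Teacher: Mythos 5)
Your proof is correct, and while it follows the same overall skeleton as the paper's -- the star-shaped complete \(\R\)-tree (the paper's space \(W\)), reduction via injectivity and Lemma~\ref{lem:reduction-to-inj-hull}, the finite star \(Y_0=X\cup\{o\}\), and the identification \(\mathcal{F}(Y_0)\cong\ell_1^n\) carrying \(\mathcal{F}(X)\) onto the zero-sum hyperplane \(H\) -- it replaces both of the paper's external inputs with elementary arguments, which changes the character of the proof. For the upper bound, the paper handles infinite \(X\) by citing the theorem that any Lipschitz map between complete metric spaces extends to one extra point with constant doubled, and it obtains the finite case only implicitly, as one half of the equality chain \(\absolutE(X)=\nu(X,Y)=\lambda(\mathcal{F}(X),\mathcal{F}(Y))=\lambda(H,\ell_1^n)=2-2/n\), where the last step is Bohnenblust's classical theorem, cited without proof. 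You instead construct the extension directly: interpolating toward the barycenter \(v=\abs{X}^{-1}\sum_x f(x)\) (or toward \(f(x_0)\) when \(X\) is infinite) gives a \((2-2/\abs{X})\)-Lipschitz extension to the whole tree, which is exactly the nonlinear shadow of the symmetric projection \(\xi\mapsto\xi-\sigma(\xi)\,n^{-1}\mathbf{1}\). For the lower bound, where the paper again invokes Bohnenblust, you prove \(\lambda(H,\ell_1^n)\geq 2-2/n\) from scratch by parametrizing all projections as \(\xi\mapsto\xi-\sigma(\xi)w\) with \(\sigma(w)=1\), evaluating on the extreme points \(\pm e_i\), and picking an index where \(\abs{w_i}\) is at most the average; your estimate \(\norm{e_i-w}_1\geq 1+\sum_k\abs{w_k}-2\abs{w_i}\geq 2-2/n\) is sound (it uses \(n\geq 2\), which you may assume, as the one-point case is trivial). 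Your base-point bookkeeping for \(\mathcal{F}(X)\hookrightarrow\mathcal{F}(Y_0)\) is also handled correctly, matching the paper's choice of \(x_1\) as base point where \(\beta_f(\mathcal{F}(X))=\mathrm{span}\{e_1-e_i\}=H\). What the paper's route buys is brevity and the equality in a single stroke; what yours buys is a fully self-contained proof that exposes the geometric mechanism behind the constant \(2-2/n\) on both sides.
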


\begin{proof}
We may suppose that \(\abs{X}\geq 2\) and we fix an index set \(I\) such that \(X=\{x_i : i\in I\}\).
Further, without loss of generality we may assume that \(C=1\). Let \(W\) denote the metric space obtained by gluing the closed intervals \(I_i=[0,1]\subset \R\) along their origins. See Figure~\ref{fig:four-2} for an illustration when \(\abs{X}=5\).
\begin{figure}[h]
\centering
\includegraphics[scale=0.45]{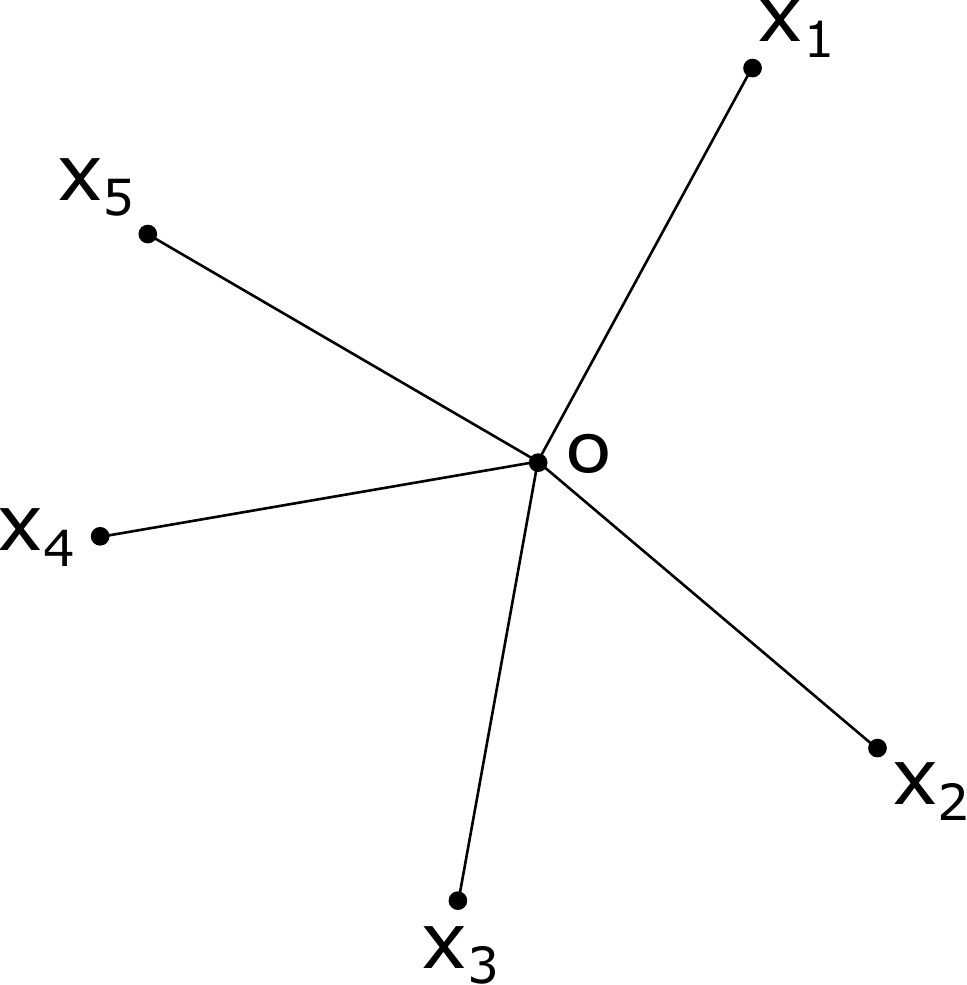}
\caption{Illustration of \(W\) when \(X=\{x_1, \dots, x_5\}\).}
\label{fig:four-2}
\end{figure}
Clearly, \(W\) is a complete \(\R\)-tree with internal vertex \(o\) and leaves which we identify with \(X\). Every complete \(\R\)-tree is injective and so, by Lemma~\ref{lem:reduction-to-inj-hull}, \(\absolutE(X)=\nu(X, W)\). 

We set \(Y\coloneqq X\cup \{o \}\subset W\).  It is easy to check that if \(f\colon Y\to E\) is a \(1\)-Lipschitz map, then the map \(\bar{f}\colon W\to E\)
which on every edge \([o,x]\subset W\) is given by \(w\mapsto (1-t) f(o)+t f(x)\) with \(t=d(o,w)/d(o,x)\) is a \(1\)-Lipschitz extension of \(f\). Therefore, \(\nu(Y, W)=1\) and we infer \(\absolutE(X)=\nu(X, Y)\).

By \cite[Theorem 1.1]{MR3898189}, any \(L\)-Lipschitz map between complete metric spaces can always be extended to one additional point such that the resulting map is \(2L\)-Lipschitz. As a result, \(\nu(X,Y)\leq 2\), which gives \eqref{eq:discrete-est} when \(X\) is infinite. 

Suppose now that \(X=\{x_1, \dots, x_n\}\) with \(n\geq 2\) and consider \(x_1\) as basepoint. 
Let \(f\colon Y\to \ell_1^n\) denote the map defined by \(f(x_1)=0\), \(f(x_i)=e_1-e_i\) for \(i=2, \dots, n\) and \(f(o)=e_1\). 
Due to Lemma~\ref{lem:explicit-isometry} the map \(\beta_f\colon \mathcal{F}(Y)\to \ell_1^n\) is a linear isometry. 
By construction, 
\(\beta_f(\mathcal{F}(X))=H\), where
\[
H\coloneqq\Big\{ v\in \ell_1^n : \sum_{i=1}^n v_i =0 \Big\},
\]
It is a classical fact due to Bohnenblust (see \cite[Section 5]{bohnenblust1938convex}) that  \(\lambda(H, \ell_1^n)=2-2/n\) for all \(n\geq 2\). As a result, we obtain \(\lambda(\mathcal{F}(X), \mathcal{F}(Y))=2-2/n\). By Lemma~\ref{lem:finishLine}, \(\nu(X, Y)=\lambda(\mathcal{F}(X), \mathcal{F}(Y))\), and so, by the above, 
\[
\absolutE(X)=\nu(X, Y)=\lambda(\mathcal{F}(X), \mathcal{F}(Y))=2-2/n,
\]
as desired.  
\end{proof}

Now we are in position to prove Proposition~\ref{prop:bounds-ae-sep}.

\begin{proof}[Proof of Proposition~\ref{prop:bounds-ae-sep}]
Suppose \(\rho(X)\neq \infty\), where 
\[
\rho(X)\coloneqq \frac{\diam X}{\sep X}.
\]
Let \(\delta\colon X\times X\to \R\) denote the metric given by \(\delta(x,x')\coloneqq \rho(X)\) for all distinct points \(x\), \(x'\in X\). We put \(X_\delta=(X, \delta)\). The identity map \(\id\colon (X,d)\to (X, \delta)\) is \(\rho(X)\)-bilipschitz. Consequently, using Lemma~\ref{lem:bilip-esti}, we find 
\begin{equation}\label{eq:distortion-1}
\absolutE(X)\leq \rho(X) \absolutE(X_\delta).
\end{equation}
As \(\delta\) is a discrete metric, Lemma~\ref{lem:n-pods} tells us that \(\absolutE(X_\delta)\leq 2-2/\abs{X}\). By combining this inequality with \eqref{eq:distortion-1}, we conclude \(\absolutE(X)\leq \rho(X)(2-2/\abs{X})\), as desired.
\end{proof}
Next, we proceed with the proof of Proposition~\ref{prop:ae3}.
\begin{proof}[Proof of Proposition \ref{prop:ae3}]
As pointed out in the introduction, \(\absolutE(n) \leq \lambda_{n-1}\). 
Thus, since \(\lambda_2=\frac{4}{3}\), as established by Chalmers and Lewicki in \cite{BruceL2010}, it follows that \(\absolutE(3)\leq \frac{4}{3}\). Now, Lemma~\ref{lem:n-pods} tells us that
for a three-point metric space \(X\) with \(d(x,x')=1\) for all distinct \(x\), \(x'\in X\), one has \(\absolutE(X)=2-\frac{2}{3}=\frac{4}{3}\). This completes the proof. 
\end{proof}

We are left to establish Proposition~\ref{prop:ae4}, whose proof is more involved than the previous ones. We follow the proof strategy outlined in Section~\ref{sec:applications}. To obtain the upper bound, we  need the following proposition:

\begin{proposition}\label{prop:injectiveHull}
For every four-point metric space \(X\), there exists a metric space \(Y\supset X\) with \(\abs{Y}\leq 8\) such that \(\absolutE(X)=\nu(X, Y)\).
\end{proposition}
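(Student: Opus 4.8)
The plan is to reduce to the injective hull and then to its $0$-skeleton. By Lemma~\ref{lem:reduction-to-inj-hull} we have $\absolutE(X)=\nu(X,E(X))$, so it suffices to produce an at most eight-point subset $Y$ of $E(X)$ with $X\subseteq Y$ and $\nu(X,E(X))=\nu(X,Y)$. Here I would invoke the structure theory of tight spans of four-point spaces (cf. \cite{lang2013}): since $2\cdot 2\le 4$, the complex $E(X)$ has dimension at most two, its cells are isometric to polytopes in $\ell_\infty^2$, and it has at most eight vertices, namely the four points of $X$ together with at most four further vertices bounding a single rectangular $2$-cell. I would then take $Y$ to be the vertex set of $E(X)$, so that $X\subseteq Y$ and $\abs{Y}\le 8$.

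It remains to prove $\nu(X,E(X))=\nu(X,Y)$. One inequality is free: since $Y\subseteq E(X)$, every extension to $E(X)$ restricts to an extension to $Y$ of no larger Lipschitz constant, so $\nu(X,Y)\le \nu(X,E(X))$. For the converse I would use Lemma~\ref{lem:finishLine} to replace the full non-linear modulus by a single extension problem, writing $\nu(X,E(X))=e(X,E(X),\mathcal{F}(X),\delta_X)$ and $\nu(X,Y)=e(X,Y,\mathcal{F}(X),\delta_X)$. Thus it is enough to start from an optimal extension $\bar\delta\colon Y\to \mathcal{F}(X)$ of $\delta_X$ to the vertices, with $\Lip(\bar\delta)=\nu(X,Y)$, and to extend it over all of $E(X)$ without increasing its Lipschitz constant. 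Over each $1$-cell this is routine: an edge is isometric to an interval carrying the path metric, so affine interpolation between the two endpoint values is $\Lip(\bar\delta)$-Lipschitz, and these interpolations are mutually compatible because the $1$-skeleton is $\R$-tree-like away from the $2$-cell.

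The crux — and the step I expect to be the main obstacle — is the extension over the unique $2$-cell $R$, a rectangle carrying the induced $\ell_\infty^2$-metric. This is genuinely delicate, because the four corners of $R$ need not form a tree-like four-point space, so that for \emph{arbitrary} Lipschitz data on the corners a norm-preserving extension over $R$ need not exist; extending general corner data costs a factor strictly larger than $1$. The point must therefore be that the corner values produced by an optimal vertex extension of the specific map $\delta_X$ lie in a constrained position, dictated by the legs feeding into the corners and by the geodesic structure of $E(X)$, which does admit a norm-preserving filling of $R$. A second route to the same conclusion, which I would develop in parallel as a safeguard, is to apply the finite-subset reduction of Lemma~\ref{lem:reduction-to-finite-sets} to write $\absolutE(X)=\sup_F \nu(X,F)$ over finite $F\supseteq X$, and to argue that, because $E(X)$ is at most two-dimensional with at most eight vertices, this supremum is already attained on the $0$-skeleton. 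Either way the problem is recast as a finite-dimensional relative projection-constant computation in $\mathcal{F}(Y)$, of dimension at most $7$, which is precisely what feeds the subsequent estimate $\absolutE(4)\le\bar{\lambda}(3,7)$.
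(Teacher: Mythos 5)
Your setup coincides with the paper's: reduce to the injective hull via Lemma~\ref{lem:reduction-to-inj-hull}, use Dress's description of \(E(X)\) as a rectangle with four legs (hence at most eight vertices), take \(Y\) to be the vertex set, note that \(\nu(X,Y)\leq \nu(X,E(X))\) is free, and reduce the converse to extending Lipschitz data from \(Y\) over \(E(X)\) without increasing the constant. But at the step you yourself single out as the crux --- filling in the \(2\)-cell --- your text stops being a proof and becomes a conjecture, and the diagnosis on which it rests is false. You claim that for \emph{arbitrary} Lipschitz data on the four corners of the rectangle a norm-preserving extension over the rectangle need not exist, so that one must exploit some special positioning of the corner values of an optimal extension of \(\delta_X\). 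No such special positioning is needed. The paper cuts the rectangle along the diagonal \(y_1y_3\) and extends affinely over each of the two resulting triangles, and this extension is \(K\)-Lipschitz for arbitrary \(K\)-Lipschitz values \(\bar f\) on \(Y\) and any Banach space target. The reason is the exact distance formula \eqref{eq:distance-2} in the \(\ell_1\)-plane: writing \(y=\beta_1y_1+\beta_3y_3+\beta_4y_4\), the sides \(y_1-y_4\) and \(y_3-y_4\) are axis-parallel of lengths \(w\) and \(\ell\), so \(d(y,y')=\abs{\beta_1-\beta_1'}\,w+\abs{\beta_3-\beta_3'}\,\ell\), while the affine interpolant satisfies \(\norm{\bar f(y)-\bar f(y')}\leq \abs{\beta_1-\beta_1'}\norm{\bar f(y_1)-\bar f(y_4)}+\abs{\beta_3-\beta_3'}\norm{\bar f(y_3)-\bar f(y_4)}\leq K\,d(y,y')\); pairs of points lying in different triangles, or on legs, are handled by concatenating along a geodesic through the diagonal or through a corner. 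Tree-likeness of the corner set is irrelevant here: only the rectangle's \emph{sides} enter the barycentric expansion, not its diagonals, so only side-length Lipschitz bounds are used.

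Your fallback route has the same gap in different clothing: Lemma~\ref{lem:reduction-to-finite-sets} gives \(\absolutE(X)\leq \sup_F \nu(X,F)\) over finite \(F\supset X\), but the assertion that this supremum is ``already attained on the \(0\)-skeleton'' of \(E(X)\) is precisely the statement to be proved, and dimension or vertex counting does not yield it. So the proposal is incomplete: everything except the \(2\)-cell extension agrees with the paper, and the \(2\)-cell extension --- which is the actual content of the proposition --- is left open, with the proposed way around it resting on an incorrect claim about what can go wrong.
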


\begin{proof}
Let \(X=\{x_1, \dots, x_4\}\) be a metric space consisting of four points. 
%We abbreviate \(\abs{x_i x_j}\coloneqq d(x_i, x_j)\). 
By relabeling the points if necessary, we can assume that
%\begin{align*}
%\abs{x_1 x_3}+\abs{x_2 x_4}&\geq \abs{x_1 x_2}+\abs{x_3 x_4},  \\
%\abs{x_1 x_3}+\abs{x_2 x_4}&\geq \abs{x_1 x_4}+\abs{x_2 x_3}.
%\end{align*} 
\begin{align*}
d(x_1, x_3)+d(x_2, x_4)&\geq d(x_1, x_2)+d(x_3, x_4),  \\
d(x_1, x_3)+d(x_2, x_4)&\geq d(x_1, x_4)+d(x_2, x_3).
\end{align*} 
By a result due to Dress (see \cite[Paragraph 1.16]{MR753872}), it follows that the distance matrix of \(X\) is of the following form:
\begin{equation}\label{eq:distance}
\begin{pmatrix}
0 & a+\ell+b & a+\ell+w+c & a+w+d \\
a+\ell+b & 0 & b+w+c & b+\ell+w+d  \\
a+\ell+w+c & b+w+c & 0 &c+\ell+d \\
a+w+d & b+\ell+w+d & c+\ell+d & 0
\end{pmatrix},
 \end{equation}
where \(a\), \(b\), \(c\), \(d\geq 0\) and \(\ell \geq w \geq 0\). Here, the \((i,j)\)-th entry of the above matrix is equal to \(d(x_i, x_j)\). Dress (see \cite[Paragraph 1.16]{MR753872}) showed that the injective hull \(E(X)\) of \(X\) is isometric to the metric space depicted in Figure~\ref{fig:four} considered as a subset of \(\ell_1^2=(\R^2, \norm{\cdot}_1)\). 
\begin{figure}[t]
\centering
\includegraphics[scale=0.45]{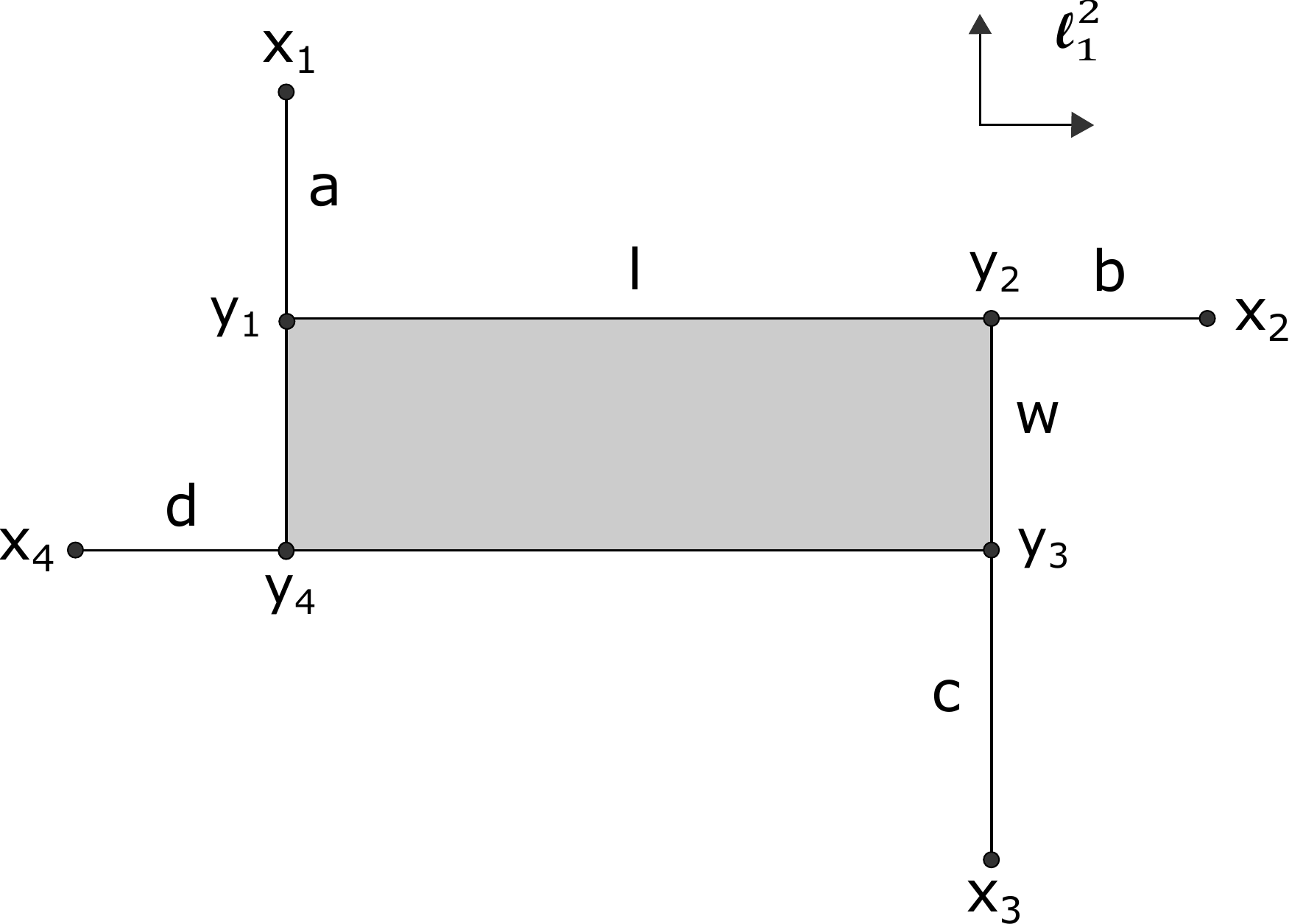}
\caption{The injective hull \(E(X)\subset \ell_1^2\) of a four-point metric space \(X=\{x_1, \dots, x_4\}\) with distance matrix \eqref{eq:distance}.}
\label{fig:four}
\end{figure}
Let \(y_1\in E(S)\subset \ell_1^2\) denote the unique point such that \(d(x_1,y_1)=a\), and define the points \(y_2\), \(y_3\), \(y_4\in E(X)\) analogously. We set \(Y\coloneqq X\cup\{y_1, \dots, y_4\}\subset E(X)\). 

We claim that \(\nu(X, Y)=\absolutE(X)\). As \(\absolutE(X)=\nu(X, E(X))\), to establish the claim it suffices to show that \(\nu(X, E(X))\leq \nu(X, Y)\). Let \(f\colon X\to E\) be a \(1\)-Lipschitz function to a Banach space \(E\) and denote by \(\bar{f}\colon Y\to E\) a \(K\)-Lipschitz extension of \(f\) to \(Y\), where \(K\coloneqq \nu(X,Y)\). We consider \(E(X)\) as a subset of \(\ell_1^2\) such that the sides of the rectangle with vertices \(y_i\) are parallel to the coordinate axes. We define the map \(\bar{f}\colon E(X)\to E\) as follows:
\begin{itemize}
    \item If \(y=(1-t)x_i+t y_i\) for some \(t\in [0,1]\), then we set \[
    \bar{f}(y)\coloneqq (1-t)\bar{f}(x_i)+t\bar{f}(y_i).\] 
    \item If \(y=\alpha_1 y_1+\alpha_2 y_2+\alpha_3 y_3\) with \(\alpha_i\geq 0\) and \(\alpha_1+\alpha_2+\alpha_3=1\), then we set
    \[
    \bar{f}(y)\coloneqq \alpha_1 \bar{f}(y_1)+\alpha_2 \bar{f}(y_2)+\alpha_3 \bar{f}(y_3).
    \]
    \item If \(y=\beta_1 y_1+\beta_3 y_3+\beta_4 y_4\) with \(\beta_i\geq 0\) and \(\beta_1+\beta_3+\beta_4=1\), then we set
    \[
    \bar{f}(y)\coloneqq \beta_1 \bar{f}(y_1)+\beta_3 \bar{f}(y_3)+\beta_4 \bar{f}(y_4). 
    \]
\end{itemize}
The map \(\bar{f}\) is well-defined and an extension of \(f\). In the following, we show that \(f\) is \(K\)-Lipschitz. Notice that if \(y=\beta_1 y_1+\beta_3 y_3+\beta_4 y_4\) and \(y'=\beta_1' y_1+\beta_3' y_3+\beta_4' y_4\), then
\[
y-y'=(\beta_1-\beta_1')(y_1-y_4)+(\beta_3-\beta_3')(y_3-y_4)
\]
and therefore 
\begin{equation}\label{eq:distance-2}
d(y,y')=\norm{y-y'}_1=\abs{\beta_1-\beta_1'}w+\abs{\beta_3-\beta_3'}\ell.
\end{equation}
Clearly,
\[
\norm{\bar{f}(y)-\bar{f}(y')}\leq \abs{\beta_1-\beta_1'}\,\, \norm{\bar{f}(y_1)-\bar{f}(y_4)}+\abs{\beta_3-\beta_3'}\,\,\norm{\bar{f}(y_3)-\bar{f}(y_4)};
\]
hence, by virtue of \eqref{eq:distance-2}, we have
\(\norm{\bar{f}(y)-\bar{f}(y')}\leq K d(y,y')\) for all points \(y\), \(y'\) contained in the convex hull of \(y_1, y_3, y_4\). 

Analogously, we find that \(\bar{f}\) is \(K\)-Lipschitz on the convex hull of the points \(y_1, y_2, y_3\). A straightforward computation shows that \(\bar{f}\) is \(K\)-Lipschitz on \(E(X)\), as desired. Since \(f\) and \(E\) were arbitrary, it follows that \(\nu(X, E(X))\leq \nu(X, Y)\), as was to be shown. 
\end{proof}

Finally, we give the proof of Proposition~\ref{prop:ae4}.

\begin{proof}[Proof of Proposition~\ref{prop:ae4}]
Let \(X\) be any four-point metric space. By Proposition~\ref{prop:injectiveHull}, there exists a metric space \(Y\supset X\) such that \(\abs{Y}\leq 8\) and \(\absolutE(X)=\nu(X, Y)\). Lemma~\ref{lem:finishLine} tells us that \(\nu(X, Y)=\lambda(\mathcal{F}(X), \mathcal{F}(Y))\). Consequently, as \(\dim \mathcal{F}(X)=3\) and \(\dim \mathcal{F}(Y)\leq 7\), we have \(\absolutE(X) \leq \bar{\lambda}(3,7)\),
where \(\bar{\lambda}(n,d)\) is defined as in \eqref{eq:def-of-bar-lambda}.
Using the upper bound \eqref{eq:koenig}, we arrive at
\[
\absolutE(4)\leq \frac{3+6 \sqrt{2}}{7},
\]
as desired.
%where the strict ineqality is due to the fact that \(\R^3\) does not admit a system of \(7\) distinct equiangular lines (see, for example, \cite[p. 496]{LEMMENS1973494}). 
Next, we deal with the lower bound. We abbreviate 
\[
\alpha\coloneqq\frac{1}{\sqrt{2}}+\frac{1}{2} \quad\text{ and }\quad\beta\coloneqq \frac{1}{\sqrt{2}}-\frac{1}{2}.
\]
We define the points \(x_1, x_2, x_3, x_4, y_1, y_2\in  \ell^2_\infty\) as follows:
\begin{align*}
x_1&\coloneqq(-\alpha, -1),& x_2&\coloneqq(-\alpha, 1),& x_3&\coloneqq(\alpha,1),& x_4&\coloneqq(\alpha, -1),\\
y_1&\coloneqq(-\beta, 0),& y_2&\coloneqq(\beta,0).
\end{align*}
The points \(x_1, \dots, x_4\) are the vertices of a rectangle with side lengths \(2\) and \(\sqrt{2}+1\). We set \(X\coloneqq\{x_1, \dots, x_4\}\subset \ell_\infty^2\) and \(Y\coloneqq X\cup\{ y_1, y_2\}\subset \ell_\infty^2\). By Theorem~\ref{thm:main2}, 
\[
\absolutE(X)\geq \lambda(\mathcal{F}(X), \mathcal{F}(Y)).
\]  
In the following, we show that \(\lambda(\mathcal{F}(X), \mathcal{F}(Y))\geq (5+4\sqrt{2})/7\). Let \(T=(Y, E)\) denote the graph with vertex set \(Y\) and edge set \(E=\{f_1, \dots, f_5\}\), where 
\begin{align*}
f_1&\coloneqq \{x_1, y_1\}, & f_2&\coloneqq \{x_2, y_1\}, & f_3&=\{y_1, y_2\}, \\
f_4&\coloneqq\{y_2, x_3\}, & f_5&\coloneqq \{y_2, x_4\}. 
\end{align*}
By construction, \(T\) is a tree, and every  \(x_i\) is a leaf of \(T\) and \(y_1\) and \(y_2\) are internal vertices of degree 3. Let \(d_T\) denote shortest-path metric on \(T\) induced by the weight function \(\omega\colon E\to (0, \infty)\) defined by \(\omega(\{x,y\})\coloneqq d(x,y)\). The identity map \(\id\colon (Y, d)\to (T, d_T)\) is an isometry. Fix \(x_1\) as a basepoint. Clearly, \(B=\{b_1, \dots, b_5\}\) with \(b_i=\delta(x_{i+1})\), for \(i=1,2,3\), and \(b_i=\delta(y_{i-3})\), for \(i=4,5\), is a basis of \(\mathcal{F}(T)\). Using this basis, let \(L\colon \mathcal{F}(T)\to \ell_1^5\) denote the linear map induced by the matrix 
\begin{equation*}
M\coloneqq\left[
\begin{array}{ccccc}
 1 & 1 & 1 & 1 & 1 \\
 1 & 0 & 0 & 0 & 0 \\
 0 & \gamma & \gamma & 0 & \gamma \\
 0 & 1 & 0 & 0 & 0 \\
 0 & 0 & 1 & 0 & 0 \\
\end{array}
\right],
\end{equation*}
where \(\gamma\coloneqq \sqrt{2}-1\). Notice that the map \(f\colon T\to \ell_1^5\) defined by \(f\coloneqq L\circ\delta\) satisfies the assumptions of Lemma~\ref{lem:explicit-isometry}.
Consequently, as \(L=\beta_f\), which is due to the uniqueness of \(\beta_f\), it follows from Lemma~\ref{lem:explicit-isometry} that \(L\) is a linear isometry. 

Let \(v_i\in \ell_1^5\) denote the \(i\)th column of the matrix \(M\). By definition of \(L\), one has \(L(\mathcal{F}(X))=E\), where \(E\subset \ell_1^5\) is the linear span of the vectors \(v_1\), \(v_2\) and \(v_3\). Hence, as \(L\) is a linear isometry, \(\lambda(\mathcal{F}(X), \mathcal{F}(Y))=\lambda(E, \ell_1^5)\). We set
\begin{equation}
    S\coloneqq\left[
\begin{array}{ccccc}
 1 & 1 & 1 & 1 & 1 \\
 1 & 1 & -1 & -1 & -1 \\
 1 & -1 & 1 & 1 & 1 \\
 1 & -1 & 1 & 1 & -1 \\
 1 & -1 & 1 & -1 & 1 \\
\end{array}
\right]
\end{equation}
and denote by \(D\in \mathcal{M}_5(\R)\) the diagonal matrix with diagonal \((\nu, \nu, \mu, \nu, \nu)\), where \(\mu\coloneqq (5-3\sqrt{2})/7\) and \(\nu\coloneqq (1-\mu)/4\). Via a direct computation, one can show that
\(u_1\coloneqq v_2+v_3-v_1\) is an eigenvector of \(DS\) with eigenvalue \(\lambda_1\coloneqq(\sqrt{2}+3)/7\), and the independent vectors \(u_2\coloneqq v_3-v_2\) and \(u_3\coloneqq v_1\),  are  eigenvectors of \(DS\) with eigenvalues \(\lambda_2=\lambda_3\coloneqq (3 \sqrt{2}+2)/14\). 

Since \(u_1, u_2, u_3\) is a basis of \(E\), we find that \(AP=PAP\), where \(A\coloneqq DS\) and \(P\colon \ell_1^5\to E\) denotes the transformation matrix of the orthogonal projection from \(\R^5\) onto \(E\subset \R^5\).
Notice that \(\Tr(D)=1\) and thus, by \eqref{eq:1-nuclear-norm}, it follows that \(\nu_1(A)=1\). By Proposition~\ref{prop:rel-pro-via-trace-duality},
\[
\lambda(E, \ell_1^5)\geq \Tr(AP)=\sum_{i=1}^3 \frac{1}{\norm{u_i}^2_2} \Tr( A u_i u_i^T)=\sum_{i=1}^3 \lambda_i=\frac{5+4\sqrt{2}}{7}, 
\]
where we have used that the vectors \(u_i\) are orthogonal to each other. 
%In \cite{CHALMERS2009553}, Chalmers and Lewicki showed that \(\lambda(3,5)=(5+4\sqrt{2})/7\). 
%Thus, by the above, it follows that \(\lambda(E, \ell_1^5)=\lambda(3,5)\), as desired. 
Using the identity \(\lambda(\mathcal{F}(X), \mathcal{F}(Y))=\lambda(E, \ell_1^5)\), we conclude that
\[
\absolutE(4)\geq \absolutE(X)\geq \lambda(\mathcal{F}(X), \mathcal{F}(Y))\geq\frac{5+4\sqrt{2}}{7}.
\]
This completes the proof. 
\end{proof}

\begin{remark}
Given a matrix \(A\), the matrix \(\textrm{Sgn}(A)=(b_{ij})\) defined by \(b_{ij}\coloneqq \sgn(a_{ij})\) is called \textit{sign pattern matrix} of \(A\). The matrix \(S\) appearing in the proof of Proposition~\ref{prop:ae4} is the sign pattern matrix of the matrix \(P\). In general, given a linear projection \(P\colon \ell_1^d\to E\), good candidates for a matrix \(A\in \mathcal{M}_d(\R)\) with  \(\Tr(AP)=\lambda(E, \ell_1^d)\) are matrices of the form \(A=D \textrm{Sgn}(P)\), where \(D\) is a diagonal matrix with positive entries. 
\end{remark}

\section{Projection constants and linear programming}

\subsection{Banach space theory}

In the following, we recall standard concepts from Banach space theory. For each integer \(d \geq 1\) let \(\mathcal{M}_{d}(\mathbb{R})\) denote the real vector space of all \(d\times d\) matrices with real entries. For every linear map \(f\colon \mathcal{M}_{d}(\mathbb{R})\to \R\) there exists a unique matrix \(A\in \mathcal{M}_{d}(\mathbb{R})\) such that
\begin{equation*}
f(X)=\Tr(AX) \quad \text{ for all } X\in \mathcal{M}_{d}(\mathbb{R}).
\end{equation*}
For every Banach space \(E=(\R^d, \norm{\cdot}_{_E})\) we let \(\mathcal{L}(E)=(\mathcal{M}_d(\R), \norm{\cdot}_{\mathcal{L}(E)})\) denote Banach space of all linear operators from \(E\) to \(E\) equipped with the operator norm \(\norm{\cdot}_{\mathcal{L}(E)}\). The \textit{\(1\)-nuclear norm} \(\nu_1(A)\) of \(A\in \mathcal{L}(E)\) is defined as follows
\[
\nu_1(A)\coloneqq\inf\Big\{ \sum_{i=1}^m \norm{f_i}_{_{E^\ast}} \norm{x_i}_{_E} : m\geq 1, A=\sum_{i=1}^m f_i \otimes x_i\Big\}. 
\]
 It is well-known that the dual space of \(\mathcal{L}(E)\) can be identified with \((\mathcal{M}_d(\R), \nu_1(\cdot))\), and so
 \begin{equation*}
\nu_1(A)=\max\big\{ \Tr(AX) \,:\,  X\in \mathcal{M}_{d}(\mathbb{R}) \textrm{ with }\norm{X}_{\mathcal{L}(E)}=1\big\} 
\end{equation*}
for all \(A\in \mathcal{M}_d(\R)\). If \(E=\ell_1^d\), then 
\begin{equation}\label{eq:1-nuclear-norm}
\nu_1(A)=\sum_{i=1}^d \max_{j} \, \abs{a_{ij}}
\end{equation}
for all \(A\in \mathcal{M}_d(\R)\). 

Given a Banach space \(E\), we denote by \(\ext B_E\) the set of extreme points of the closed unit ball \(B_E\) of \(E\). A finite-dimensional Banach space \(E\) is said to be \textit{polyhedral}, 
if \(\ext B_{E^\ast}\) is finite.  
If \(E\) is polyhedral and \(S\coloneqq\{f_1, \dots, f_d\}\subset \ext B_{E^\ast}\) is a subset such \(S\cup -S=\ext B_{E^\ast}\) and \(\abs{\ext B_{E^\ast}}=2d\), then \(\alpha\colon E\hookrightarrow \ell_\infty^d\) defined by \(x\mapsto (f_i(x))\) is a linear isometric embedding. Conversely, every finite-dimensional linear subspace of \(\ell_\infty^d\) is polyhedral.
If \(F=(\R^d, \norm{\cdot}_{_F})\) is a polyhedral Banach space, then \(\mathcal{L}(F)\) is polyhedral as well. In fact, \(\ext B_{\mathcal{L}(F)^\ast}\) is equal to \(\ext B_{F^\ast} \otimes \ext B_F\) (see \cite{MR682665}). 

\subsection{Computation of projection constants via linear programming}

The aim of this section is to establish a characterization of \(\lambda(E,F)\) as the optimal value of a certain linear programming problem (see Lemma~\ref{lem:lin-prog}). 

Let \(F=(\R^d,\norm{\cdot}_F)\) be a polyhedral Banach space and \(E\subset F\) a nontrivial linear subspace that is not equal to \(F\). Suppose that \(u_r\in \R^d\), \(r=1, \dots, n\), is an orthonormal basis of \(E\subset \R^d\) and \(v_s\in\R^d\), \(s=1, \dots, d-n\), an orthonormal basis of its orthogonal complement. 
%Suppose that \(u_r\in \R^d\), \(r=1, \dots, n\), is a basis of \(E\) such that the matrix \(U\in \mathcal{M}_{d\times n}(\R)\) 
%defined by \(u_{ij}\coloneqq (u_j)_i\) satisfies \(U^T U=\mathbbm{1}_n\). Let \(V\in \mathcal{M}_{d\times (d-n)}(\R)\) be any matrix such that \([U \,|\, V]\) is an orthogonal matrix. 
Fix an enumeration \(\{B_1, \dots, B_p\}\) of 
\[
\big\{u_{r^{\phantom{1}}}\hspace{-0.25em}u_{r'}^T\big\}\cup\big\{v_{\hspace{-0.1em}{s^{\phantom{1}}}}\hspace{-0.25em}v_{\hspace{-0.1em}s'}^T\big\}\cup\big\{u_{r^{\phantom{1}}}\hspace{-0.35em}v_{\hspace{-0.1em}s}^T\big\}
%\big\{u_iu_j^T\big\}\cup \big\{u_iv_j^T\big\}\cup \big\{v_iv_j^T\big\}
\]
such that \(B_i=u_i u_i^T\) for all \(i=1, \dots, n\) and fix an enumeration \(\{E_1, \dots, E_{q}\}\) of the extreme points of the closed unit ball of \(\mathcal{L}(F)\). 

\begin{lemma}\label{lem:lin-prog}
We define the matrix \(A\in \mathcal{M}_{p\times q}(\R)\)  via \(a_{ij}=\Tr(B_iE_j)\) and the vector \(b\in \R^p\) by setting \(b_i=1\) for all \(i=1, \dots, n\) and \(b_i=0\) otherwise. Let \(j\in \R^q\) denote the all-ones vector. Then the linear programming problem
\begin{align*}\label{eq:programming}
\text{minimize} \quad
j^Tx& \\
\text{subject to} \quad
Ax &= b \nonumber \\
x &\geq 0 \nonumber
\end{align*}
is solvable and its optimal value is equal to \(\lambda(E,F)\). 
\end{lemma}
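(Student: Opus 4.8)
The plan is to translate the program into the geometric problem defining $\lambda(E,F)$: I will show that its feasible points correspond exactly to the linear projections of $F$ onto $E$, and that the objective $j^{T}x$ measures the $\mathcal{L}(F)$-operator norm of the associated projection. The first ingredient is a gauge identity. Since $F$ is polyhedral, so is $\mathcal{L}(F)$, hence its closed unit ball is the convex hull $\conv\{E_1,\dots,E_q\}$ of its finitely many extreme points; as this ball is symmetric, each $-E_j$ again occurs among the $E_j$. A routine Minkowski-gauge argument then yields, for every $M\in\mathcal{M}_d(\R)$,
\[
\min\Big\{\sum_{j=1}^q x_j \;:\; x\geq 0,\ \sum_{j=1}^q x_j E_j = M\Big\}=\norm{M}_{\mathcal{L}(F)}.
\]
Thus, writing $M_x\coloneqq\sum_j x_jE_j$ for a nonnegative $x$, we always have $j^{T}x\geq \norm{M_x}_{\mathcal{L}(F)}$, with equality attainable for each target $M$.

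The second, and central, ingredient is decoding the constraint $Ax=b$. By definition $a_{ij}=\Tr(B_iE_j)$, so $Ax=b$ is equivalent to $\Tr(B_iM_x)=b_i$ for all $i$. Putting $U\coloneqq[u_1|\cdots|u_n]$ and $V\coloneqq[v_1|\cdots|v_{d-n}]$, so that $[U|V]$ is orthogonal, one computes $\Tr(u_ru_{r'}^{T}M)=\langle Mu_r,u_{r'}\rangle$, $\Tr(v_sv_{s'}^{T}M)=\langle Mv_s,v_{s'}\rangle$ and $\Tr(u_rv_s^{T}M)=\langle Mu_r,v_s\rangle$. Because $b_i=1$ precisely on the diagonal generators $B_i=u_iu_i^{T}$ and $b_i=0$ on every other generator, the constraints say exactly
\[
U^{T}MU=I_n,\qquad V^{T}MU=0,\qquad V^{T}MV=0,
\]
while the block $U^{T}MV$ is left entirely unconstrained. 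These three equations are equivalent to $MU=U$ (that is, $M|_E=\id_E$) together with $V^{T}M=0$ (that is, $\operatorname{range}(M)\subseteq E$); in other words $M_x$ is a linear projection of $F$ onto $E$, and conversely every such projection satisfies them. This is the crux: the generating set $\{u_ru_{r'}^{T}\}\cup\{v_sv_{s'}^{T}\}\cup\{u_rv_s^{T}\}$ is rigged to pin down three of the four blocks of $M$ while leaving the $E^\perp\to E$ block free --- precisely the degrees of freedom parametrizing all projections onto $E$.

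Assembling the two ingredients finishes the argument. If $x$ is feasible, then $M_x$ is a projection onto $E$, so $j^{T}x\geq\norm{M_x}_{\mathcal{L}(F)}\geq\lambda(E,F)$. Conversely, the map $Q\mapsto\norm{P_0+Q}_{\mathcal{L}(F)}$ is continuous and coercive on the closed affine set of projections onto $E$ (here $P_0=UU^{T}$ is the orthogonal projection), so it attains its minimum $\lambda(E,F)$ at some projection $P^{*}$; representing $P^{*}=\sum_jx_j^{*}E_j$ with $x^{*}\geq 0$ and $\sum_jx_j^{*}=\norm{P^{*}}_{\mathcal{L}(F)}$ produces a feasible point with $j^{T}x^{*}=\lambda(E,F)$. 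Hence the program is feasible, its objective is bounded below by $0$, and its optimal value equals $\lambda(E,F)$, the minimum being attained.

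I expect the only delicate point to be confirming that the block computation characterizes projections \emph{exactly} --- neither more nor fewer matrices --- and in particular that the absence of any constraint on $U^{T}MV$ is exactly what is required; the gauge identity and the existence of a norm-minimizing projection are standard. As a sanity check one may instead pass to the LP dual, which maximizes $\Tr(NP_0)$ over matrices $N$ with $N(E)\subseteq E$ and $\nu_1(N)\leq 1$, i.e.\ the trace-duality expression for $\lambda(E,F)$, and then invoke LP strong duality.
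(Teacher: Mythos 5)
Your proof is correct, but it takes a genuinely different route from the paper. You analyze the stated (primal) program directly: the Minkowski--gauge identity \(\min\bigl\{j^Tx : x\geq 0,\ \sum_j x_jE_j=M\bigr\}=\norm{M}_{\mathcal{L}(F)}\), the block computation showing that \(Ax=b\) holds exactly when \(M_x=\sum_j x_jE_j\) restricts to the identity on \(E\) and has range inside \(E\) (the unconstrained block \(U^TM V\) being precisely the freedom that parametrizes all projections onto \(E\)), and the attainment of \(\lambda(E,F)\) by a minimal-norm projection; your block equivalence and the gauge argument both check out. The paper argues in the opposite direction: it views the stated program as the LP dual of \(\max\{b^Tx : A^Tx\leq j\}\), identifies vectors \(x\in\R^p\) with matrices \(B=\sum_i x_iB_i\) satisfying \(BP=PBP\), observes that \(\Tr(BP)=b^Tx\) and that \(\nu_1(B)\leq 1\) is equivalent to \(A^Tx\leq j\), invokes the trace-duality formula of Proposition~\ref{prop:rel-pro-via-trace-duality} (proved via Hahn--Banach) to conclude that this maximization problem has value \(\lambda(E,F)\), and then applies the strong duality theorem of linear programming to transfer solvability and the optimal value to the stated program. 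So the dual description you mention only as a ``sanity check'' at the end is in fact the paper's entire argument. What your approach buys: it is self-contained and elementary, needing neither Proposition~\ref{prop:rel-pro-via-trace-duality} nor the strong duality theorem, and it yields solvability (attainment of the minimum) directly rather than as an output of LP duality. What the paper's approach buys: brevity, given that Proposition~\ref{prop:rel-pro-via-trace-duality} has to be proved anyway (it is used independently to obtain the lower bound in Proposition~\ref{prop:ae4}), and an explicit dual certificate picture that is exploited there as well.
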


We suggest the book \cite{matousek2007understanding} for an introduction to linear programming. A few remarks are in order. 
\begin{itemize}
\item Lemma~\ref{lem:lin-prog} is particularly useful when \(F=\ell_\infty^d\). In this case \(\lambda(E, F)=\lambda(E)\) for every subspace \(E\subset F\). This is a direct consequence of the fact that \(\ell_\infty^d\) is an injective Banach space for every \(d\geq 1\). 
\item In the special case when \(F=\ell_\infty^d\), the set of extreme points \(\ext B_{\mathcal{L}(F)}\) can be characterized as follows: \(M \in \mathcal{M}_d(\R)\) is contained in \(\ext B_{\mathcal{L}(F)}\) if and only if for each \(i\in \{1, \dots, d\}\) there exists a unique \(j\in\{1, \dots, d\}\) such that \(\abs{m_{ij}}=1\) and \(m_{ik}=0\) for all \(k\neq j\). Hence, the set \(\ext B_{\mathcal{L}(\ell_\infty^d)}\) contains exactly \(2^{d+1} d^d\) matrices. 

\item For an arbitrary polyhedral Banach space \(F\), computing the extreme points of the closed unit ball of \(\mathcal{L}(F)\) is not an easy task. 
Since \(\ext B_{\mathcal{L}(F)^\ast}=\ext B_{F^\ast} \otimes \,\ext B_F\), the \(V\)-representation of the convex polytope
\[
\mathcal{P}\coloneqq B_{\mathcal{L}(F)^\ast}\subset \mathcal{M}_d(\R)
\]
is known, and a general strategy for determining \(\ext B_{\mathcal{L}(F)}\) is to compute the \(H\)-representation of \(\mathcal{P}\). 
\end{itemize}

The following formula for \(\lambda(E,F)\) is essentially known (see \cite[Lemma 1]{konig1983finite}). 
%\cite[Lemma 2.1]{KONIG1994253} or \cite[Lemma 3.12]{MR538861}).
For the convenience of the reader, we give an elementary and detailed proof of it at the end of the subsection. 

\begin{proposition}\label{prop:rel-pro-via-trace-duality}
Let \(F=(\R^d, \norm{\cdot})\) be a polyhedral Banach space and \(E\subset F\) a linear subspace. Then
\begin{equation}\label{eq:toShoww}
\lambda(E, F) = \max\big\{ \Tr(AP): A\in\mathcal{M}_{d}(\mathbb{R}),\, \nu_1(A)=1 \textrm{ and } AP=PAP \,\big\},
\end{equation}
where \(P\in \mathcal{M}_{d}(\R) \) is the transformation matrix of the orthogonal projection from \(\R^d\) onto \(E\subset \R^d\). 
\end{proposition}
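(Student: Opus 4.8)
The plan is to realize \(\lambda(E,F)\) as the distance from the orthogonal projection \(P\) to a suitable linear subspace of \(\mathcal{L}(F)\), and then to dualize that distance using the trace pairing between \(\mathcal{L}(F)\) and \((\mathcal{M}_d(\R), \nu_1)\). First I would record the matrix description of the projections onto \(E\). A map \(Q\in \mathcal{M}_d(\R)\) is a linear projection of \(F\) onto \(E\) exactly when its range lies in \(E\) and it restricts to the identity on \(E\); in terms of the orthogonal projection \(P\) these two conditions read \(PQ=Q\) and \(QP=P\). Thus the set of all projections onto \(E\) is the affine set \(P+V\), where \(V\coloneqq\{W\in \mathcal{M}_d(\R) : PW=W,\ WP=0\}\), and by definition
\[
\lambda(E,F)=\inf_{W\in V}\norm{P+W}_{\mathcal{L}(F)}=\mathrm{dist}\bigl(P,V\bigr),
\]
the distance being taken in the operator norm of \(\mathcal{L}(F)\) (the last equality uses that \(V\) is a linear subspace, so \(\inf_{W\in V}\norm{P+W}=\inf_{W\in V}\norm{P-W}\)).

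Next I would invoke the standard Hahn--Banach duality for the distance from a point to a subspace in a finite-dimensional normed space \(Z=(\mathcal{M}_d(\R), \norm{\cdot}_{\mathcal{L}(F)})\):
\[
\mathrm{dist}(P,V)=\max\bigl\{\Tr(AP) : A\in V^{\perp},\ \nu_1(A)\leq 1\bigr\},
\]
where \(V^{\perp}\coloneqq\{A : \Tr(AW)=0 \text{ for all } W\in V\}\) is the annihilator and the maximum is attained since the feasible set is closed and bounded, hence compact. Here I use the identification of the dual of \((\mathcal{M}_d(\R), \norm{\cdot}_{\mathcal{L}(F)})\) with \((\mathcal{M}_d(\R), \nu_1)\) under the pairing \(\langle A, X\rangle=\Tr(AX)\), exactly as recalled at the beginning of this section.

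It then remains to compute \(V^{\perp}\) and to tidy up the optimization. Writing every matrix in block form with respect to the orthogonal splitting \(\R^d=E\oplus E^{\perp}\), so that \(P=\left(\begin{smallmatrix} I & 0 \\ 0 & 0\end{smallmatrix}\right)\), one checks immediately that \(V\) consists precisely of the matrices whose only nonzero block is the \(E^{\perp}\to E\) block. A one-line trace computation then shows that \(\Tr(AW)=0\) for all such \(W\) if and only if the \(E\to E^{\perp}\) block of \(A\) vanishes, i.e.\ \(A(E)\subseteq E\); in matrix terms this is exactly the condition \(AP=PAP\). Hence \(V^{\perp}=\{A : AP=PAP\}\), and the displayed maximum becomes \(\max\{\Tr(AP) : AP=PAP,\ \nu_1(A)\leq 1\}\). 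Finally I would promote the constraint \(\nu_1(A)\leq 1\) to \(\nu_1(A)=1\): assuming \(E\neq\{0\}\) (the degenerate case being trivial), one has \(\lambda(E,F)\geq 1>0\), so any maximizer \(A^{\ast}\) satisfies \(\Tr(A^{\ast}P)>0\) and thus \(\nu_1(A^{\ast})>0\); since the constraint \(AP=PAP\) is homogeneous, rescaling \(A^{\ast}\) to \(A^{\ast}/\nu_1(A^{\ast})\) would strictly increase the objective unless \(\nu_1(A^{\ast})=1\). This gives exactly \eqref{eq:toShoww}.

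The main obstacle is the matrix bookkeeping: correctly translating the statement ``\(Q\) is a projection onto \(E\)'' into the pair of equations \(PQ=Q\), \(QP=P\), and then evaluating the annihilator \(V^{\perp}\) so that the slightly unusual constraint \(AP=PAP\) emerges naturally. Everything else is a routine application of finite-dimensional duality combined with the trace description of \(\nu_1\) already supplied.
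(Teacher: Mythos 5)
Your proof is correct, and each step checks out: the characterization of the projections onto \(E\) as the affine set \(P+V\) with \(V=\{W : PW=W,\ WP=0\}\) (this \(V\) is exactly the space \(H=\spann\{u_r^{\phantom{T}}v_s^T\}\) used in the paper), the identity \(\lambda(E,F)=\mathrm{dist}(P,V)\), the computation \(V^{\perp}=\{A : AP=PAP\}\), and the final rescaling from \(\nu_1(A)\leq 1\) to \(\nu_1(A)=1\) are all sound. Your route is organized genuinely differently from the paper's, although both rest on the same two pillars, namely Hahn--Banach and the trace duality \(\mathcal{L}(F)^\ast\cong(\mathcal{M}_d(\R),\nu_1)\). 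You compress everything into the classical annihilator formula for the distance from a point to a subspace, so both inequalities of \eqref{eq:toShoww} fall out of one standard statement, and the only problem-specific work is identifying \(V^{\perp}\) -- which is the same key computation as the paper's \eqref{eq:A-vanishes-on-H}. The paper instead proves the two inequalities separately: the inequality ``\(\geq\)'' via the chain of trace identities \(\Tr(AP)=\Tr(AQ)\leq\nu_1(A)\norm{Q}_{\mathcal{L}(F)}\), and the inequality ``\(\leq\)'' via an explicit construction involving the subspaces \(\mathrm{Proj}\), \(\mathrm{Aux}\), \(W_0\), \(W_1\), a dimension count producing a nonzero \(A\in\mathrm{Proj}^\ast\cap W_0\), norm attainment on \(\mathrm{Proj}\), a Hahn--Banach extension, and a renormalization. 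Your version buys brevity and transparency, and it makes visible that polyhedrality of \(F\) is never used in this proposition (only finite-dimensionality). The paper's hands-on version, while longer, in effect re-proves the distance duality in situ and yields a by-product your argument does not: the element \(Q\in P+H\) it constructs satisfies \(\norm{Q}_{\mathcal{L}(F)}=\Tr(A_0P)=\lambda(E,F)\), so one also obtains that the infimum defining \(\lambda(E,F)\) is attained by a minimal projection, together with the dual pair \((A_0,Q)\) witnessing the equality.
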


Now, Lemma~\ref{lem:lin-prog} follows from the proposition above and the strong duality theorem of linear programming.

\begin{proof}[Proof of Lemma~\ref{lem:lin-prog}]
Notice that \(B\in \mathcal{M}_d(\R)\) satisfies \(BP=PBP\) if and only if 
\[
B=\sum_{i=1}^p x_i B_i
\]
for some real numbers \(x_1, \dots, x_p\in \R\). For any such matrix \(B\) we have
\[
\Tr(BP)=\langle b, x\rangle_{\R^p}=b^Tx,
\]
where \(x\coloneqq (x_1, \dots, x_p)\). Further, notice that \(\nu_1(B)\leq 1\) if and only if \(A^T x\leq j\). Hence, using Proposition~\ref{prop:rel-pro-via-trace-duality}, we find that the optimal value of the linear programming problem
\begin{align*}
\text{maximize} \quad
b^T x& \\
\text{subject to} \quad
A^Tx &\leq j \nonumber
\end{align*}
is equal to \(\lambda(E, F)\). Now,  the conclusion of the lemma follows directly from the strong duality theorem of linear programming (see, for example, \cite[p. 85]{matousek2007understanding}). 
\end{proof}

\begin{proof}[Proof of Proposition~\ref{prop:rel-pro-via-trace-duality}]
Let \(A\in\mathcal{M}_{d}(\mathbb{R})\) be such that \(AP=PAP\) and let \(Q\in  \mathcal{M}_{d}(\mathbb{R})\) be a projection matrix with range \(E\).
We compute
\begin{equation*}
\begin{split}
\Tr(AP)&=\Tr(AQP)=\Tr(PAQ)\\ 
&=\Tr(PAPQ)=\Tr(APQ)=\Tr(AQ)\leq \nu_1(A)\,\norm{Q}_{\mathcal{L}(F)}.
\end{split}
\end{equation*} 
Hence, the left hand side of \eqref{eq:toShoww} is greater than or equal to the right hand side. In the following, we establish the other inequality. 

Clearly, \eqref{eq:toShoww} is true if \(E=\{0\}\) or \(E=F\), therefore we may suppose that \(\dim(E)\neq 0, d\). Let \(u_r\in\R^d\), \(r=1, \dots, n\), be an orthonormal basis of \(E\subset \R^d\) and \(v_s\in\R^d\), \(s=1, \dots, d-n\), an orthonormal basis of its orthogonal complement. 
Every matrix contained in the set \(P+H\), where \(H\) is the linear span of \(\big\{u_{r^{\phantom{1}}}\hspace{-0.35em}v_{\hspace{-0.1em}s}^T\big\}\), is a projection matrix with range \(E\). Let \(\textrm{Proj}\subset \mathcal{M}_{d}(\mathbb{R}) \) denote the linear span of \(P+H\) and fix a decomposition \(\mathcal{M}_{d}(\mathbb{R})=\textrm{Proj} \oplus \textrm{Aux}\), where \(\textrm{Aux}\subset \mathcal{M}_{d}(\mathbb{R}) \) is a linear subspace.
%\(\textrm{Proj}+\textrm{Aux}=\mathcal{M}_{d}(\mathbb{R}) \) and \(\textrm{Proj}\cap \textrm{Aux}=\{0\}\). 
We set 
\begin{equation*}
\textrm{Proj}^\ast\coloneqq \big\{ A\in \mathcal{M}_{d}(\mathbb{R}) : \Tr(AX)=0 \textrm{ for all } X\in \textrm{Aux} \big\}.
\end{equation*}
%It is well-known that \(\textrm{Proj}^\ast\) is isomorphic to the dual space of \(\textrm{Proj}\). 
We claim that there exists \(A\in\textrm{Proj}^\ast\), \(A \neq 0\), such that \(AP=PAP\). 

The linear map  \(L\colon \mathcal{M}_{d}(\mathbb{R}) \to \mathcal{M}_{d}(\mathbb{R})\) defined by \(X\mapsto XP-PXP\) satisfies
\(L|_{W_0}=0\) and \(L|_{W_1}=\textrm{id}|_{W_1}\), 
where \( W_0\subset \mathcal{M}_{d}(\mathbb{R})\) is the linear span of the matrices 
\(
\big\{u_{r^{\phantom{1}}}\hspace{-0.25em}u_{r'}^T\big\}, \big\{v_{\hspace{-0.1em}{s^{\phantom{1}}}}\hspace{-0.25em}v_{\hspace{-0.1em}s'}^T\big\}  \textrm{ and } \big\{u_{r^{\phantom{1}}}\hspace{-0.35em}v_{\hspace{-0.1em}s}^T\big\}
\)
and \( W_1\subset \mathcal{M}_{d}(\mathbb{R})\) the linear span of the matrices \(\{v_{\hspace{-0.1em}s^{\phantom{1}}}\hspace{-0.25em}u_{r}^T\}\). Hence, the kernel of \(L\) has dimension \(d^2-n(d-n)\), and since the dimension of \(\textrm{Proj}^\ast \subset \mathcal{M}_{d}(\mathbb{R})\) is equal to \(n(d-n)+1\), the matrix \(A\) with the desired properties exists. 
%We remark that it is also not difficult to construct such a matrix \(A\) directly. 

Now, fix \(A\in\textrm{Proj}^\ast\cap W_0\), \(A\neq 0\),  and define the linear map \(f_A\colon \textrm{Proj}\to \R\) via \(X\mapsto \Tr(AX)\). As \(A\in\textrm{Proj}^\ast\) and \(A\neq 0\), it follows that \(f_A\neq 0\).
In the following, we consider \(\textrm{Proj}\) as a linear subspace of \(\mathcal{L}(F)\) equipped with the induced norm. Clearly, there exists a matrix \(B\in\textrm{Proj}\) such that \(\norm{B}=1\) and \(\norm{f_A}=f_A(B)\). In particular, \(f_A(B)\neq 0\).

As \(AP=PAP\), it follows that \(Ax\in E\) for all \(x\in E\), which is in fact equivalent to \(AP=PAP\). Consequently,
\begin{equation}\label{eq:A-vanishes-on-H}
\Tr(A \hspace{0.1em}u_{r^{\phantom{1}}}\hspace{-0.35em}v_{\hspace{-0.1em}s}^T)=\langle A u_r, v_{\hspace{-0.1em}s}\rangle_{\R^d}=0
\end{equation}
for all \(r=1, \dots, n\) and all \(s=1, \dots, d-n\). This implies that \(\Tr(AX)=0\) for all \(X\in H\).
Hence, as \(f_A(B)\neq 0\), there is a projection matrix \(Q\in P+H\) and a scalar \(\alpha\neq 0\) such that \(B=\alpha Q\). Notice that \(\norm{f_A}=\alpha \Tr(AQ)\) and \(1=\norm{B}=\abs{\alpha}\cdot \norm{Q}\). Therefore,
\[
\norm{Q}=\Tr\Bigl( \tfrac{\sgn(\alpha) }{\norm{f_A}}\, A Q\Bigr),
\]
where \(\sgn(\alpha)\) denotes the sign of \(\alpha\). 

By the Hahn-Banach theorem there is a linear functional \(\bar{f}_A\colon \mathcal{L}(F)\to \R\) extending \(f_A\) such that \(\norm{\bar{f}_A}=\norm{f_A}\). Hence, by trace duality there exists \(\bar{A}\in \mathcal{M}_{d}(\mathbb{R})\) such that \(\bar{f}_A(X)=\Tr(\bar{A}X)\) for all \(X\in \mathcal{M}_{d}(\mathbb{R})\). Clearly, \(\nu_1(\bar{A})=\norm{\bar{f}_A}\) and \(\Tr(\bar{A}X)=\Tr(AX)\) for all \(X\in \textrm{Proj}\).
Using \eqref{eq:A-vanishes-on-H}, we find that \(\langle \bar{A} u_r, v_s\rangle_{\R^d}=0\); this implies that \(\bar{A}x\in E\) for all \(x\in E\) and thus \(\bar{A}P=P\bar{A} P\).

Letting \(A_0\coloneqq \bigl(\sgn(\alpha) /\nu_1(\bar{A})\bigr)\, \bar{A}\), it follows by the above that
\(\norm{Q}=\Tr(A_0 Q)\) and \(A_0P=PA_0P\). Hence, using that \(\Tr(A_0 P)=\Tr(A_0 Q)\) and \(\norm{Q}\geq \lambda(E, F)\), we conclude \(\Tr(A_0 P)\geq \lambda(E, F)\), as desired. This completes the proof.
\end{proof}

\printbibliography

\end{document}